\newtheorem{theorem}{Theorem}[section]
\newtheorem{remark}[theorem]{Remark}
\newtheorem{definition}[theorem]{Definition}
\newtheorem{proposition}[theorem]{Proposition}
\newtheorem{lemma}[theorem]{Lemma}
\newtheorem{assumption}{Assumption}
\newcounter{subassumption}[assumption]
\renewcommand{\p@subassumption}{\theasu}%
\newcommand\cB{\mathcal B}
\newcommand\cG{\mathcal G}
\newcommand\cI{\mathcal I}
\newcommand\cP{\mathcal P}
\def\balpha{\bar \alpha}
\def\bX{\bar X}
\newcommand\EE {\mathbb E}
\newcommand\FF {\mathbb F}
\newcommand\NN {\mathbb N}
\newcommand\RR {\mathbb R}
\newcommand\PP {\mathbb P}
\newcommand\WW {\mathbb W}
\newcommand\cC {\mathcal C}
\newcommand\cD {\mathcal D}
\newcommand\cF {\mathcal F}
\newcommand\cL {\mathcal L}
\newcommand{\cJ}{\mathcal{J}}
\newcommand{\cW}{\mathcal{W}}
\newcommand{\cM}{\mathcal{M}}
\newcommand{\bH}{\mathbb{H}}
\def\h{\hat}
\def\t{\tilde}
\def\o{\bar}
\def\u{\underline}
\def\w{\widehat}
\newtheoremstyle{nodot}  %
  {\topsep}  %
  {\topsep}  %
  {\itshape} %
  {}         %
  {\bfseries} %
  {}         %
  { }        %
  {\thmname{#1} \thmnumber{#2} \thmnote{(#3)}} %
\theoremstyle{nodot}
\newenvironment{assumptionp}[1]{
  
  \assumptionalt
}{\endassumptionalt}
\newcounter{counterExtraAssumptions}
\title[Probabilistic Analysis of GMFC]{Probabilistic Analysis of Graphon Mean Field Control}
\author{Zhongyuan Cao}
\author{Mathieu Lauri\`ere}\thanks{NYU-ECNU Institute of Mathematical Sciences, NYU Shanghai, Shanghai, 200126, People’s Republic of China, email: zc3151@nyu.edu, mathieu.lauriere@nyu.edu}
\date{}
\begin{document}

\begin{abstract}
Motivated by recent interest in graphon mean field games and their applications, this paper provides a comprehensive probabilistic analysis of graphon mean field control (GMFC) problems, where the controlled dynamics are governed by a graphon mean field stochastic differential equation with heterogeneous mean field interactions. We formulate the GMFC problem with general graphon mean field dependence and establish the existence and uniqueness of the associated graphon mean field forward-backward stochastic differential equations (FBSDEs). We then derive a version of the Pontryagin stochastic maximum principle tailored to GMFC problems. Furthermore, we analyze the solvability of the GMFC problem for linear dynamics and study the continuity and stability of the graphon mean field FBSDEs under the optimal control profile. Finally, we show that the solution to the GMFC problem provides an approximately optimal solution for large systems with heterogeneous mean field interactions, based on a propagation of chaos result.
\end{abstract}

\maketitle

\textbf{Key words.}  Graphon mean field control; stochastic optimal control; non-exchangeable systems; Pontryagin maximum principle; propagation of chaos

\textbf{AMS subject classification.}  93E20, 49N80, 91A06, 91A15, 91A43%

\tableofcontents

\section{Introduction}
\label{se:intro}
The aim of this paper is to study control problems for large particle systems with heterogeneous interactions and the corresponding asymptotic problems in the infinite-population limit. The study of mean-field systems with homogeneous interactions dates back to the work of Boltzmann, Vlasov, McKean, and others (see, e.g.,~\cite{mckean1967propagation, kac1959probability}). Since then, both forward and backward stochastic differential equations (SDEs and BSDEs) of McKean–Vlasov (MKV) type, also known as mean-field type, have been extensively studied (see~\cite{buckdahn2007mean,buckdahn2009mean,buckdahn2017mean} and the references therein). Building on this, the theory of mean field games (MFGs), introduced by Lasry and Lions~\cite{lasry2007mean} and Huang, Caines, and Malham\'e~\cite{huang2006large, huang2007large}, has gained significant attention; see~\cite{carmonabook1,carmonabook2} and the references therein. 
Beyond the study of MFGs, mean field control (MFC) problems have also attracted significant interest, see, e.g., \cite{carmona2013control,bensoussan2013mean}. In MFC problems, agents cooperate to minimize a common cost. They collectively choose a control that influences the population distribution (i.e., the mean field term). 
For more details on the differences between MFG and MFC, see e.g.~\cite{carmona2019price,cardalia2019efficiency,carmona2023nash}.

In this paper, we focus on MFC-type problems. %
A deterministic approach based on partial differential equations (PDEs) has been developed (see e.g. \cite{bensoussan2013mean,achdou2015system,pham2018bellman}), and two stochastic approaches have been considered: Under suitable differentiability and convexity assumptions, Carmona and Delarue \cite{carmona2015forward} provide a systematic study of the control problem using the theory of forward-backward stochastic differential equations (FBSDEs).  Alternatively, \cite{lackercontrolweak} studies the MFC problem through a weak formulation using a controlled martingale problem. Here we will focus on the FBSDE approach. 

Most of the MFC literature focuses on homogeneous populations but, motivated by applications, several extensions have been studied in recent years, see~\cite{lauriere2025extensions} for a survey. In particular, mean-field systems on large networks and mean-field games with heterogeneous interactions have been studied for different random graph models, including Erdős–Rényi graphs \cite{delarue2017mean} and heterogeneous random graphs \cite{oliveira2019interacting}. The concept of graphons provides a natural continuum limit for large dense graphs, and has proven useful for analyzing heterogeneous interactions in infinite populations~\cite{lovasz2012, borgs2008convergent, borgs2012convergent}. %
Static graphon games were introduced in~\cite{parise2023graphon} and later extended to the stochastic setting in~\cite{mlstatic}. The framework of graphon mean field games (GMFGs) and the associated GMFG equations were formulated in \cite{caines2022graphon,caines2021graphon}, where the authors also studied the $\epsilon$-Nash property. Graphon mean field FBSDEs, arising from a form of Pontryagin's maximum principle, were analyzed in~\cite{graphon2022lauriere} for a class of linear-quadratic problems. More generally, the well-posedness and propagation of chaos for graphon mean field SDEs and coupled graphon mean field FBSDEs have been studied in \cite{erhangmfs} and \cite{wu2022}, respectively. Graphon mean field BSDEs with jumps were considered in \cite{amicaosu2022graphonbsde}. A label-state formulation of GMFGs, adopting a weak approach, was explored in \cite{lacker2021soret}. In \cite{cao2023nash}, the authors analyzed a general GMFG model with jumps, where both drift and volatility are controlled, and the cost function exhibits a nonlinear mean field dependence. Graphon mean field systems with nonlinear interactions and the associated propagation of chaos property were studied in \cite{phamnonlinear}. 
A financial application to portfolio management has been studied in~\cite{tangpi2024optimal}, and solved numerically using deep learning in~\cite{lauriere2025deep}. \cite{tangpi2024graphon} studied convergence of a system of interacting particles to a limit with interactions through a $W$-random graph. 
Furthermore, the study of mean field limits for non-exchangeable systems has attracted considerable interest in the PDE community; see, e.g., \cite{jabin2024gmfs,ayi2024gmfs}.
GMFGs in finite state space have also been investigated, both in continuous time~\cite{aurell2022finite} and in discrete time~\cite{cui2021graphon,vasal2021sequential}. In general, these problems cannot be reduced to standard MFGs due to the special role played by the label, which requires an ad-hoc treatment of measurability issues.

\noindent\textbf{Related works and novelty.}  To the best of our knowledge, MFC problems in non-exchangeable systems with heterogeneous interactions have received little attention and the general probabilistic framework remained to be developed. The works \cite{gao2018graphon,gao2019optimal,liang2021finite,xu2024social,pham2025lqgmfc,dunyak2024quadratic} studied graphon mean field control (GMFC) problems in the linear-quadratic setting. Through dynamic programming on the flow of probability measure families, \cite{pham2024gmfc} approached GMFC problems via PDE methods. %
Simultaneously and independently of our work, \cite{pham2025maximum} studied a version of the maximum principle for the optimal control of non-exchangeable systems. In discrete time and finite state spaces, GMFC problems were studied in \cite{hu2023graphon}. 
The present paper focuses on the continuous-time and continuous-space setting and is the first to systematically study the problem from a probabilistic perspective. We consider heterogeneous dynamics that depend on agent labels, with possibly controlled volatility and interactions that depend on the full mean field distribution, beyond just the first moment.

\noindent\textbf{Main contributions.} The main contributions of this work are as follows:
\begin{enumerate}[leftmargin=0.5cm,topsep=0pt,itemsep=0pt,partopsep=0pt, parsep=0pt]
    \item For any fixed control profile, we introduce a graphon mean field FBSDE in which the backward variable plays the role of an adjoint state,  and we establish its \textbf{well-posedness} (Theorem~\ref{thm:exfb}).  
    \item We prove \textbf{necessary} (Theorem~\ref{thm:nece}, Proposition~\ref{prop:nece-weak}) and \textbf{sufficient} (Theorem~\ref{th:suffi}) versions of \textbf{Pontryagin stochastic maximum principle}.
    \item We establish the \textbf{solvability} (Theorems~\ref{th:exo} and~\ref{th:exo2}), \textbf{continuity} (Theorem~\ref{thm:conti}) and \textbf{stability} (Theorem~\ref{thm:sta}) of the \textbf{FBSDE system at optimum}. %
     \item We prove a \textbf{propagation of chaos property} (Theorem~\ref{thm:poc}), and show the GMFC optimal control is \textbf{approximately optimal} for the corresponding finite-population problem (Theorem~\ref{th:limit cost}).
 \end{enumerate}

On the technical side, proving these results requires treating the label carefully and in particular the question of measurability of the processes involved. In most cases, we do not require measurability of the processes themselves but only a form of weak measurability of their laws. Furthermore, a significant difference between graphon \emph{games}, whose theory has been more studied, and  graphon \emph{control} problems is that the optimality conditions of the latter involve differentiating with respect to a family of measures, which also needs to be treated carefully.

 \noindent\textbf{Outline of the paper.} We begin in Section~\ref{sec:setting} by specifying the probabilistic setting and formulating the GMFC problem, including essential preliminary definitions and notations. In Section~\ref{sec:eu}, we introduce the adjoint equations, leading to the FBSDE system for a fixed control profile. We establish existence and uniqueness results for the controlled graphon mean field SDEs and the associated FBSDE system. 
Section~\ref{sec:pontryagin} is dedicated to deriving a GMFC version of the Pontryagin stochastic maximum principle, providing both necessary and sufficient conditions for optimality. In Section~\ref{sec:fbsde}, we analyze the solvability of the FBSDE system arising from the GMFC Pontryagin maximum principle and establish two key properties: continuity and stability.
Finally, in Section~\ref{sec:chaos}, we study the controlled heterogeneously interacting particle system and its connection to the GMFC problem. Building on the continuity and stability results of Section~\ref{sec:fbsde}, we demonstrate that the optimal control profile of the GMFC problem can be used to construct an approximately optimal control for large-population systems and show that the optimal cost of the large-population system converges to the optimal cost of the GMFC problem.

\section{Probabilistic Set-Up and Notations}
\label{sec:setting}

\subsection{General notations}

Let  $I:=[0,1]$ be the label set and let $\mathcal{B}(I)$ be the Borel $\sigma$-algebra on $I$. Let $\Upsilon$ be the open unit interval $(0,1)$. We endow it with the Lebesgue measure denoted by $\iota$. Let $T \in (0,+\infty)$ be a finite time horizon. Let $d$ and $m$ be two integers. We shall work on a complete probability space $( \Omega,\cF,\PP)$, on which we define a family of independent $m$-dimensional Brownian motions $W^u, u \in I$ and a family of independent and identically distributed (i.i.d.) random variables $\Lambda^u, u\in I$ having uniform distribution on $\Upsilon$. The two families $(W^u)_{u\in I}, (\Lambda^u)_{u\in I}$ are assumed to be independent.
Let $\cF^u_0=\sigma(\Lambda^u)$ be the $\sigma$-field generated by $\Lambda^u$, let $\cF_0=\bigvee_{u\in I} \cF_0^u$, and let $\mathbb{F}=(\cF)_{0\leq t\leq T}$ be the filtration generated by $(W^u)_{u\in I}$, augmented by the set $\mathcal{N}_{\PP}$ of $\PP$-null events and $\cF_0$. For each $u\in I$, we denote by $\mathbb{F}^u$ the natural filtration of $W^u$ augmented by $\mathcal{N}_{\PP}$ and $\cF^u_0$. For a random variable $X$, we denote by $\mathcal{L}(X)$ its law.

We shall denote by $|A|$ the Euclidean norm for a vector $A$, and by $|A|=\sqrt{\text{trace}(A A^\top)}$ the Frobenius norm of a matrix A, where $\text{trace}(\cdot)$ is
the trace operator. For two vectors $x$ and $y$ of the same dimension, $x \cdot y$ denotes  their inner product. We denote by $\cC^d := \mathcal{C}([0,T],\RR^d)$ the set of continuous functions from $[0,T]$ to $\RR^d$, by $\cD^d = \mathcal{D}([0,T],\RR^d)$ the set of measurable $\RR^d$-valued functions, and by $\cP_p(\RR^d)$  the set of probability measures on $\RR^d$ with finite $p$ moment.  
For a $L^p$ function, we denote by $\|\cdot\|_{p}$ its $L^p$ norm. 
For two probability measures $\mu,\mu'\in \cP_p(E)$, where $(E,d_E)$ is a metric space, $\cW_p(\mu,\mu')$ denotes the $p$-Wasserstein distance between $\mu$ and $\mu'$.
We denote by $\mathbf{S}^{2}(\FF^u)$ (resp. $\mathbf{H}^{2}(\FF^u)$) the space of $\mathbb{F}^u$-progressively measurable, $\RR^d$-valued processes $Y$ (resp. the set of $\mathbb{F}^u$-predictable and $\mathbb{R}^{d\times m}$-valued processes $Z$) such that  
$$
	\|Y\|_{\mathbf{S}^2}\coloneqq\Big(\mathbb{E}[\sup_{t\in[0,T]} |Y_t|^2]\Big)^{\frac{1}{2}} <  \infty,
	\quad 
	\Big(\hbox{resp. } \|Z\|_{\mathbf{H}^2}\coloneqq\bigg(\mathbb{E}\Big[\int_0^T \|Z_t\|^2 dt\Big]\bigg)^{\frac{1}{2}} < \infty\Big). 
$$
We will denote by $\mathbb{W}$ the Wiener measure on $\cC^d$.

\subsection{\texorpdfstring{$L$}{TEXT}-differentiability of functions of measures}\label{sec:lderiva}

The notion of  
$L$-differentiability of functions of probability measures was introduced by Lions in his lectures at the {\it Coll\`ege de France}. We refer to \cite[Chapter 5]{carmonabook1} for more details. Let $(\t \Omega,\t \cF,\t \PP)$ be a probability space and $\t H$ be the lifting of the function $\cP_2(\RR^d) \ni \mu\mapsto H(\mu)\in\RR^d$, in the sense that 
$
    \t H(\t X)=H(\mu),
$
where $\t X\in L^2(\t\Omega;\RR^d)$ and $\t\PP\circ \t X^{-1}=\mu$. 
We say that $H$ is $L$-differentiable at $\mu\in \cP_{2}(\RR^d)$ if there exists a random variable $\t X\in L^2(\t \Omega;\RR^d)$ such that the lifted function $\t H$ is Fr\'echet differentiable at $\mu$. By the self-duality of $L^2$ space, the Fr\'echet derivative $[D\t H](\t X)$ can be viewed as an element $D\t H(\t X)$ of $L^2(\t \Omega; \RR^d)$. We denote by $\partial_{\mu}H(\mu_0): \RR^d\ni x\mapsto \partial_\mu H(\mu_0)(x)\in\RR^d$ the derivative of $H$ at $\mu_0$. By definition, 
\begin{equation}\label{eq:l}
	H(\mu)=H(\mu_0)+D\t H(\t X_0)\cdot(\t X-\t X_0)+o(\|\t X-\t X_0\|_2).
\end{equation}
Then $\partial_{\mu} H(\mu_0)(X_0)$ is defined by $\partial_{\mu} H(\mu_0)(X_0)=D\tilde{H}(\tilde{X}_0)$. 
We say a function $H$ on $\cP_{2}(\RR^d)$ is convex if for any $\mu_1$ and $\mu_2$ in $\cP_{2}(\RR^d)$, we have 
$$
	H(\mu_1)-H(\mu_2)-\t \EE[\partial_\mu H(\mu_2)(\t X_2)\cdot(\t X_1-\t X_2)]\geq 0.
$$
The above definitions extend naturally %
to multivariate functions.

\subsection{Graphons}

Let us first define the graphons we will study in this paper. The concept of graphon is defined in \cite{lovasz2012} as a measurable symmetric function from $I\times I\to [0,1]$. In this paper, we study a more general class of graphons: a graphon is defined as a measurable, symmetric and bounded function $G: I \times I \rightarrow \RR_0^+$, where $\RR^+_0$ is the positive half axis containing the origin. Under suitable assumptions, graphons can be regarded as the limits of adjacency matrices of weighted graphs, when the number of vertices goes to infinity. For more details about graphons, including the notions of cut norm and operator norm, we refer to~\cite{lovasz2012}.
Note that by the above definition of graphon, every graphon $G$ is in $L^p(I\times I,\RR)$ and has a finite norm $\|G\|_p$, for any $p\geq 1$.

In the sequel, we make the following standing assumption on every graphon we consider. It is satisfied for instance for graphons that are bounded away from $0$.
\begin{assumption}\label{ass:graphon}
Recall that we consider only bounded graphons. Moreover, we suppose: $G^{-1}_\infty:=\sup_{u\in I}(\|G(u,\cdot)\|_1)^{-1}= \sup_{u\in I}\frac{1}{\int_IG(u,v)dv}< \infty$.
\end{assumption}

\subsection{Graphon mean field control problems}
\label{sec:gmfc-def}

Differently from the classical mean field case, the interaction between one agent and the rest of the population does not depend on the state distribution of the whole population, but on a \emph{weighted} average of the neighbors' state distributions. %

When deriving optimality conditions, we will need to differentiate with respect to the aggregate distribution. To this end, we will assume that the graphon mean field control problem to be defined below depends only on a normalized version of the graphon. This renormalization is used to guarantee that the neighborhood mean field is a probability measure so that we can use the Wasserstein metric and the $L$-derivative. Note that the renormalization, already used in \cite{phamnonlinear,pham2024gmfc}, does not change the interaction structure of the underlying system. 
Given a graphon $G$, let $\t G$ denote its normalized version, defined as: 
\begin{equation}\label{eq:kappa}
    \tilde{G}(u,v)
    = \|G(u,\cdot)\|_1^{-1}G(u,v)
    = \frac{ G(u,v)}{\int_I G(u,v)dv}, \qquad u,v \in I.
\end{equation}

Given $\mu=(\mu^u)_{u\in I}\in \cP(\RR^d)^I$ satisfying for any $B\in\mathcal{B}(\RR^d)$, $I\ni u\mapsto \mu^u(B)$ is $I$-Lebesgue measurable, let $\cG\mu:I \to\cP(\mathbb{R}^d)$ be the measure-valued function defined as follows: 
\begin{equation}\label{Lab}
	[\cG\mu]^u(dx)=\t G\mu[u](dx):=\int_I \t G(u,v)\mu^v(dx)dv, \quad u\in I. 
\end{equation} 
For any  $\phi\in L^\infty(\RR^d; \RR)$ and $u\in I$, we denote $[\cG\mu]^u(\phi):=\int_I\int_{\RR^d}\t G(u,v)\phi(x)\mu^v(dx)dv.$

\begin{remark}\label{rq:2}
Let $\mu=(\mu^u_t)_{u\in I,t\in[0,T]}\in \cP(\cC^d)^I$ satisfying for any $t\in[0,T]$ and $B\in\mathcal{B}(\RR^d)$, $I\ni u\mapsto \mu_t^u(B)$ is $I$-Lebesgue measurable. Then $[\cG\mu_t]^u$ is a probability measure and for any $t\in[0,T]$ and $B\in\mathcal{B}(\RR^d)$, $I\ni u\mapsto [\cG\mu_t]^u(B)$ is $I$-Lebesgue measurable, see \cite[Lemma 3.1]{phamnonlinear} for more details. In addition,  for all $u\in I$, if $[0,T]\ni t\mapsto \mu^u_t\in\cP(\RR^d)$ is measurable, then we also have $[0,T]\ni t\mapsto [\cG\mu_t]^u\in\cP(\RR^d)$ is measurable. 
\end{remark}

\begin{remark}\label{rq:gudefine}
Note that since the underlying state space is $\RR^d$, which is a Polish space, then by \cite[Proposition 2.1]{tangpi2024graphon}, \eqref{Lab} is well defined whenever $u\mapsto \mu^u$ is measurable in the topology of weak convergence. 
\end{remark}

Let the action space $A$ be an open subset of a separable Banach space; let $|\cdot|$ be the associated norm.
Let $\mathbb{H}^p_{T,u}(A)$ be the set of $A$-valued $\mathbb{F}^u$-progressively measurable processes $\phi$ satisfying 
$$
	\|\phi\|_{\mathbb{H}^p}:=\Bigl(\EE\Bigl[\int_0^T |\phi_t|^pdt\Bigr]\Bigr)^{1/p}<\infty.
$$
It represents the set of all admissible control processes for label $u\in I$. When $p=2$, we simply denote the norm by $\|\cdot\|_{\bH}$.

We consider control profiles of the form: for each $u\in I$, 
$\alpha^u_t=a^u(t,W^u_{\cdot\wedge t},\Lambda^u),$ $t\in [0,T],$
where $(a^u)_{u\in I}$ is a family of Borel measurable functions 
$a^u:[0,T]\times \cC^d\times \Upsilon\to A.$
We recall that $\mathbb{W}$ denotes the Wiener measure on $\cC^d$.

\begin{remark}\label{rq:a}
    Note that for a control process $\alpha^u$, the corresponding function $a^u$ is not necessarily unique on $\cC^d$. But we will only evaluate such functions on trajectories realized by Brownian motions $W^u$, and $a^u$ is unique $\mathbb{W}$-a.s. 
\end{remark}
Note that each process $\alpha^u$ of the above form is $\FF^u$-progressively measurable. 
 Admissible control profiles are defined as follows.
\begin{definition}\label{def:admisscontrol}
A control profile $(\alpha^u)_{u\in I}$ is admissible if it satisfies the following: 
For each $u\in I$, $\alpha^u\in \mathbb{H}^2_{T,u}(A)$, 
$ \sup_{u\in I} \|\alpha^u\|_{\bH}<\infty$, and
for any $t\in[0,T]$, the mapping 
$I\ni u\mapsto a^u(t,\cdot,\cdot) \in L^2((\cC^d\times \Upsilon,\mathbb{W}\otimes\iota);A)$
is measurable.
The set of admissible control profiles is denoted by $\cM \mathbb{H}^2_T(A)$.
\end{definition}
Note that we do not require the measurability in $u$ for the control function $a$, but the third condition in the above definition implies in particular that the mapping $u\mapsto \cL(\alpha^u_t)$ is measurable in the topology of weak convergence for every $t \in [0,T]$. The class of controls considered here is a priori more general than the one considered in~\cite{pham2024gmfc}, but in our definition of admissibility, we impose a measurability condition in an $L^2$ sense.

When the context is clear, we omit the underlying space $A$ and write $\cM \mathbb{H}^2_T = \cM \mathbb{H}^2_T(A)$. Let $X^u$ be the state process of label $u$ and let $X = (X^u)_{u\in I}$ be the family of state processes. We denote by $\mu^u_t = \cL(X^u_t) \in \cP(\RR^d)$ the distribution of the state of agent $u$ at time $t$. At time $t=0$, the state is equal to $\xi^u$, which is a $\cF_0^u$-measurable $\RR^d$-valued square integrable random variable representing the initial condition for label $u\in I$. Let $\rho^u$ be a measurable function $\rho^u:\Upsilon\to\RR^d$ such that $\xi^u =\rho^u(\Lambda^u)$. The state satisfies the following controlled graphon mean field dynamics, which is an infinite system of SDEs: for $u\in I$, 
\begin{equation}
	\label{eq:gcd}
	\begin{split}
		dX^u_t= b^u(t,X^u_t,[\cG\mu_t]^u,\alpha^u_t)dt 
		+\sigma^u(t,X^u_t,[\cG\mu_t]^u,\alpha^u_t)dW^u_t, \quad X^u_0= \xi^u.
	\end{split}
\end{equation} %
For every $u \in I$, $b^u:[0,T]\times \RR^d\times \cP(\RR^d)\times A\to \RR^d$ and $\sigma^u:[0,T]\times\RR^d\times\cP(\RR^d)\times A\to \RR^{d\times m}$ are the drift and diffusion coefficients. $[\cG\mu]^u_t$ serves as the neighborhood mean field of label $u$ at time $t$. We may use a superscript $\cdot^{\alpha}$ to stress that the process depends on the control profile $\alpha$. When the context is clear, we omit this superscript to alleviate the notations. When the context is clear, we will denote $\cG^u_t=[\cG\cL(X_t)]^u$ in the rest of the paper.

\begin{definition}
	Given $\alpha\in \cM\mathbb{H}^2_T$, a solution to~\eqref{eq:gcd} is a family $X=(X^u)_{u\in I}$ satisfying \eqref{eq:gcd} for a.e $u\in I$ and such that $X^u$ is $\mathbb{F}^u$-progressively measurable for each $u\in I$, and satisfies that $u\mapsto \cL(X^u_t,\alpha^u_t)$ is measurable for any $t\in[0,T]$.
\end{definition}

The graphon mean field control (GMFC) problem we consider in the rest of the paper is to minimize over $\cM \mathbb{H}^2_T(A)$:
\begin{equation}
	\label{eq:sigob}
	J(\alpha)=\int_I\EE\left[\int_0^T f^u(t,X^u_t,\cG^u_t,\alpha^u_t)dt+g^u(X^u_T,\cG^u_T)\right]du ,
\end{equation}
where the running cost function $f^u: [0,T]\times\RR^d\times \cP(\RR^d)\times A \to \RR$ and the terminal cost function $g^u: \RR^d\times \cP(\RR^d)\to \RR$ satisfy suitable conditions given below. 
Note that by Remarks~\ref{rq:2} and ~\ref{rq:gudefine}, provided suitable joint measurability of functions $f^u,g^u$, $J(\alpha)$ is well defined for $\alpha \in \cM \mathbb{H}^2_T(A)$.

\section{Graphon Mean Field FBSDEs}
\label{sec:eu}

 In this section, we will study the graphon mean field FBSDEs  induced by the graphon mean field control problem~\eqref{eq:sigob}. We focus on the FBSDE for a fixed control, in preparation for the Pontryagin stochastic maximum principle in Section~\ref{sec:pontryagin}. 

\subsection{Hamiltonian and adjoint equation}\label{sec:3.1}

 The following assumptions will be used in this section. 

\begin{assumption}%
\label{ass:dynamic}
\begin{enumerate}[label=\normalfont{\textbf{(\ref*{ass:dynamic}\arabic{*})}}, ref=\normalfont{\textbf{(\ref*{ass:dynamic}\arabic{*})}}, topsep=2pt,itemsep=0pt,partopsep=0pt, parsep=0pt]
    \item\label{ass:dynamic-1} The laws $\mu^u_0$ of initial conditions $\xi^u$, $u\in I$ satisfy that $I\ni u\mapsto \mu_0^u\in\cP(\RR^d)$ is measurable. 

    \item\label{ass:dynamic-2} The functions $[0,T]\times \RR^d \times \cP_2(\RR^d)\times A\times I \ni (t,x,\mu,\alpha,u) \mapsto b^u(t,x, \mu, \alpha)\in \RR^d\times\RR^{d\times m}$ and $[0,T]\times \RR^d \times \cP_2(\RR^d)\times A\times I \ni (t,x,\mu,\alpha,u) \mapsto \sigma^u(t,x, \mu, \alpha)\in \RR^d\times\RR^{d\times m}$ are jointly measurable. 
    The functions $[0,T]\ni t\mapsto b^u(t,0,\delta_0,0)\in\RR^d\times \RR^{d\times m}$ and $[0,T]\ni t\mapsto \sigma^u(t,0,\delta_0,0)\in\RR^d\times \RR^{d\times m}$ are square integrable for each $u\in I$, and 
    $$\sup_{u\in I}\int_0^T\big(|b^u(t,0,\delta_0,0)|^2+|\sigma^u(t,0,\delta_0,0)|^2\big)dt<\infty.$$
    Moreover, $\sigma^u, u\in I,$ are non-degenerate.
    \item\label{ass:dynamic-3} $b^u,\sigma^u, u\in I$, are Lipschitz continuous with respect to all parameters except possibly $t$ uniformly in $u$; that is, there exists a constant $C$ such that: for each $u\in I$, for all $t\in[0,T],$ $\alpha_1,\alpha_2\in A,$ $ x_1,x_2\in \RR^d,$ $\mu_1,\mu_2\in \cP_2(\RR^d)$, 
	\begin{align*}
		|b^u(t,x_1,\mu_1,\alpha_1)&-b^u(t,x_2,\mu_2,\alpha_2)|+|\sigma^u(t,x_1,\mu_1,\alpha_1)-\sigma^u(t,x_2,\mu_2,\alpha_2)| \\
		& \leq C(|x_1-x_2|+\cW_2(\mu_1,\mu_2)+|\alpha_1-\alpha_2|).
	\end{align*}
    
    \item\label{ass:dynamic-4} There exists $\varepsilon>0$, such that $(\mu^u_0)_{u\in I}$ satisfy $\sup_{u\in I}\int_{\RR^d}|x|^{2+\varepsilon}\mu^u_0(dx)<\infty$.
    
	\item\label{ass:dynamic-5} The functions $[0,T]\times \RR^d \times \cP_2(\RR^d)\times A\times I \ni (t,x,\mu,\alpha,u) \mapsto f^u(t,x, \mu, \alpha)\in \RR$, and $\RR^d \times \cP_2(\RR^d)\times I\ni (x,\mu,u)\mapsto g^u(x,\mu)\in \RR$ are jointly measurable. For each $u\in I$, the functions $b^u,\sigma^u,f^u,g^u$ are (jointly) differentiable with respect to $x$ and continuously $L$-differentiable with respect to $\mu$. In addition, $[0,T]\times \RR^d \times \cP_2(\RR^d)\times A\times I \ni (t,x,\mu,\alpha,u) \mapsto (\partial_{x}(b^u,\sigma^u,f^u)(t,x, \mu, \alpha),\partial_xg(x,\mu))$ are jointly measurable. For any $x'\in\RR^d$, $ (t,x,\mu,\alpha,u) \mapsto \partial_{\mu}(b^u,\sigma^u,f^u)(t,x, \mu, \alpha)(x')$ are jointly measurable and $(x,\mu,u)\mapsto \partial_{\mu}g^u(x,\mu)(x')$ is also jointly measurable. In addition, for any $ (t,x,\mu,\alpha,u)\in[0,T]\times \RR^d \times \cP_2(\RR^d)\times A\times I$, $x'\mapsto\partial_{\mu}(b^u,\sigma^u,f^u)(t,x, \mu, \alpha)(x')$ and $x'\mapsto \partial_{\mu}g^u(x,\mu)(x')$ are continuous and have at most linear growth. 
\end{enumerate}

\end{assumption}

Assumptions~\ref{ass:dynamic-2}--\ref{ass:dynamic-3} are common even for non-mean field equations to guarantee the well-posedness of forward SDEs, see e.g. \cite{carmona2015forward}. The additional measurability condition in~\ref{ass:dynamic-1} and~\ref{ass:dynamic-4} are used to maintain the measurability of the graphon system. Assumptions~\ref{ass:dynamic-4}--\ref{ass:dynamic-5} are needed to guarantee the well-posedness and the measurability of the backward equation to be studied below.

The Hamiltonian of the GMFC is the family $H = (H^u)_{u\in I}$ of functions 
\begin{equation}
	\label{fo:hamiltonian}
	H^u(t,x,\mu,y,z,\alpha)=b^u(t,x,\mu,\alpha)\cdot y +\sigma^u(t,x,\mu,\alpha)\cdot z + f^u(t,x,\mu,\alpha), \quad u \in I.
\end{equation}

As discussed in Section~\ref{sec:lderiva}, we denote by $\partial_\mu H^u(t,x,\mu_0,y,z,\alpha)$ the derivative with respect to $\mu$ computed at $\mu_0$ whenever all the other variables $t$, $x$, $y$, $z$ and $\alpha$ are fixed. For each $u\in I$, $\partial_\mu H^u(t,x,\mu_0,y,z,\alpha)$ is an element of $L^2(\t \Omega,\RR^d)$ and we identify it with a function from $\RR^d$ to $\RR^d$, 
$
	\partial_\mu H^u(t,x,\mu_0,y,z,\alpha)(\, \cdot \,) : \tilde{x} \mapsto \partial_\mu H^u(t,x,\mu_0,y,z,\alpha)(\t x).
$ 
It satisfies
$D \t H^u(t,x,\t X,y,z,\alpha) = \partial_\mu H^u(t,x,\mu_0,y,z,\alpha)(\t X)$  almost-surely under $\t \PP$, where $\t\PP$ is a probability measure defined on another space $\tilde\Omega$ such that $\t\PP\circ \t X^{-1}=\mu_0$.

\noindent\textbf{Construction of the representative variable of $\cG$.}
To express the derivative with respect to the graphon mean field $\cG$ according to the definition of $L$-derivatives, we construct a family of random variables with distributions $\cG^u, u\in I$.\footnote{In classical mean field contexts, the mean field is typically the probability distribution of the state of a representative agent.}

Define the following random variable      $\Theta^u:=X^{\vartheta^u},$
where $\vartheta^u$ is a random variable defined on $\Omega$ (assuming $\Omega$ is rich enough to support this independent randomization) and taking values in $I$, with density $\tilde{G}(u,\cdot)$, and such that it is independent of all other random variables and stochastic processes defined before.
For each $u\in I$, $\Theta^u$ has distribution $\cG^u$ defined in~\eqref{Lab}.

The graphon FBSDE system with given control profile $\alpha \in \cM\bH^2_T$ is:
\begin{equation}
	\label{eq:fb}
	\left\{
	\begin{aligned}
		&\textstyle dX^u_t= b^u(t,X^u_t,\cG^u_t,\alpha^u_t)dt 
		+\sigma^u(t,X^u_t,\cG^u_t,\alpha^u_t)dW^u_t,\\
		&\textstyle dY^u_t=-\partial_xH^u(t,X^u_t,\cG^u_t,Y^u_t,Z^u_t,\alpha^u_t)dt + Z^u_t dW^u_t\\
		&\textstyle \qquad\qquad-\int_I\tilde{G}(v,u)\t\EE[\partial_\mu  H^v(t,\t X^v_t,\cG^v_t,\t Y^v_t,\t Z^v_t,\t\alpha^v_t)(X^u_t)]dvdt,\\
		&\textstyle  Y^u_T= \partial_xg^u(X^u_T,\cG^u_T)+\int_I\tilde{G}(v,u)\t\EE[\partial_\mu g^v(\t X^v_T,\cG^v_T)(X^u_T)]dv, \; X^u_0= \xi^u,\; u\in I.
	\end{aligned}
	\right.
\end{equation}
 The family of BSDEs is called the adjoint equation associated to the control profile $\alpha$.
\begin{definition}\label{def:FBSDE}
	Given $\alpha\in \cM\bH^2_T$, a solution of the coupled graphon mean field FBSDE system consists of a family of  processes $ \Phi:= (X^u,Y^u,Z^u)_{u\in I}$ with $(X^u,Y^u,Z^u)\in \mathbf{S}^2(\mathbb{F}^u)\times\mathbf{S}^2(\mathbb{F}^u)\times \mathbf{H}^2(\mathbb{F}^u)$, $u \in I$, satisfying \eqref{eq:fb} for a.e. $u\in I$ and such that   $u\mapsto \cL(X^u_t,Y^u_t,Z^u_t,\alpha^u_t)$ is measurable for a.e. $t\in[0,T]$. 
\end{definition}

\begin{remark}\label{rq:int}
    Note that the definition of a solution to \eqref{eq:fb} and Remark~\ref{rq:gudefine} imply that for any Borel set $B\in \cB(\RR^d\times\RR^d\times\RR^{d\times m}\times A)$, $u\mapsto \cL(X^u_t,Y^u_t,Z^u_t,\alpha^u_t)(B)$ is measurable for any $t\in[0,T]$. Then by Assumption~\ref{ass:dynamic-5}, the two mappings
    $u\mapsto \tilde{G}(v,u)\t\EE[\partial_\mu  H^v(t,\t X^v_t,\cG^v_t,\t Y^v_t,\t Z^v_t,\t\alpha^v_t)(x)],$ $u \mapsto \tilde{G}(v,u)\t\EE[\partial_\mu g^v(\t X^v_T,\cG^v_T)(x)]$
    are measurable for all $v \in I$, provided that the expectations are well defined. We will give conditions to guarantee that their integrals are well defined in the sequel.
    \end{remark}

\subsection{Existence and uniqueness results}\label{sec:ex}
 
We first give the well-posedness for the controlled dynamics under any admissible control.

\begin{theorem}\label{thm:exf}
	Let $\alpha\in\cM\bH^2_T(A)$ be an admissible control profile. Suppose Assumptions~\ref{ass:graphon} and~\ref{ass:dynamic-1}--\ref{ass:dynamic-3} hold. Then there exists a unique solution to the forward controlled system \eqref{eq:gcd}, and moreover
	$\sup_{u\in I}\|X^u\|^{2}_{\mathbf{S}^2}<\infty.
	$ %
\end{theorem}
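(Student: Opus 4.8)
The plan is to solve \eqref{eq:gcd} by a Cauchy--Picard fixed point argument on the space of families of measure flows, treating the neighborhood mean field $\cG$ as an exogenous input at each iteration. Let $\cE$ denote the set of families $m=(m^u)_{u\in I}$ with $m^u\in\cP_2(\cC^d)$ satisfying the weak measurability of $u\mapsto m^u$ and $\sup_{u\in I}\int_{\cC^d}\sup_{t}|\omega_t|^2\,m^u(d\omega)<\infty$, metrized for each $t\in[0,T]$ by $\rho_t(m,m')^2:=\sup_{u\in I}\cW_2\big(m^u|_{[0,t]},{m'}^u|_{[0,t]}\big)^2$ (Wasserstein distance on $\cC([0,t],\RR^d)$). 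Given $m\in\cE$, I first form the time marginals $m^u_t$ and the neighborhood mean field $[\cG m_t]^u=\int_I\t G(u,v)m^v_t\,dv$, which is a genuine probability measure depending measurably on $u$ by Remarks~\ref{rq:2} and~\ref{rq:gudefine}. For each $u\in I$ I then solve the decoupled SDE with $[\cG m_t]^u$ frozen in the coefficients; under Assumptions~\ref{ass:dynamic-2}--\ref{ass:dynamic-3} the drift and diffusion are Lipschitz and of linear growth in $(x,\alpha)$ with constants uniform in $u$, so classical SDE theory yields a unique strong solution $X^{m,u}\in\mathbf{S}^2(\FF^u)$. Setting $\Phi(m)^u:=\cL(X^{m,u})$ defines the Picard map, and a fixed point of $\Phi$ is exactly a solution of \eqref{eq:gcd}.

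Next I would establish the uniform a priori bound. Using the linear-growth estimate $|b^u(t,x,\nu,a)|\le|b^u(t,0,\delta_0,0)|+C(|x|+\cW_2(\nu,\delta_0)+|a|)$ (and likewise for $\sigma^u$), together with the Burkholder--Davis--Gundy inequality and Gr\"onwall, I obtain
\[
\EE\Big[\sup_{s\le T}|X^{m,u}_s|^2\Big]\le C\Big(1+\EE|\xi^u|^2+\int_0^T|b^u(t,0,\delta_0,0)|^2\,dt+\|\alpha^u\|_{\bH}^2+\int_0^T\!\cW_2([\cG m_t]^u,\delta_0)^2\,dt\Big).
\]
The key point is that $\cW_2([\cG m_t]^u,\delta_0)^2=\int_I\t G(u,v)\!\int_{\RR^d}|x|^2 m^v_t(dx)\,dv\le\sup_{v}\int|x|^2 m^v_t(dx)$, because the normalized graphon satisfies $\int_I\t G(u,v)\,dv=1$. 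Combined with $\sup_u\EE|\xi^u|^2<\infty$ (Assumption~\ref{ass:dynamic-4}), $\sup_u\int_0^T|b^u(t,0,\delta_0,0)|^2dt<\infty$ (Assumption~\ref{ass:dynamic-2}) and $\sup_u\|\alpha^u\|_{\bH}<\infty$ (admissibility), this gives a bound uniform in $u$, which I would first use to confine the iteration to a ball of $\cE$ and which ultimately yields the claimed estimate $\sup_{u\in I}\|X^u\|^2_{\mathbf{S}^2}<\infty$.

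For the contraction I would couple $X^{m,u}$ and $X^{m',u}$ through the common Brownian motion $W^u$ and common initial datum $\xi^u$. The Lipschitz assumption then gives, after BDG,
\[
\EE\Big[\sup_{s\le t}|X^{m,u}_s-X^{m',u}_s|^2\Big]\le C\int_0^t\EE\big[|X^{m,u}_s-X^{m',u}_s|^2\big]ds+C\int_0^t\cW_2\big([\cG m_s]^u,[\cG m'_s]^u\big)^2 ds.
\]
The crucial graphon estimate is $\cW_2([\cG m_s]^u,[\cG m'_s]^u)^2\le\int_I\t G(u,v)\cW_2(m^v_s,{m'}^v_s)^2dv\le\rho_s(m,m')^2$, obtained by gluing the optimal couplings of $m^v_s$ and ${m'}^v_s$ against the mixing weights $\t G(u,\cdot)$ and again using $\int_I\t G(u,v)\,dv=1$. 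Taking the supremum over $u$ and applying Gr\"onwall leads to $\rho_T(\Phi(m),\Phi(m'))^2\le C\int_0^T\rho_s(m,m')^2ds$; iterating produces the standard factorial bound $\rho_T(\Phi^n(m),\Phi^n(m'))^2\le\frac{(CT)^n}{n!}\rho_T(m,m')^2$, so some iterate of $\Phi$ is a contraction on all of $[0,T]$ and the Banach fixed point theorem delivers existence and uniqueness.

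The step I expect to be the main obstacle is not the analytic estimate but the \emph{measurability in the label} $u$: I must verify that $\Phi$ preserves the weak measurability of $u\mapsto m^u$, that is, that $u\mapsto\cL(X^{m,u})$ and hence $u\mapsto\cL(X^{m,u}_t,\alpha^u_t)$ is measurable. This requires combining the joint measurability of the coefficients in $(t,x,\mu,\alpha,u)$ from Assumptions~\ref{ass:dynamic-2} and~\ref{ass:dynamic-5}, the $L^2$-measurability of $u\mapsto a^u$ from the definition of admissibility, the measurability of $u\mapsto\mu^u_0$ from Assumption~\ref{ass:dynamic-1}, and the measurability of $u\mapsto[\cG m_t]^u$ from Remarks~\ref{rq:2}--\ref{rq:gudefine}, together with a measurable-dependence (stability) argument for the strong solution of the decoupled SDE as a function of its measurably varying data. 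One must also check that the fixed point obtained as a $\rho_T$-limit of measurable iterates inherits weak measurability, which holds since $\rho_T$-convergence is compatible with measurability of the law in the topology of weak convergence.
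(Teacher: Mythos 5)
Your overall strategy coincides with the paper's: a Picard iteration for the McKean--Vlasov-type system, uniform-in-$u$ a priori bounds via linear growth, BDG and Gr\"onwall, and a contraction driven by the estimate $\cW_2([\cG m_s]^u,[\cG m'_s]^u)^2\le\int_I\t G(u,v)\,\cW_2(m^v_s,{m'}^v_s)^2\,dv$ (which is the paper's Lemma~\ref{lem:cg}). These analytic steps are correct, and you give more detail on them than the paper does; the paper deliberately omits them as standard and devotes its entire proof to the one point it identifies as the real difficulty, namely that $u\mapsto\cL(X^{u,n}_t,\alpha^u_t)$ is weakly measurable at every Picard step, so that the neighborhood mean field is well defined at the next step.

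On that point your proposal has a genuine gap. You invoke ``a measurable-dependence (stability) argument for the strong solution of the decoupled SDE as a function of its measurably varying data,'' but no off-the-shelf argument of this kind applies here: the driving Brownian motions $W^u$ are distinct and independent across labels, and the control is only required to satisfy that $u\mapsto a^u(t,\cdot,\cdot)\in L^2((\cC^d\times\Upsilon,\WW\otimes\iota);A)$ is measurable --- there is no joint measurability of $(u,\omega)\mapsto\alpha^u_t(\omega)$ and no common probability space on which the solutions for different $u$ form a measurable field. The measurability must therefore be established at the level of laws. The paper does this by (i) approximating the merely $L^2$-measurable control by continuous controls via Lusin's theorem, chosen so that $u\mapsto\cL(\alpha^{u,m})$ is measurable, (ii) reducing weak measurability of $u\mapsto\cL(X^{u,n,m},\alpha^{u,m})$ to measurability of finite-dimensional functionals $u\mapsto\EE\bigl[\prod_iF_i(X^{u,n,m}_{t_i})G_i(\alpha^{u,m}_{t_i})\bigr]$ handled as in the uncontrolled graphon SDE literature, and (iii) passing to the limit in $m$ and then $n$. (An alternative device, used later in the paper for the backward equation, is to transfer the whole system to a canonical space driven by a single Brownian motion, as in Lemma~\ref{lem:barx}.) Your closing remark that weak measurability survives $\rho_T$-limits is fine, but the preservation of measurability under a single application of the Picard map is precisely the step that needs this machinery, and your sketch does not supply it.
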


\begin{proof}
   We will proceed to the proof by using a fixed point argument following techniques similar to e.g. the proof of~\cite[Proposition 2.1]{erhangmfs} for graphon systems (without control). The bound can also be obtained through the fixed point iteration argument. We will construct the iteration equation and show that the solutions of iteration equations finally converge to a fixed point. However in order to show the measurability of $u\mapsto \cL(X^u_t,\alpha^u_t), t\in[0,T]$, a crucial step is to show this property holds at every step of the fixed point iterations, which is the main difficulty. Hence, here we only focus on the proof of measurability. 
	Let  $X^{u,0}_t=\xi^u\in\mathbf{S}^2(\FF^u)$ for all $t\in[0,T]$ and each $u\in I$. Then by Assumption~\ref{ass:dynamic-1}, we have for any $t\in[0,T]$ and $B\in\mathcal{B}(\RR^d)$, $u\mapsto\cL(X^{u,0}_t)(B)$ is measurable. For $n \ge 1$, denote $\mu^{u,n-1}:=\cL(X^{u,n-1})$. For each $u\in I$, define $\cG^{u,n-1}: [0,T] \to \cP(\RR^d)$ as
	$$
        \cG^{u,n-1}_t(dx)=\t G\mu^{n-1}_t[u](dx):=\int_I \t G(u,v)\mu^{v,n-1}_t(dx)du,
    $$
    and define further
	\begin{equation}\label{eq:itn}
		X^{u,n}_t=X^{u,n-1}_0+\int_0^t b^u(s,X^{u,n-1}_s,\cG^{u,n-1}_s,\alpha^u_s) ds+\int_0^t \sigma^u(s,X^{u,n-1}_s,\cG^{u,n-1}_s,\alpha^u_s) dW^u_s.
	\end{equation} 
    Notice that for all $n\geq 1$, and every $t\in(0,T],$ and $u\in I$, $\cL(X^{u,n}_t)$ is a continuous distribution on $\RR^d$. Then by Remark~\ref{rq:gudefine}, $\cG^{u,n}$ is well defined if we show that $u\mapsto\cL(X^{u,n})$ is measurable. We show that for any bounded and continuous function $F: \cC^d\times \cD^d\to \RR $, 
	$$
        I\ni u\mapsto \EE \bigl[ F(X^{u,n},\alpha^u)\bigr]\in \RR
    $$
	is measurable. Since the control $\alpha^u$ is not necessarily a continuous process, we will take a continuous approximation. By Lusin's Theorem (see e.g.~\cite{rudin1987}), we can find a sequence of continuous controls $(\alpha^m)_{m\geq 0}$ such that 
	$\|\alpha^{u,m}-\alpha^u\|_{\bH}\to 0,$
	as $m\to \infty$ for all $u\in I$ and for each $m$, $u\mapsto \cL(\alpha^{u,m})$ is measurable. Let us denote the solution of \eqref{eq:itn}  under control $\alpha^m$ by $X^{n,m}$. By the Lipschitz continuity of $b^u$ and $\sigma^u$, we have that for each $n$, any $u\in I$ and any $t\in [0,T]$, $X^{u,n,m}_t$ converges  to $X^{u,n}_t$ in $L^2(\RR^d)$ as $m\to\infty$. Then in order to obtain the measurability of $u\mapsto \cL(X^{u,n},\alpha^{u})$, it suffices to prove that for each $m$, $u\mapsto \cL(X^{u,n,m},\alpha^{u,m})$ is measurable, and thus it is sufficient to prove that for any bounded and continuous function $F: \cC^d\times \cD^d\to \RR $,
	$
        I\ni u\mapsto \EE \bigl[ F(X^{u,n,m},\alpha^{u,m})\bigr]\in \RR
    $
	is measurable. It is equivalent to prove that
 \begin{equation}\label{eq:itnn}
	I\ni u\mapsto \EE \Bigl[\prod_{i=1}^N F_i(X^{u,n,m}_{t_i})G_i(\alpha^{u,m}_{t_i})\Bigr]\in \RR
 \end{equation}
	is measurable for any time mesh $0\leq t_1\leq\cdots\leq t_N\leq T $, $N\in \NN$ and any bounded and continuous functions $F_i, G_i,$ $i=1,\ldots,N$ on $\RR^d$. Then following similar arguments as in the proof of \cite[Proposition 2.1]{erhangmfs}, we obtain that the measurability holds for the above mapping \eqref{eq:itnn}, and thus holds for $u\mapsto \cL(X^{u,n}, \alpha^u)$, which suffices to show that $u\mapsto \cL(X^{u}, \alpha^u)$ is measurable since $X^u$ is the unique limit of $X^{u,n}$ in $\mathbf{S}^2(\FF^u)$ for each $u\in I$ as $n$ goes to $\infty$. 
\end{proof}

As mentioned in Definition~\ref{def:FBSDE}, we want to ensure the measurability of the law of the FBSDE solution with respect to the index.  To this end, we introduce an auxiliary system, following an approach already used e.g. in~\cite{amicaosu2022graphonbsde,wu2022}, which will facilitate the proof of the measurability of the law of the FBSDE solution, especially for the backward adjoint equation, since we are not aware of a straightforward proof for the backward part as in the proof of Theorem~\ref{thm:exf}. We take a filtered probability space $(\bar{\Omega},\bar{\cF},\bar{\mathbb{F}},\bar{\PP})$, where we define an $m$-dimensional Brownian motion $\bar{W}$ and a random variable $\o\Lambda$ of uniform distribution on $\Upsilon$, such that $\o W$ and $\o\Lambda$ are independent. Let $\o\cF_0=\sigma(\o\Lambda)$ be the $\sigma$-field generated by $\o\Lambda$ and $\o\FF$ be the natural filtration generated by $\o W$ and augmented by $\mathcal{N}_{\PP}$ and $\o\cF_0$. We define a family of initial random variables $\o\xi^u$, $u \in I$, taking values in $\RR^d$ such that $\cL(\o\xi^u)=\cL(\xi^u)$ and $\o\xi^u$ is $\o\cF_0$-measurable for all $u\in I$, and $I \ni u \mapsto \bar\xi^u \in L^2(\bar\Omega;\RR^d)$ is measurable. This is possible thanks to Assumption~\ref{ass:dynamic-1}, \ref{ass:dynamic-4} and a result from Blackwell and Dubins \cite{blackwell1983extension}, as an extension of the classic Skorokhod representation theorem, see Lemma~\ref{lem:coupleini} in Appendix~\ref{app:proofs-sec2} for more details. We stress that the random variables $\bar{\xi}^u,u\in I,$ are not necessarily independent (contrary to $\xi^u,u\in I$). Let $\bar{\mathbb{F}}=\{\bar{\cF}_t,t\geq 0\}$ be the filtration generated by $\bar{W}$ and $\bar{\cF}_0$, augmented by the set of $\o\PP$-null events.  

We define
$\bar{\mathbb{H}}^2_{T}(A)$ as the set of $A$-valued $\bar{\mathbb{F}}$-progressively measurable processes $\phi$ defined on the canonical space and satisfying
$$
    \bar{\EE}\Bigl[\int_0^T |\phi_t|^2dt\Bigr]<\infty.
$$
Let $\overline{\cM \mathbb{H}}^2_T(A)$ denote the set of all $(\bar{\alpha}^u)_{u\in I}$ such that for each $u\in I$, $\bar{\alpha}^u\in \bar{\mathbb{H}}^2_{T}(A)$, and for any $t\in[0,T]$, the mapping $u\mapsto\bar{\alpha}^u_t\in L^2(\o\Omega;A)$ is measurable and
$$
	\int_I \bar{\EE}\Bigl[\int_0^T |\o\alpha^u_t|^2dt\Bigr] du<\infty.
$$
For each $\alpha\in\cM \mathbb{H}^2_T(A)$, we define 
$\bar{\alpha}^u_t=a^u(t,\bar{W}_{\cdot\wedge t},\o\Lambda),$ 
which is well-defined, since by Remark~\ref{rq:a} there is a ($\WW$-a.s.) unique family of functions $(a^u)_{u\in I}$ corresponding to each $\alpha\in\cM \mathbb{H}^2_T(A)$.
Notice that by the definition of admissibility (see Definition~\ref{def:admisscontrol}), $\o\alpha$ defined above belongs to $\overline{\cM \mathbb{H}}^2_T(A)$. Hence for each $\alpha\in\cM \mathbb{H}^2_T(A)$,
we identify a corresponding control profile $\bar{\alpha}\in\overline{\cM \mathbb{H}}^2_T(A)$ such that $\cL(\bar{\alpha}^u)=\cL(\alpha^u)$ for all $u\in I$. 
On the canonical space, we define the following controlled graphon system, with $\o\cG^u_t=[\cG\cL(\o X^{\o\alpha}_t)]^u$:
\begin{equation}
		d\o X^{\bar{\alpha},u}_t= b^u(t,\o X^{\bar{\alpha},u}_t,\o \cG^u_t,\o \alpha^u_t)dt 
		+\sigma^u(t,\o X^{\bar{\alpha},u}_t,\o \cG^u_t,\o \alpha^u_t)d\o W_t, \quad \o X^u_0=\o \xi^u,\quad u\in I.
	\label{eq:cs}
\end{equation}

 With the common Brownian motion $\bar{W}$ and the coupled initial condition $(\bar{\xi}^u)_{u\in I}$, we also define %
 the FBSDE system~\eqref{eq:fb} on the canonical space $(\bar{\Omega},\bar{\cF},\bar{\mathbb{F}},\bar{\PP})$. Consider the following, 
 \begin{equation}
	\label{eq:fbww}
    \left\{
	\begin{aligned}
		&\textstyle	d\o X^u_t= b^u(t,\o X^u_t,\o\cG^u_t,\o\alpha^u_t)dt 
		+\sigma^u(t,\o X^u_t,\o\cG^u_t,\o\alpha^u_t)d\o W_t,\\
		&d\o Y^u_t=-\partial_xH^u(t,\o X^u_t,\o\cG^u_t,\o Y^u_t,\o Z^u_t,\o\alpha^u_t)dt + \o Z^u_t d\o W_t\\
		&\textstyle\qquad\qquad-\int_I\tilde{G}(v,u)\t\EE[\partial_\mu  H^v(t,\t {\o X}^v_t,\o\cG^v_t,\t {\o Y}^v_t,\t{\o Z}^v_t,\t{\o\alpha}^v_t)(\o X^u_t)]dvdt,\\
		&\textstyle \o Y^u_T= \partial_xg^u(\o X^u_T,\o\cG^u_T)+\int_I\tilde{G}(v,u)\t\EE[\partial_\mu g^u(\t {\o X}^v_T,\o\cG^v_T)(\o X^u_T)]dv, \quad \o X^u_0= \o\xi^u,\quad\quad u\in I .
	\end{aligned}
    \right.
\end{equation}
In fact, under Assumption~\ref{ass:dynamic-1}-\ref{ass:dynamic-5}, the Lipschitz property of coefficients of \eqref{eq:fb} (which will be shown in the proof of Theorem~\ref{thm:exfb}) can guarantee the pathwise uniqueness property for \eqref{eq:fb}. Then by e.g. \cite[Theorem 5.1]{03ma}, one has $\cL(\o X^u,\o Y^u,\o Z^u)=\cL( X^u, Y^u,Z^u)$ for all $u\in I$. Now for any $t\in[0,T]$, We can consider the measurability of $u\mapsto (\o X^u_t,\o Y^u_t,\o Z^u_t,\o\alpha^u_t)\in L^2(\bar{\Omega};\RR^d\times\RR^d\times\RR^{d\times m}\times A)$, which is stronger than (and implies) the measurability $u\mapsto\cL( X^u_t, Y^u_t,Z^u_t,\alpha^u_t)$ in the topology of weak convergence.

\begin{lemma}\label{lem:barx}
 Let $\o\alpha\in \overline{\cM\bH}^2_T(A)$. Suppose Assumptions~\ref{ass:dynamic-1}-\ref{ass:dynamic-4} hold, then there exists a unique solution to \eqref{eq:cs} and for any $t\in[0,T]$, $I\ni u\mapsto (\bar{X}^u_t,\o\alpha^u_t)\in L^2(\bar{\Omega};\RR^d\times A)$ is measurable.
 \end{lemma}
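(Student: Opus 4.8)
The plan is to treat the two assertions separately. Existence and uniqueness, together with the uniform bound $\sup_{u\in I}\|\o X^u\|_{\mathbf{S}^2}^2<\infty$, will follow exactly as in Theorem~\ref{thm:exf}: the mean-field coupling in \eqref{eq:cs} enters only through the family of marginal laws $(\cL(\o X^u_t))_{u\in I}$ via $\cG$, so the relevant fixed-point map acts on families of measure flows and, by weak uniqueness of the frozen-mean-field SDE, depends neither on the driving noise being $\o W$ rather than $(W^u)_u$ nor on the particular coupling of the initial data. Since $\cL(\o\xi^u)=\cL(\xi^u)$ and the coefficients $b^u,\sigma^u$ are unchanged, this map is the same contraction as in Theorem~\ref{thm:exf} --- using Assumption~\ref{ass:graphon} so that $\t G(u,\cdot)$ is a probability density and hence $\cW_2([\cG\mu_t]^u,[\cG\mu'_t]^u)^2\le\int_I\t G(u,v)\cW_2(\mu^v_t,{\mu'}^v_t)^2\,dv$, together with the Lipschitz bound of Assumption~\ref{ass:dynamic-3} --- and yields a unique solution with $\cL(\o X^u)=\cL(X^u)$. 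Here Assumptions~\ref{ass:dynamic-1} and~\ref{ass:dynamic-4} are what make the coupled, jointly measurable family $u\mapsto\o\xi^u\in L^2(\o\Omega;\RR^d)$ available in the first place. The genuinely new content is the \emph{strong} ($L^2(\o\Omega)$) measurability of $u\mapsto(\o X^u_t,\o\alpha^u_t)$, which is exactly the feature that the single Brownian motion $\o W$ is designed to deliver.

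To obtain it, I would run the McKean--Vlasov Picard iteration on the canonical space and propagate measurability at each step. Set $\o X^{u,0}_t=\o\xi^u$, so that $u\mapsto\o X^{u,0}$ is measurable as an $L^2(\o\Omega;\cC^d)$-valued map by construction. Assuming $u\mapsto\o X^{u,n}$ is $L^2(\o\Omega;\cC^d)$-measurable, composing with the $1$-Lipschitz map $\cL:L^2(\o\Omega;\RR^d)\to(\cP_2(\RR^d),\cW_2)$ shows $u\mapsto\cL(\o X^{u,n}_t)$ is measurable, and Remark~\ref{rq:2} then gives measurability of $u\mapsto\o\cG^{u,n}_t=[\cG\cL(\o X^n_t)]^u$. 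The next iterate is the solution of the frozen-mean-field SDE
\begin{equation*}
\o X^{u,n+1}_t=\o\xi^u+\int_0^t b^u(s,\o X^{u,n+1}_s,\o\cG^{u,n}_s,\o\alpha^u_s)\,ds+\int_0^t\sigma^u(s,\o X^{u,n+1}_s,\o\cG^{u,n}_s,\o\alpha^u_s)\,d\o W_s ,
\end{equation*}
and the whole problem reduces to showing that solving this standard (non-mean-field) SDE preserves $L^2$-measurability in $u$.

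For that, the key observation is that on the common space $(\o\Omega,\o\FF,\o\PP)$ both the Lebesgue integral $\mathbf{H}^2\ni\phi\mapsto\int_0^{\cdot}\phi_s\,ds$ and the It\^o integral $\mathbf{H}^2\ni\phi\mapsto\int_0^{\cdot}\phi_s\,d\o W_s$ are continuous linear maps into $\mathbf{S}^2$ (the latter by the It\^o isometry and Doob's inequality), hence Borel measurable and measurability-preserving; and composing the measurable-in-$u$ data $(\o X^u_\cdot,\o\cG^u_\cdot,\o\alpha^u_\cdot)$ with the jointly measurable, Lipschitz coefficients $b^u,\sigma^u$ (Assumptions~\ref{ass:dynamic-2}--\ref{ass:dynamic-3}) produces integrands that are $\mathbf{H}^2$-measurable in $u$. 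Solving the frozen SDE by its own inner Picard iteration and invoking these facts, one propagates $L^2(\o\Omega;\cC^d)$-measurability from $\o X^{u,n}$ to $\o X^{u,n+1}$; since $L^2$-measurability is stable under the uniform-in-$u$ $\mathbf{S}^2$ limits that define first the frozen SDE solution and then the fixed point, it passes to $\o X^u$. Finally, combining measurability of $u\mapsto\o X^u_t\in L^2(\o\Omega;\RR^d)$ with measurability of $u\mapsto\o\alpha^u_t\in L^2(\o\Omega;A)$ (built into the definition of $\overline{\cM\bH}^2_T(A)$) yields measurability of the pair into the product $L^2(\o\Omega;\RR^d\times A)$.

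I expect the main obstacle to be this inductive propagation of $L^2(\o\Omega)$-measurability through the SDE solution operator, and within it the verification that $u\mapsto b^u(s,\o X^u_s,\o\cG^u_s,\o\alpha^u_s)$ and its $\sigma^u$ analogue are measurable as $\mathbf{H}^2$-valued maps: this rests on the joint measurability of Assumption~\ref{ass:dynamic-2} combined with the Carath\'eodory/Lipschitz continuity of Assumption~\ref{ass:dynamic-3}, and on the care needed to compose a measurable-in-$u$ random field with these coefficients while respecting the $\o\FF$-progressive measurability in $\omega$. This is precisely the step that fails to give more than measurability of laws in the independent-noise setting of Theorem~\ref{thm:exf}, and that the auxiliary canonical construction is introduced to repair.
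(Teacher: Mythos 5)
Your proposal is correct and follows essentially the same route as the paper: a Picard iteration on the canonical space in which $L^2(\o\Omega;\cdot)$-measurability in $u$ is propagated at each step by (i) composing the jointly measurable, Lipschitz coefficients with measurable-in-$u$ data (the paper invokes \cite[Lemma A.3]{wu2022} for exactly this) and (ii) noting that Lebesgue and It\^o integration preserve this measurability (\cite[Lemma A.4]{wu2022} and the definition of the stochastic integral as a limit of finite sums), before passing to the limit. The only difference is cosmetic: you freeze the mean field and solve the resulting SDE exactly via an inner Picard loop, whereas the paper lags both the state and the measure in a single fully explicit iteration; both rest on the same two measurability-preservation facts.
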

 The proof is provided for completeness in Appendix~\ref{app:proofs-sec2}.
Now we give the well-posedness of the graphon FBSDE system \eqref{eq:fb}.

\begin{theorem}\label{thm:exfb}
	Let  $\alpha\in\cM\bH^2_T$ be an admissible control profile and let $X=X^\alpha$ be the corresponding state process satisfying~\eqref{eq:gcd}. Suppose Assumptions~\ref{ass:dynamic-1}--\ref{ass:dynamic-5} hold and assume
	\begin{equation}
		\label{eq:ass:b1}
		\sup_{u\in I}\EE \int_{0}^T \left[ \vert \partial_{x} f^u(t,X^u_{t},\cG^u_t,\alpha^u_{t}) \vert^2  + \int_I\tilde{G}(v,u)
		\t \EE \bigl[ \vert \partial_{\mu} f^v(t,\t X^v_{t},\cG^v_t,\t \alpha^v_{t})(X^u_{t}) \vert^2 \bigr]du\right] dt 
		< + \infty,
	\end{equation}
	and
	\begin{equation}
		\label{eq:ass:b2}
		\sup_{u\in I}\left\{\EE\Big[ | \partial_{x} g^u (X^u_T,\cG^u_T)|^2 + \int_I\tilde{G}(v,u) \t\EE [| \partial_{\mu} g^v(\t X^v_{T},\cG^v_T)(X^u_T)|^2]  du \Big]\right\}
		< \infty.
	\end{equation}
Then there exists a unique solution to \eqref{eq:fb}, and it satisfies
\begin{equation}\label{eq:fbregu}
	\sup_{u\in I}\big[\|X^u\|^2_{\mathbf{S}^2} +  \|Y^u\|^2_{\mathbf{S}^2} + \|Z^u\|^2_{\mathbf{H}^2} \big] < \infty.
\end{equation}
\end{theorem}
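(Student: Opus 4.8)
The plan is to exploit the decoupled structure of \eqref{eq:fb}: the forward equation for $X^u$ coincides with the controlled dynamics \eqref{eq:gcd} and does not involve $(Y^u,Z^u)$, while the backward system is a \emph{linear} McKean--Vlasov (mean field) BSDE system in $(Y^u,Z^u)_{u\in I}$ whose only coupling across labels is through the laws of the solution. First I would invoke Theorem~\ref{thm:exf} to obtain the unique forward solution $X=(X^u)_{u\in I}$ together with the uniform bound $\sup_{u\in I}\|X^u\|^2_{\mathbf S^2}<\infty$ and the measurability of $u\mapsto\cL(X^u_t,\alpha^u_t)$; by Remarks~\ref{rq:2}--\ref{rq:gudefine} this makes the neighbourhood mean fields $\cG^u_t$ well defined and measurable in $u$. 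From this point on $X$, $\cG^u$ and $\alpha^u$ are treated as given data, and the task reduces to solving the backward system. To keep track of the measurability in the label (which, as noted after Theorem~\ref{thm:exf}, is not accessible by a direct flow argument in the backward direction), I would work throughout on the canonical system \eqref{eq:fbww} driven by the common Brownian motion $\bar W$ and the coupled initial conditions $(\bar\xi^u)_{u\in I}$ supplied by Lemma~\ref{lem:barx}; there one obtains the \emph{strong} measurability $u\mapsto(\bar X^u_t,\bar Y^u_t,\bar Z^u_t,\bar\alpha^u_t)\in L^2(\bar\Omega)$, which is stronger than and implies the required measurability of the laws, and one transfers the conclusions back to \eqref{eq:fb} via pathwise uniqueness and \cite[Theorem~5.1]{03ma}.

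For the backward system itself I would recast it as a fixed point over families of laws. Because $H^v=b^v\cdot y+\sigma^v\cdot z+f^v$, the adjoint driver $\partial_xH^u(t,X^u_t,\cG^u_t,\cdot,\cdot,\alpha^u_t)$ is affine in $(Y^u,Z^u)$ with coefficients built from $\partial_xb^u$ and $\partial_x\sigma^u$, which are bounded by the uniform Lipschitz bound \ref{ass:dynamic-3}; hence it is Lipschitz in $(Y,Z)$ uniformly in $u$. Likewise $\partial_\mu H^v(\cdots)(x')=\partial_\mu b^v(\cdots)(x')\cdot\tilde Y^v+\partial_\mu\sigma^v(\cdots)(x')\cdot\tilde Z^v+\partial_\mu f^v(\cdots)(x')$ is \emph{linear} in $(\tilde Y^v,\tilde Z^v)$ with bounded coefficients. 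The terminal data and the part of the source coming from $\partial_xf^u$ and $\partial_\mu f^v$ are square integrable thanks to hypotheses \eqref{eq:ass:b1}--\eqref{eq:ass:b2}, while the linear-growth bound in \ref{ass:dynamic-5} controls the evaluation of the $\partial_\mu$-terms at $X^u_t$. Given a candidate family of laws, freezing the interaction integral $\int_I\tilde G(v,u)\tilde\EE[\partial_\mu H^v(\cdots)(X^u_t)]\,dv$ turns each $Y^u$-equation into a standard affine BSDE, uniquely solvable in $\mathbf S^2(\FF^u)\times\mathbf H^2(\FF^u)$ by the Pardoux--Peng theory; this defines the fixed-point map $\Phi$.

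To close the fixed point I would use weighted ($e^{\beta t}$) a priori BSDE estimates together with the boundedness of the graphon averaging operator. Assumption~\ref{ass:graphon} (boundedness of $G$ and $G^{-1}_\infty<\infty$) gives $\tilde G\in L^\infty(I\times I)$, so that the averaging operator $\psi\mapsto\big(u\mapsto\int_I\tilde G(v,u)\psi(v)\,dv\big)$ is bounded both on $L^2(I)$ and on $L^\infty(I)$ uniformly in $u$. For two inputs the difference of interaction terms satisfies $|\delta I^u_t|^2\le C\int_I\tilde\EE[|\delta\tilde Y^v_t|^2+|\delta\tilde Z^v_t|^2]\,dv$, and the weighted stability estimate bounds $\EE\int_0^Te^{\beta t}(|\delta Y^u_t|^2+|\delta Z^u_t|^2)\,dt$ by $\tfrac{C}{\beta}$ times $\sup_v\EE\int_0^Te^{\beta t}(|\delta\tilde Y^v_t|^2+|\delta\tilde Z^v_t|^2)\,dt$; taking $\beta$ large makes $\Phi$ a contraction in the $\sup_u$-weighted norm, yielding existence and uniqueness. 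Because all constants are uniform in $u$ (uniform Lipschitz bounds, $G^{-1}_\infty<\infty$, and the uniform integrability in \eqref{eq:ass:b1}--\eqref{eq:ass:b2}), the same a priori estimate applied to the fixed point delivers the bound \eqref{eq:fbregu}.

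The main obstacle is \emph{not} the contraction but propagating the label-measurability through the backward recursion: the very definition of \eqref{eq:fb} requires $v\mapsto\cL(X^v_t,Y^v_t,Z^v_t,\alpha^v_t)$ to be measurable so that the integrand of the interaction term is measurable in $v$ (Remark~\ref{rq:int}), yet this is exactly the property one is trying to establish. I would resolve this by showing, on the canonical space, that each Picard iterate preserves the strong measurability $u\mapsto(\bar Y^{u,n}_t,\bar Z^{u,n}_t)\in L^2(\bar\Omega)$ --- using that the frozen interaction term is measurable in $u$ and that the BSDE solution map depends continuously in $L^2$ on its data --- and then passing to the contraction limit. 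The $Z$-component of the mean field coupling, which enters only through $\tilde\EE$ and an $L^2$-bounded kernel, is precisely what the $e^{\beta t}$-weighting is designed to absorb, so it does not create an additional analytic difficulty beyond the bookkeeping of measurability.
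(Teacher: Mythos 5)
Your proposal is correct and follows essentially the same route as the paper: the forward part is delegated to Theorem~\ref{thm:exf}, the adjoint system is solved by verifying the Lipschitz/linearity structure of $\partial_xH^u$ and $\partial_\mu H^v$ and running a contraction under \eqref{eq:ass:b1}--\eqref{eq:ass:b2}, and label-measurability is propagated through Picard iterates on the canonical system \eqref{eq:fbww}. The only detail your sketch glosses over is the pointwise-in-$t$ measurability of $u\mapsto\bar Z^{u,n}_t\in L^2(\bar\Omega)$ (continuity of the solution map in $\mathbf H^2$ does not by itself give measurability of the time-$t$ evaluation), which the paper obtains via the martingale representation theorem, It\^o isometry, and the Lebesgue differentiation theorem.
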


\begin{proof}
	Note that in~\eqref{eq:fb}, since the control $\alpha\in \cM\bH^2_T$ is fixed, the dynamics of $X$ does not depend on $(Y,Z)$. Thus the existence and uniqueness of a solution for the forward dynamics is given by Theorem~\ref{thm:exf}. We now prove the existence and uniqueness for the adjoint equation. 
	In order to do that, we verify the Lipschitz continuity of the driver.
    Equation \eqref{eq:ass:b2} guarantees the terminal condition is in $L^2(\Omega;\RR^d)$ for each $u\in I$.  
	Notice that $\partial_{x} b$ and $\partial_{x} \sigma$ are bounded since $b$ and $\sigma$ are Lipschitz continuous in $x$ by Assumption~\ref{ass:dynamic-3}. Since $H$ is linear in $y$ and $z$, $\partial_{x}H$ is Lipschitz continuous in both $y$ and $z$.  Notice also that for each $u\in I$, we have for some constant $C$
    \begin{align*}
    	\EE [ \vert \partial_{\mu} b^u(t,\t X^u_t,\cG^u_t,\t\alpha^u_t)(X^u_t) \vert^2]^{1/2}
	&\leq C,
	\\
	\EE [ \vert \partial_{\mu} \sigma^u(t,\t X^u_t,\cG^u_t,\t\alpha^u_t)(X^u_t) \vert^2]^{1/2}
	&\leq C,
    \end{align*}
    since $X^u\in \mathbf{S}^2(\FF^u)$ by Theorem~\ref{thm:exf}, and 
	$\mu \mapsto b^u(t,x,\mu,a)$ and $\mu \mapsto \sigma^u(t,x,\mu,a)$ are assumed to be Lipschitz continuous with respect to the Wasserstein-2 distance again by Assumption~\ref{ass:dynamic-3}; see \cite[Remark 5.27]{carmonabook1}.
	
	Next, we verify the Lipschitz continuity of $\t\EE[\partial_\mu \t H^u(t,\t X^u_t,\cG^u_t,\t Y^u_t,\t Z^u_t,\t\alpha^u_t)(X^u_t)]$ with respect to the laws $\cL(Y^u_t),\cL(Z^u_t)$. It involves an expectation over the independent copy $(\t X,\t Y,\t Z)$ of $(X,Y,Z)$, and hence depends on the law of $(X^u,Y^u,Z^u)$. 
    Using the above two displayed inequalities  and by Fubini's theorem we have for all $u\in I$, there exists some constant $C$, such that 
	\begin{equation*}
		\begin{split}
			&{\mathbb E} \t \EE \bigl[ \vert \partial_\mu (b^u(t,\t X^u_t,\cG^u_t,\t\alpha^u_t)(X^u_t)\cdot \t Y^u_t) \vert^2 \bigr] \leq C \t\EE \bigl[ \vert \t Y^u_{t} \vert^2 \bigr]=C\EE \bigl[ \vert Y^u_{t} \vert^2 \bigr],
			\\
			&{\mathbb E} \t \EE \bigl[ \vert \partial_\mu (\sigma^u(t,\t X^u_t,\cG^u_t,\t\alpha^u_t)(X^u_t)\cdot \t Z^u_t) \vert^2 \bigr] \leq C \t\EE \bigl[ \vert \t Z^u_{t} \vert^2 \bigr]=C\EE \bigl[ \vert Z^u_{t} \vert^2 \bigr].
		\end{split}
	\end{equation*}
    Then, following standard contraction arguments for (graphon) mean field BSDEs (see e.g. \cite{amicaosu2022graphonbsde}) and under conditions \eqref{eq:ass:b1}-\eqref{eq:ass:b2}, we can show the existence and uniqueness of a family $(Y^u, Z^u)_{u\in I}$ satisfying \eqref{eq:fb} and,
	$$
	\sup_{u\in I}\big[\|X^u\|^2_{\mathbf{S}^2} +  \|Y^u\|^2_{\mathbf{S}^2} + \|Z^u\|^2_{\mathbf{H}^2} \big] < \infty.
	$$

	Now let us prove the measurability of the backward part, since the measurability of the joint law $\cL(X^u,\alpha^u)$ has been shown in Theorem~\ref{thm:exf}. We approach this by showing that for any $t\in[0,T]$, $u\mapsto(\bar{Y}^u_t,\bar{Z}^u_t)\in L^2(\bar{\Omega};\RR^d\times \RR^{d\times m})$ is measurable, where $(\bar{X},\bar{Y},\bar{Z})$ is the solution to~\eqref{eq:fbww}. By Remark~\ref{rq:2}, $u\mapsto \tilde{G}\cL(X^u_T)[u]$ is measurable. Since $\partial_{(x,\mu)}g$ is assumed to be jointly measurable by Assumption~\ref{ass:dynamic-5}, we have $u\mapsto \o Y^u_T\in L^2(\o\Omega;\RR^d)$ is measurable by Lemma~\ref{lem:barx}. For each $\o\alpha\in \overline{\cM\bH}^2_T(A)$, the forward component $(\o X^{\o\alpha},\o\alpha)$ is known to exist and we can plug it into the backward part. Following similar induction arguments as in the proof of Lemma~\ref{lem:barx}, we want to prove that for any $n \ge 1$, we have: for every $t\in[0,T]$, $I\ni u\mapsto (\bar{Y}^{u,n}_t,\o Z^{u,n}_t)\in L^2(\bar{\Omega};\RR^d\times\RR^{d\times m})$ is measurable provided that $(\o Y^{u,n-1},\o Z^{u,n-1})_{u\in I}$ satisfy the same property. By employing similar arguments used in the proof of \cite[Lemma 2.2]{wu2022}, we get the measurability of  $\o Y^{n}_t$ with respect to $u$ for every $t$. We now prove the measurability of $\o Z^{n}$. Let $F^u$ denote the driver of label $u$ in the adjoint equation, in particular, we simply denote
    \begin{align*}
    & F^u\big(t,\o Y^{u,n-1}_t,\o Z^{u,n-1}_t,\cL(\o Y^{\cdot,n-1}_t,\o Z^{\cdot,n-1}_t)\big) \\
    &=-\partial_xH^u(t,\o X^u_t,\o\cG^u_t,\o Y^{u,n-1}_t,\o Z^{u,n-1}_t,\o\alpha^u_t)-\int_I\tilde{G}(v,u)\t\EE[\partial_\mu  H^v(t,\t {\o X}^v_t,\o\cG^v_t,\t {\o Y}^{v,n-1}_t,\t{\o Z}^{v,n-1}_t,\t{\o\alpha}^v_t)(\o X^u_t)]dv.
    \end{align*}
    Notice that by the martingale representation theorem, we have for any $t\in[0,T]$, 
    $$
        \bar{\EE}\bigg[\o Y^{u,n-1}_T+\int_0^T F^u\big(s,\o Y^{u,n-1}_s,\o Z^{u,n-1}_s,\cL(\o Y^{\cdot,n-1}_s,\o Z^{\cdot,n-1}_s)\big)ds\bigg|\bar{\cF}_t\bigg]=\o Y^{u,n}_0+\int_0^t\o Z^{u,n}_sd\o W_s.
    $$
    Hence by Assumption~\ref{ass:dynamic-5}, we have $u\mapsto \int_0^t\o Z^{u,n}_sd\o W_s\in L^2(\o\Omega;\RR^d)$ is measurable. 
    Notice that for every $u\in I$, $\int_0^\cdot\o Z^{u,n}_sd\o W_s$ is a martingale starting from the origin. By the definition of quadratic variation processes and It\^o isometry, we have
    for any $t\in[0,T]$ and any $u_1,u_2\in I$,
 $$
    \EE\bigg[\bigg(\int_0^t(\o Z^{u_1,n}_s-\o Z^{u_2,n}_s)ds\bigg)^2\bigg]\leq T \EE\bigg[\bigg(\int_0^t(\o Z^{u_1,n}_s-\o Z^{u_2,n}_s)d\o W_s\bigg)^2\bigg].
$$
    This suffices to give that $u\mapsto \int_0^t\o Z^{u,n}_sds\in L^2(\o\Omega;\RR^d)$ is also measurable for any $t\in[0,T]$. Hence it follows that 
    $I\ni u\mapsto (\int_{t-h}^{t}\o Z^{u,n}_sds)/h\in L^2(\o\Omega;\RR^d)$
    is also measurable for any $t\in[h,T]$ and any small $h$. Then by Lebesgue differentiation theorem and Cauchy-Schwarz inequality, we have for any $u\in I$ and a.e. $t\in[0,T]$,
    $(\int_{t-h}^{t}\o Z^{u,n}_sds)/h \stackrel{L^2}\longrightarrow\o Z^{u,n}_t,$ 
    as $h\to 0$. This shows that $u\mapsto\o Z^{u,n}_t\in L^2(\o\Omega;\RR^d)$ is measurable for a.e. $t\in[0,T]$. 
 Finally combining this with the result in Theorem~\ref{thm:exf}, it follows that for a.e. $t\in[0,T]$, $u\mapsto\cL(X^u_t,Y^u_t,Z^u_t,\alpha^u_t)$ is measurable. 
\end{proof}

\section{Pontryagin Principle of Optimality}
\label{sec:pontryagin}

In this section, we study necessary and sufficient conditions for optimality
when the Hamiltonian satisfies appropriate assumptions of convexity and the coefficients satisfy appropriate differentiability conditions, which will be specified in the sequel.  Throughout the section, we suppose that Assumptions~\ref{ass:dynamic-1}-\ref{ass:dynamic-5} hold. In addition, we need the following regularity properties.

\vspace{5pt}

\begin{assumption}%
\label{ass:c}
\begin{enumerate}[label=\normalfont{\textbf{(\ref*{ass:c}\arabic{*})}}, ref=\normalfont{\textbf{(\ref*
{ass:c}\arabic{*})}}, topsep=2pt,itemsep=0pt,partopsep=0pt, parsep=0pt]
	\item \label{ass:c-1} For each $u\in I$, the functions $b^u, \sigma^u, f^u$ are differentiable with respect to $(x,\mu,\alpha)$, the mappings $(x,\mu,\alpha) \mapsto 
	\partial_{x} (b^u,\sigma^u,f^u) (t,x,\mu,\alpha)$ and $(x,\mu,\alpha) \mapsto 
	\partial_{\alpha} (b^u,\sigma^u,f^u) (t,x,\mu,\alpha)$ are continuous for any $t \in [0,T]$, the mapping 
	$\RR^d \times 
	L^2(\Omega;\RR^d) 
	\times A \ni (x,X,\alpha) \mapsto \partial_{\mu} (b^u,\sigma^u,f^u)(t,x,\PP_{X},\alpha)(X) \in L^2(\Omega;\RR^{d \times d} \times \RR^{(d \times m) \times d} \times \RR^d)$ is continuous for any 
	$t \in [0,T]$. Similarly, the function $g^u$ is differentiable with respect to $x$ and $\mu$, and the partial derivatives are continuous.
	\vspace{2pt}
	
	\item \label{ass:c-2} $((b^u,\sigma^u,f^u)(t,0,\delta_{0},0))_{0 \leq t \leq T}$ are uniformly bounded. 
	The partial derivatives $\partial_{x} (b^u,\sigma^u)$ and $\partial_{\alpha} (b^u,\sigma^u)$ are bounded uniformly in $(u,t,x,\mu,\alpha)$.  The $L^2(\RR^d,\mu)$-norm of $x' \mapsto
	\partial_{\mu}(b^u,\sigma^u)(t,x,\mu,\alpha)(x')$ 
	is also uniformly bounded for all $u\in I$. There exists a constant $L$ such that, for all $u\in I$, any $R \geq 0$ and $(t,x,\mu,\alpha)$ such that $\vert x \vert, \| \mu \|_{2},\vert \alpha \vert \leq R$, $\vert \partial_{(x,\alpha)} f^u(t,x,\mu,\alpha) \vert$ 
	and $\vert \partial_{x} g^u(x,\mu) \vert$ are bounded by $L(1+R)$ and the $L^2(\RR^d,\mu)$-norm of 
	$x' \mapsto
	\bigl(\partial_{\mu}f^u(t,x,\mu,\alpha)(x'),\partial_{\mu}g^u(x,\mu)(x')\bigr)$ is bounded by $L(1+R)$. 
	
	\item \label{ass:c-3} The action set $A$ is convex.

	\item \label{ass:c-4} $[0,T]\times \RR^d \times \cP_2(\RR^d)\times A\times I \ni (t,x,\mu,\alpha,u) \mapsto \partial_{\alpha}(b^u,\sigma^u,f^u)(t,x, \mu, \alpha)$ is jointly measurable.
\end{enumerate}
\end{assumption}

Conditions~\ref{ass:c-1}--\ref{ass:c-2} are adapted from \cite{carmona2015forward}, but here we state them label by label.
Assumption~\ref{ass:c-4} is used to guarantee the measurability since the calculations will involve the derivative with respect to $\alpha$.

\subsection{Necessary condition}
\label{subsec:4:1}

In this subsection, we study a necessary condition of optimality. The main result is the following. 

\begin{theorem}\label{thm:nece}
	Assume for each $u\in I$, the Hamiltonian $H^u$ is convex in $\alpha$. Let $\alpha\in\cM\bH^2_T$ be an optimal control. Let $ X$ be the associated optimally controlled state, and $( Y,  Z)$ be the associated adjoint processes solving the adjoint equation (see~\eqref{eq:fb}). Under Assumptions~\ref{ass:c-1}--\ref{ass:c-4} we have, for all $\beta\in \cM\bH^2_T$, 
	$$
	\int_I \EE\bigl[	H^u(t,X^u_t,\cG^u_t,Y^u_t,\alpha^u_t)\bigr]du\le \int_I \EE\bigl[H^u(t,X^u_t,\cG^u_t,Y^u_t,\beta^u_t)\bigr]du, \quad dt-a.e. 
	$$
	Furthermore, denoting by $\cM A$ the set of all measurable functions $u \mapsto a^u \in A$, we have for Lebesgue-almost every $(u,t)\in I\times [0,T]$, for all $\beta\in \cM A$, 
	\begin{equation}
		\label{fo:Pminimum}
		H^u(t,X^u_t,\cG^u_t,Y^u_t,\alpha^u_t)\le H^u(t,X^u_t,\cG^u_t,Y^u_t,\beta^u), \quad  d\PP -a.e..
	\end{equation}
\end{theorem}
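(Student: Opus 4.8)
The plan is to obtain the stated optimality conditions from the first-order condition satisfied by $J$ along convex perturbations of the optimal control $\alpha$, and then to localize the resulting integral inequality successively in time, in the label, and in $\omega$. Since $A$ is convex by Assumption~\ref{ass:c-3}, for any $\beta\in\cM\bH^2_T$ and any $\epsilon\in[0,1]$ the profile $\alpha^\epsilon:=\alpha+\epsilon(\beta-\alpha)$ is again admissible; optimality of $\alpha$ then forces $\frac{d}{d\epsilon}\big|_{\epsilon=0^+}J(\alpha^\epsilon)\ge 0$ once this one-sided derivative is shown to exist.

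The first and most delicate step is the construction of the variation process. For each $u\in I$ I would set $V^u_t:=\lim_{\epsilon\to0}(X^{u,\epsilon}_t-X^u_t)/\epsilon$ and prove that this limit exists in $\mathbf{S}^2(\FF^u)$, uniformly in $u$, with $V=(V^u)_{u\in I}$ solving the linear graphon mean field SDE obtained by differentiating~\eqref{eq:gcd} in $\epsilon$. Its coefficients involve $\partial_x b^u\,V^u_t$ and $\partial_\alpha b^u\,(\beta^u_t-\alpha^u_t)$, together with a coupling term $\int_I\tilde G(u,v)\,\t\EE[\partial_\mu b^u(\cdots)(\t X^v_t)\,\t V^v_t]\,dv$ arising from the $L$-derivative of the neighborhood mean field, and analogously for $\sigma^u$. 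Proving convergence requires the boundedness of $\partial_x(b^u,\sigma^u)$, $\partial_\alpha(b^u,\sigma^u)$ and of the $L^2$-norm of $\partial_\mu(b^u,\sigma^u)$ from Assumption~\ref{ass:c-2}, followed by a Gr\"onwall estimate that is uniform in $u$. This is where I expect the main obstacle to be: the linearized system is itself a fully coupled graphon mean field SDE, so one must simultaneously control the $\partial_\mu$ coupling uniformly in $u$ and propagate the weak measurability of $u\mapsto\cL(V^u_t,\dots)$ through the fixed-point/Gr\"onwall argument, exactly as in the proof of Theorem~\ref{thm:exf}.

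Granting $V$, I would differentiate the cost,
\begin{equation*}
\frac{d}{d\epsilon}\Big|_{\epsilon=0}J(\alpha^\epsilon)=\int_I\EE\Big[\int_0^T\big(\partial_x f^u\cdot V^u_t+\partial_\alpha f^u\cdot(\beta^u_t-\alpha^u_t)\big)\,dt+\partial_x g^u\cdot V^u_T\Big]\,du+R,
\end{equation*}
where $R$ gathers the $L$-derivative contributions of $f^u$ and $g^u$, and then apply It\^o's formula to $Y^u_t\cdot V^u_t$ and integrate over $u\in I$. By the very construction of the adjoint equation~\eqref{eq:fb}, the $\partial_x H^u$ drift of $Y^u$ cancels the $\partial_x b^u$, $\partial_x\sigma^u$ and $\partial_x f^u$ contributions coming from $dV^u$ and the quadratic covariation, while the terminal condition matches $\partial_x g^u\cdot V^u_T$; the graphon adjoint term $\int_I\tilde G(v,u)\,\t\EE[\partial_\mu H^v(\cdots)(X^u_t)]\,dv$ is reconciled with the mean-field piece $R$ after a Fubini interchange of the labels $u$ and $v$ and an application of the $L$-derivative duality on the lifted space $(\t\Omega,\t\PP)$. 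Once every $V$-dependent term has cancelled, one is left with
\begin{equation*}
0\le\frac{d}{d\epsilon}\Big|_{\epsilon=0}J(\alpha^\epsilon)=\int_I\EE\Big[\int_0^T\partial_\alpha H^u(t,X^u_t,\cG^u_t,Y^u_t,Z^u_t,\alpha^u_t)\cdot(\beta^u_t-\alpha^u_t)\,dt\Big]\,du.
\end{equation*}

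It remains to localize. For the first conclusion, replacing $\beta$ by $\alpha+\mathbf{1}_S(\beta-\alpha)$ for an arbitrary measurable $S\subseteq[0,T]$ yields, for $dt$-a.e.\ $t$, the inequality $\int_I\EE[\partial_\alpha H^u\cdot(\beta^u_t-\alpha^u_t)]\,du\ge0$; convexity of $H^u$ in $\alpha$, through $H^u(\cdots,\beta)-H^u(\cdots,\alpha)\ge\partial_\alpha H^u(\cdots,\alpha)\cdot(\beta-\alpha)$, then upgrades this to the claimed minimization of the label-integrated Hamiltonian. For the second conclusion I would argue by contradiction: if the pointwise first-order condition $\partial_\alpha H^u(\cdots)\cdot(a-\alpha^u_t)\ge0$ failed for all $a\in A$ on a set of positive $du\otimes dt\otimes d\PP$ measure, a measurable selection (Filippov/Jankov--von Neumann) of a bounded descent direction would produce an admissible $\beta\in\cM\bH^2_T$ violating the displayed integral inequality. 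Hence the first-order condition, and by convexity the pointwise minimization of $a\mapsto H^u(t,X^u_t,\cG^u_t,Y^u_t,Z^u_t,a)$, holds for a.e.\ $(u,t)$ and $\PP$-a.e.\ $\omega$; separability of $A$ together with the continuity of $\alpha\mapsto H^u$ then lets one fix a single null set valid for all $\beta\in\cM A$, giving~\eqref{fo:Pminimum}.
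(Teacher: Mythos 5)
Your proposal is correct and follows essentially the same route as the paper: construction of the variation process for the convex perturbation $\alpha+\epsilon(\beta-\alpha)$, the G\^ateaux derivative of the cost, duality via It\^o's formula applied to $Y^u_t\cdot V^u_t$ with a Fubini interchange of labels, and the convexity of $H^u$ in $\alpha$ to upgrade the first-order condition. The only cosmetic difference is in the final localization, where the paper perturbs directly with $\beta^u_t=\gamma^u\mathbf{1}_{\mathcal I}(u)\mathbf{1}_{B}(t,\omega)$ over arbitrary measurable $\mathcal I\subseteq I$ and progressively measurable $B$ rather than invoking a measurable-selection contradiction; both yield \eqref{fo:Pminimum}.
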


Before providing its proof, we introduce some preliminary results.
We note that as a consequence of Assumption~\ref{ass:c-3}, the set of admissible controls $\cM\bH^2_T$ is convex. For a fixed $\alpha\in\cM\bH^2_T$, we denote by $(X^u)_{u\in I}$ the corresponding controlled state process, namely the solution of \eqref{eq:gcd} where the whole system is controlled by $\alpha$ with given initial condition $X_0=\xi$. We will compute the G\^ateaux derivative of the cost functional $J$ at any $\alpha\in\cM\bH^2_T$ in all directions. Choose an arbitrary $\beta\in\cM\bH^2_T$ such that $\alpha + \epsilon \beta \in \cM\bH^2_T$ for $\epsilon >0$ small enough. We then compute the variation of $J$ at $\alpha$ with target control $\beta$.

We use the notation $\Phi^u_{t}:= (X^u_{t},\cG^u_t,\alpha^u_{t})$, and then define the so-called variation process $ V=(V^u_t)_{0\le t\le T,u\in I}$ as the solution of the following graphon SDE system:
\begin{equation}
	\label{fo:Voft}
	\begin{split}
		dV^u_t& =  \bigl[\partial_xb^u(t,\Phi^u_{t}) \cdot V^u_t+\int_ I \tilde{G}(u,v)\t\EE \bigl[ \partial_\mu b^u(t,\Phi^u_{t})(\t X^v_t)\cdot \t V^v_t \bigr]dv+\partial_\alpha b^u(t,\Phi^u_{t}) \cdot \beta^u_t \bigr]dt \nonumber \\
		& +\Big[\partial_x\sigma^u(t,\Phi^u_{t})\cdot V^u_t+\int_I\tilde{G}(u,v)\t\EE \bigl[
		\partial_\mu \sigma^u(t,\Phi^u_{t})(\t X^v_t)\cdot \t V^v_t\big]dv+\partial_\alpha \sigma^u(t,\Phi^u_{t})\cdot\beta^u_t \Big]dW^u_t,
	\end{split}
\end{equation}
with $V^u_{0}=0$. 
Similarly as the proof of Theorem~\ref{thm:exfb}, under the assumption of uniformly bounded partial derivatives (Assumptions~\ref{ass:c-1}--\ref{ass:c-2}), we have for all $u\in I$, 
\begin{equation*}
	\begin{split}
		&\t \EE \bigl[ \vert \partial_\mu b(t,\Phi^u_t)(\t X^u_t)\cdot \t V^u_t \vert \bigr] \leq C \t\EE \bigl[ \vert \t V^u_{t} \vert^2 \bigr]^{1/2}=C\EE \bigl[ \vert V^u_{t} \vert^2 \bigr]^{1/2},
		\\
		&\t \EE \bigl[ | \partial_\mu \sigma(t,\Phi^u_t)(\t X^u_t)\cdot \t V^u_t | \bigr] \leq C \t\EE \bigl[ \vert \t V^u_{t} \vert^2 \bigr]^{1/2}=C\EE \bigl[ \vert V^u_{t} \vert^2 \bigr]^{1/2}.
	\end{split}
\end{equation*}
It follows that
$$\int_ I \tilde{G}(u,v)\t\EE \bigl[ \partial_\mu b^u(t,\Phi^u_{t})(\t X^v_t)\cdot \t V^v_t \bigr]dv\leq C\sup_{u\in I}\EE \bigl[ \vert V^u_{t} \vert^2 \bigr]^{1/2}.$$
Then following classical contraction arguments for forward graphon systems (see \cite{erhangmfs}), the existence and uniqueness of the variation process is guaranteed by the Lipschitz continuity of the coefficients in the dynamics of $(V^u)_{u\in I}$. Moreover, %
$\sup_{u\in I}\| V^u_t
\|^2_{\mathbf{S}^2}<\infty,$ 
and $u\mapsto \cL(V^u_t)$ is measurable for all $t \in [0,T]$.

We first compute an expression for the derivative of the cost. We denote by $J^u(\alpha)$ the total cost of label $u$ under control profile $\alpha$, i.e.
$$
    J^u(\alpha)=\EE\left[\int_0^T f^u(t,X^u_t,\cG^u_t,\alpha^u_t)dt+g^u(X^u_T,\cG^u_T)\right].
$$

\begin{lemma}
	\label{le:gateaux}
	Under Assumptions~\ref{ass:c-1}--\ref{ass:c-3}, for each $u\in I$, $\cM\bH^2_T\ni\alpha\mapsto J^u(\alpha)\in \RR$ is G\^ateaux differentiable and its derivative in the direction $\beta\in \cM\bH^2_T$ is given by:
	\begin{equation}
		\label{eq:gat}
		\begin{split}
			&\frac{d}{d\epsilon}J^u(\alpha+\epsilon\beta)\big|_{\epsilon=0} \\
            &=\EE\int_0^T \left[\partial_x f^u(t,\Phi^u_t) \cdot V^u_t+\int_I \tilde{G}(u,v)\t\EE
			[\partial_\mu 
			f^u(t,\Phi^u_t)(\t X^v_t)\cdot\t V^v_t ]dv+\partial_\alpha f^u(t,\Phi^u_t) \cdot \beta^u_t \right]dt\\
			&\qquad\qquad +\EE \left[ \partial_x g^u(X^u_T,\cG^u_T) \cdot V^u_T + \int_I \tilde{G}(u,v)\t \EE [\partial_\mu g^u(X^u_T,\cG^u_T)(\t X^v_T)\cdot \t V^v_T]dv\right].
		\end{split}
	\end{equation}
\end{lemma}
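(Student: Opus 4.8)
The plan is to follow the classical variational approach for mean field control, as in \cite{carmona2015forward}, but with the graphon neighborhood term handled through the representative variable $\Theta^u$. Fix $u\in I$ and, for small $\epsilon>0$, let $X^{\epsilon}$ denote the state profile controlled by $\alpha+\epsilon\beta$; define the difference quotient $V^{u,\epsilon}_t:=\epsilon^{-1}(X^{u,\epsilon}_t-X^u_t)$, noting $V^{u,\epsilon}_0=0=V^u_0$ since only the control is perturbed. The first step is to establish flow differentiability: $\lim_{\epsilon\to0}\sup_{u\in I}\|V^{u,\epsilon}-V^u\|_{\mathbf{S}^2}=0$, where $V^u$ is the variation process of \eqref{fo:Voft}. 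This follows by subtracting the SDEs for $X^{u,\epsilon}$ and $X^u$, linearizing the coefficients with the mean value theorem together with the boundedness of $\partial_x(b^u,\sigma^u)$, $\partial_\alpha(b^u,\sigma^u)$ and the uniform $L^2$-bound on $x'\mapsto\partial_\mu(b^u,\sigma^u)(t,x,\mu,\alpha)(x')$ from Assumption~\ref{ass:c-2}, and then applying a Grönwall argument of the same type as in Theorem~\ref{thm:exf}. The continuity of the derivatives in Assumption~\ref{ass:c-1} ensures the linearization remainder vanishes, and the graphon term is controlled uniformly in $u$ exactly as in the a priori estimate following \eqref{fo:Voft}.

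With flow differentiability in hand, I would write
\begin{equation*}
\frac{J^u(\alpha+\epsilon\beta)-J^u(\alpha)}{\epsilon}
=\EE\int_0^T\frac{f^u(t,\Phi^{u,\epsilon}_t)-f^u(t,\Phi^u_t)}{\epsilon}\,dt
+\EE\,\frac{g^u(X^{u,\epsilon}_T,\cG^{u,\epsilon}_T)-g^u(X^u_T,\cG^u_T)}{\epsilon},
\end{equation*}
where $\Phi^{u,\epsilon}_t=(X^{u,\epsilon}_t,\cG^{u,\epsilon}_t,\alpha^u_t+\epsilon\beta^u_t)$, and expand each increment to first order in its three arguments. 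The $x$-direction and the $\alpha$-direction are routine: the corresponding increments converge to $\partial_xf^u(t,\Phi^u_t)\cdot V^u_t$ and $\partial_\alpha f^u(t,\Phi^u_t)\cdot\beta^u_t$ by the first step and the continuity of $\partial_x f^u,\partial_\alpha f^u$ from Assumption~\ref{ass:c-1}. The crux is the measure direction, which I would treat via the representative variable.

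Since $\cG^u_t=\int_I\tilde G(u,v)\cL(X^v_t)\,dv$ is the law of $\Theta^u=X^{\vartheta^u}_t$, with $\vartheta^u$ independent of density $\tilde G(u,\cdot)$, the perturbed neighborhood mean field $\cG^{u,\epsilon}_t$ is the law of $X^{\vartheta^u,\epsilon}_t$ built on the \emph{same} randomization $\vartheta^u$, so that its directional velocity along the lift is $V^{\vartheta^u}_t$. Applying the definition \eqref{eq:l} of the $L$-derivative and then taking the expectation over $\vartheta^u$ yields
\begin{equation*}
\lim_{\epsilon\to0}\frac{f^u(t,X^u_t,\cG^{u,\epsilon}_t,\alpha^u_t)-f^u(t,\Phi^u_t)}{\epsilon}
=\int_I\tilde G(u,v)\,\t\EE\bigl[\partial_\mu f^u(t,\Phi^u_t)(\t X^v_t)\cdot\t V^v_t\bigr]\,dv,
\end{equation*}
and identically for $g^u$ at the terminal time, which produces exactly the mean field terms in \eqref{eq:gat}. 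To conclude I would interchange the limit $\epsilon\to0$ with $\EE\int_0^T$ by dominated convergence, dominating the integrands through the uniform boundedness and linear growth of the derivatives in Assumption~\ref{ass:c-2}, the $\mathbf{S}^2$-bounds on $X^u$ and $V^u$, and the fact that $\cW_2(\cG^{u,\epsilon}_t,\cG^u_t)\to0$.

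The step I expect to be the main obstacle is this measure direction, for two reasons. First, the perturbation of $\cG^u_t$ does not come from moving a single lifted variable along a straight line but from simultaneously perturbing the whole family $(\cL(X^v_t))_{v\in I}$; one must justify that coupling everything through the single randomization $\vartheta^u$ legitimately identifies $V^{\vartheta^u}_t$ as the directional velocity of $\Theta^u$ and reproduces the graphon weight $\tilde G(u,v)$ after integrating in $v$. Second, the first-order remainder in the $\mu$-expansion of $f^u$ and $g^u$ must be shown to be $o(\epsilon)$ strongly enough to survive both the time integral and the expectation; here the joint $L^2$-continuity of $(x,X,\alpha)\mapsto\partial_\mu f^u(t,x,\PP_X,\alpha)(X)$ postulated in Assumption~\ref{ass:c-1}, combined with the convergence $V^{u,\epsilon}\to V^u$ from the first step, is exactly what controls the remainder uniformly.
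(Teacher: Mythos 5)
Your proposal is correct and follows essentially the same route as the paper: first establish the flow differentiability $\sup_{u\in I}\EE[\sup_t|\epsilon^{-1}(X^{u,\epsilon}_t-X^u_t)-V^u_t|^2]\to 0$ (this is exactly Lemma~\ref{lem:sd}), then expand the cost increment to first order, identifying the measure-direction derivative through the representative variable $\Theta^u=X^{\vartheta^u}$ so that integrating over $\vartheta^u$ reproduces the weight $\tilde G(u,v)$, and finally pass to the limit using the continuity and uniform bounds on the derivatives from Assumptions~\ref{ass:c-1}--\ref{ass:c-2}. The only cosmetic difference is that the paper packages the first-order expansion as $\int_0^1\frac{d}{d\lambda}f^u(t,\Phi^{\lambda,u}_t)\,d\lambda$ along the linear interpolation between $\Phi^u$ and $\widehat\Phi^u$, whereas you write a direct Taylor expansion with an $o(\epsilon)$ remainder; both are controlled by the same continuity assumptions.
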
 
The proof is provided in Appendix~\ref{app:proofs-sec4} for completeness.

\begin{lemma}
	\label{le:duality}
	Under Assumptions~\ref{ass:c-1}--\ref{ass:c-3}, for any $u\in I$, the following holds: 
	\begin{align*}
		&\EE[Y^u_T \cdot V^u_T]
        \\
        &=\EE\int_0^T \Big[Y^u_t \cdot \bigl( \partial_\alpha b^u(t,\Phi^u_t) \cdot \beta^u_t \bigr) 
		+Z^u_t \cdot \bigl( \partial_\alpha\sigma^u(t,\Phi^u_t) \cdot \beta^u_t \bigr) 
		\nonumber\\
		&\phantom{?}-\partial_xf^u(t,\Phi^u_t) \cdot V^u_t -\int_I \tilde{G}(v,u)
		\EE\t\EE \bigl[ \partial_\mu f^v(t,\Phi^v_t)(\t X^u_t)\cdot \t V^u_t \bigr]dv \\
		&\phantom{?}+Z^u_t \cdot \int_I\tilde{G}(u,v)\t\EE \bigl[ \partial_\mu \sigma^u(t,\Phi^u_t)(\t X^v_t)\cdot\t V^v_t \bigr]dv+Y^u_t \cdot \int_I \tilde{G}(u,v)\t\EE \bigl[ \partial_\mu b^u(t,\Phi^u_t)(\t X^v_t)\cdot\t V^v_t \bigr]dv\\
		&\phantom{?}-Z^v_t \cdot \int_I\tilde{G}(v,u)\EE\t\EE \bigl[ \partial_\mu \sigma^v(t,\Phi^v_t)(\t X^u_t)\cdot\t V^u_t \bigr]dv-Y^v_t \cdot \int_I \tilde{G}(v,u)\EE\t\EE \bigl[ \partial_\mu b^v(t,\Phi^v_t)(\t X^u_t)\cdot\t V^u_t \bigr]dv
		\Big]\,dt.
	\end{align*}
\end{lemma}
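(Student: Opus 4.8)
The plan is to derive the identity by the classical integration-by-parts (duality) computation between the variation process $V^u$ and the adjoint process $Y^u$, carried out label by label, while carefully bookkeeping the label-dependent coupling terms. First I would apply It\^o's formula to the product $Y^u_t\cdot V^u_t$ on $[0,T]$ and take expectations. Since $V^u_0=0$, the left-hand side is $\EE[Y^u_T\cdot V^u_T]$, while the right-hand side is $\EE\int_0^T\{(Y^u_t\cdot\text{drift of }V^u_t)+(V^u_t\cdot\text{drift of }Y^u_t)+d\langle Y^u,V^u\rangle_t/dt\}\,dt$. The $W^u$-stochastic integrals have zero expectation, which I would justify from the a priori estimates $\sup_{u}(\|V^u\|_{\mathbf{S}^2}^2+\|Y^u\|_{\mathbf{S}^2}^2+\|Z^u\|_{\mathbf{H}^2}^2)<\infty$ established before the lemma and in Theorem~\ref{thm:exfb}, together with the uniform derivative bounds of Assumption~\ref{ass:c-2}. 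I then substitute the drift and diffusion of $V^u$ from \eqref{fo:Voft}, and the drift $-\partial_x H^u-\int_I\tilde{G}(v,u)\t\EE[\partial_\mu H^v(\ldots)(X^u_t)]dv$ and diffusion $Z^u_t$ of $Y^u$ from \eqref{eq:fb}, noting that $d\langle Y^u,V^u\rangle_t$ pairs $Z^u_t$ with the diffusion coefficient of $V^u$.

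The main algebraic simplification comes next. Expanding $\partial_x H^u=(\partial_x b^u)^\top Y^u_t+(\partial_x\sigma^u)^\top Z^u_t+\partial_x f^u$, I observe that the terms $Y^u_t\cdot(\partial_x b^u\,V^u_t)$ (from the $V$-drift) and $Z^u_t\cdot(\partial_x\sigma^u\,V^u_t)$ (from the covariation) cancel exactly against $-V^u_t\cdot((\partial_x b^u)^\top Y^u_t)$ and $-V^u_t\cdot((\partial_x\sigma^u)^\top Z^u_t)$ (from the $Y$-drift), by the adjointness of the Jacobians. After this cancellation the surviving label-$u$ terms are the $\partial_\alpha b^u$ and $\partial_\alpha\sigma^u$ contractions with $\beta^u_t$, the term $-\partial_x f^u(t,\Phi^u_t)\cdot V^u_t$, and the two coupling terms $Y^u_t\cdot\int_I\tilde{G}(u,v)\t\EE[\partial_\mu b^u(t,\Phi^u_t)(\t X^v_t)\cdot\t V^v_t]dv$ and $Z^u_t\cdot\int_I\tilde{G}(u,v)\t\EE[\partial_\mu\sigma^u(t,\Phi^u_t)(\t X^v_t)\cdot\t V^v_t]dv$, all of which already appear verbatim in the claimed formula.

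The remaining piece, and the step I expect to be the crux, is the backward-coupling term $-\EE[V^u_t\cdot\int_I\tilde{G}(v,u)\t\EE[\partial_\mu H^v(\ldots)(X^u_t)]dv]$. Expanding $\partial_\mu H^v(\ldots)(x')=\partial_\mu b^v(\ldots)(x')\cdot\t Y^v_t+\partial_\mu\sigma^v(\ldots)(x')\cdot\t Z^v_t+\partial_\mu f^v(\ldots)(x')$, I would use Fubini to interchange $\int_I dv$, $\int_0^T dt$, $\EE$ and $\t\EE$, and then perform a copy swap: since the label-$u$ pair $(X^u_t,V^u_t)$ and the tilde copy $(\t X^v_t,\t Y^v_t,\t Z^v_t)$ are independent, I rename the dummy integration variables so that the $u$-label pair becomes the tilde copy $(\t X^u_t,\t V^u_t)$ while the $v$-label triple becomes the original $(X^v_t,Y^v_t,Z^v_t)$. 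This turns the term into $-\int_I\tilde{G}(v,u)\EE\t\EE[\ldots(\t X^u_t)\cdot\t V^u_t]dv$, producing exactly the three $v$-superscript terms in the statement, with $\partial_\mu f^v$ yielding the $f$-coupling term. The delicate points are the legitimacy of this relabeling—which rests on the independence of the copies and on the measurability of $u\mapsto\cL(X^u_t,Y^u_t,Z^u_t,\alpha^u_t)$ from Theorem~\ref{thm:exfb}, ensuring the iterated expectations are well defined—and the integrability needed for Fubini, which follows from the uniform bounds on the derivatives and on the $L^2(\RR^d,\mu)$-norms of the measure derivatives in Assumption~\ref{ass:c-2}, the finiteness of $\sup_u\EE[\sup_t|V^u_t|^2]$, and the integrability of $\tilde{G}$ guaranteed by Assumption~\ref{ass:graphon}. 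Collecting all surviving terms then gives the asserted expression, with the asymmetry $\tilde{G}(u,v)$ versus $\tilde{G}(v,u)$ reflecting the difference between the forward coupling in $V^u$ and the backward coupling in the adjoint equation.
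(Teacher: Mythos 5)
Your proposal is correct and follows essentially the same route as the paper: Itô's product rule (integration by parts) applied to $Y^u_t\cdot V^u_t$, substitution of the dynamics of $V^u$ from \eqref{fo:Voft} and of the adjoint equation, cancellation of the $\partial_x b^u$ and $\partial_x\sigma^u$ contributions against the corresponding pieces of $-\partial_x H^u\cdot V^u_t$, and the Fubini-based copy swap $\EE\t\EE[\partial_\mu H^v(t,\t\Phi^v_t,\t Y^v_t,\t Z^v_t)(X^u_t)\cdot V^u_t]=\EE\t\EE[\partial_\mu H^v(t,\Phi^v_t,Y^v_t,Z^v_t)(\t X^u_t)\cdot\t V^u_t]$ to convert the backward coupling term. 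The integrability and measurability justifications you invoke (uniform $\mathbf{S}^2$/$\mathbf{H}^2$ bounds, Assumption~\ref{ass:c-2}, and the measurability of $u\mapsto\cL(X^u_t,Y^u_t,Z^u_t,\alpha^u_t)$) match those the paper relies on.
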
 
The proof is provided in Appendix~\ref{app:proofs-sec4} for completeness. 
With the above lemmas ready, we now prove the following result regarding the form of the G\^ateaux derivative of $J$.

\begin{lemma}\label{thm:j}
	Under Assumptions~\ref{ass:c-1}--\ref{ass:c-4}, $J$ is G\^ateaux differentiable and the G\^ateaux derivative of $J$ at $\alpha\in\cM\bH^2_T$ in the direction $\beta\in\cM\bH^2_T$ has the following form:
	\begin{equation}
		\frac{d}{d\epsilon}J(\alpha+\epsilon\beta)\big|_{\epsilon=0}=\int_I \EE\left[\int_0^T \partial_\alpha H^u(t,X^u_t,\cG^u_t,Y^u_t,\alpha^u_t) \cdot \beta^u_tdt\right]du. 
	\end{equation}
\end{lemma}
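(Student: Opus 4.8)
The plan is to reduce the global derivative to the label-wise derivatives of Lemma~\ref{le:gateaux}, then use the adjoint terminal condition and the duality identity of Lemma~\ref{le:duality} to eliminate the variation process $V$, exploiting the structure of the graphon interaction so that every mean-field (i.e.\ $\partial_\mu$) cross-term cancels once we integrate over the label $u$.

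First I would justify differentiation under the integral over $I$, namely
$$
\frac{d}{d\epsilon}J(\alpha+\epsilon\beta)\Big|_{\epsilon=0}
= \int_I \frac{d}{d\epsilon}J^u(\alpha+\epsilon\beta)\Big|_{\epsilon=0}\,du.
$$
This is legitimate because the difference quotients $\epsilon^{-1}\bigl(J^u(\alpha+\epsilon\beta)-J^u(\alpha)\bigr)$ admit an $L^1(I)$ dominating function uniformly in $\epsilon$: the uniform bounds on the partial derivatives in Assumption~\ref{ass:c-2}, together with $\sup_u\|V^u\|^2_{\mathbf{S}^2}<\infty$ and $\sup_u\|\beta^u\|_{\bH}<\infty$, control the integrands, while the measurability of $u\mapsto J^u(\alpha+\epsilon\beta)$ follows from the law-measurability established in Theorems~\ref{thm:exf} and~\ref{thm:exfb}.

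Next I would insert the expression from Lemma~\ref{le:gateaux} and handle the terminal terms. Using the adjoint terminal condition in~\eqref{eq:fb}, the $\partial_x g^u$ term of Lemma~\ref{le:gateaux} equals $\EE[Y^u_T\cdot V^u_T]$ minus the $\partial_\mu g$ contribution carried by the second piece of $Y^u_T$; after integrating over $u$, relabelling $u\leftrightarrow v$ (Fubini), and exchanging $\EE$ with $\t\EE$ (which is valid since $(X,Y,Z,V)$ and its tilde copy are i.i.d.), the remaining $\partial_\mu g^u$ term of Lemma~\ref{le:gateaux} cancels exactly that contribution, so that $\int_I(\text{terminal terms})\,du=\int_I\EE[Y^u_T\cdot V^u_T]\,du$. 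I would then substitute Lemma~\ref{le:duality} for $\EE[Y^u_T\cdot V^u_T]$ and integrate over $u$: the term $-\partial_x f^u\cdot V^u_t$ cancels the surviving $\partial_x f$ term of Lemma~\ref{le:gateaux}; the running-cost $\partial_\mu f$ cross-term and the $\partial_\mu b,\partial_\mu\sigma$ cross-terms cancel pairwise under the relabelling $u\leftrightarrow v$, with the graphon weights $\tilde G(u,v)$ and $\tilde G(v,u)$ interchanged by the relabelling (no symmetry of $\tilde G$ is needed). What remains are exactly $\partial_\alpha f^u\cdot\beta^u_t+Y^u_t\cdot(\partial_\alpha b^u(t,\Phi^u_t)\cdot\beta^u_t)+Z^u_t\cdot(\partial_\alpha\sigma^u(t,\Phi^u_t)\cdot\beta^u_t)$, which by the definition~\eqref{fo:hamiltonian} of $H^u$ equal $\partial_\alpha H^u(t,X^u_t,\cG^u_t,Y^u_t,\alpha^u_t)\cdot\beta^u_t$, giving the claimed formula.

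The main obstacle is the bookkeeping of the cross-term cancellation: one must match each $\partial_\mu$ contribution coming from Lemma~\ref{le:gateaux} against the corresponding one in the duality identity, checking that the two layers of expectation and the graphon weights line up correctly under Fubini and the $u\leftrightarrow v$ relabelling (and, for the terminal $g$-terms, under the additional $\EE\leftrightarrow\t\EE$ exchange). The measurability of the laws from Theorems~\ref{thm:exf}--\ref{thm:exfb} and the uniform bounds of Assumption~\ref{ass:c-2} are precisely what make all these interchanges of integration rigorous.
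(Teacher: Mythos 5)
Your proposal is correct and follows essentially the same route as the paper: integrate the label-wise derivative from Lemma~\ref{le:gateaux} over $u$, identify the terminal terms with $\int_I\EE[Y^u_T\cdot V^u_T]\,du$ via the adjoint terminal condition together with Fubini and the $u\leftrightarrow v$ relabelling, then substitute the duality identity of Lemma~\ref{le:duality} and cancel the $\partial_x$ and $\partial_\mu$ cross-terms, leaving exactly $\partial_\alpha H^u\cdot\beta^u_t$. Your additional care about dominated convergence for the interchange of $d/d\epsilon$ with $\int_I du$ is a reasonable supplement that the paper leaves implicit.
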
 
The proof is provided in Appendix~\ref{app:proofs-sec4} for completeness. 
We now prove the necessary condition of optimality, namely, Theorem~\ref{thm:nece}. 
\begin{proof}[Proof of Theorem~\ref{thm:nece}]
Notice that since $A$ is open and convex, for any given $\beta\in\cM\bH^2_T$, we can choose $\epsilon$ small enough such that the perturbation $\alpha^\epsilon_t=\alpha_t+\epsilon (\beta_t-\alpha_t)$ is still in $\cM\bH^2_T$. By the optimality of $\alpha$ and the expression in Lemma~\ref{thm:j}, we have
$$
    0 \le \frac{d}{d\epsilon}J(\alpha+\epsilon(\beta-\alpha))\big|_{\epsilon=0}
    =\int_I\EE\left[\int_0^T\partial_\alpha H^u(t,X^u_t,\cG^u_t,Y^u_t,\alpha^u_t) \cdot (\beta^u_t-\alpha^u_t) dt\right].
$$
By convexity of the Hamiltonian with respect to $\alpha$, we have: for all $u\in I$,
\begin{equation*}
	\begin{split}
		H^u(t,X^u_t,\cG^u_t,Y^u_t,\beta^u_t)-H^u(t,X^u_t,\cG^u_t,Y^u_t,\alpha^u_t)&\geq 
		\partial_\alpha H^u(t,X^u_t,\cG^u_t,Y^u_t,\alpha^u_t) \cdot (\beta^u_t-\alpha^u_t).
	\end{split}
\end{equation*}
It follows that for all $\beta\in\cM\bH^2_T$,
$$
    \int_0^T \int_I\EE\bigl[	H^u(t,X^u_t,\cG^u_t,Y^u_t,\beta^u_t)-H^u(t,X^u_t,\cG^u_t,Y^u_t,\alpha^u_t)\bigr] dudt\ge 0.
$$

Next, we prove the second result in the statement. For a given element $(\gamma^u)_{u\in I}\in \cM A$, a given $\cI\in{\mathcal B}(I)$ and a given progressively-measurable set $B\subset[0,T]\times \Omega$ (i.e. $B\cap[0,t]\times\Omega \in \cB([0,T])\otimes \cF_t$ for any $t\in[0,T]$), we define $\beta\in \cM\bH^2_T$ by:
$$
\beta^u_t(\omega)
= \begin{cases}
	\gamma^u,&\text{ if } u\in\cI \text{ and }(t,\omega)\in B\\
	\alpha^u_t(\omega),&\text{ otherwise.}
\end{cases}
$$
Then we have that
$$
\int_0^T \int_I {\bf 1}_{\cI}(u)\EE\bigl[{\bf 1}_{B}[	H^u(t,X^u_t,\cG^u_t,Y^u_t,\beta^u_t)-H^u(t,X^u_t,\cG^u_t,Y^u_t,\alpha^u_t)]\bigr] dudt \ge 0. 
$$
Since $B$ and $\cI$ are arbitrary, we obtain \eqref{fo:Pminimum}.
\end{proof}

Without the convexity of $A$ and $H$, the following weak necessary condition holds.

\begin{proposition}[Necessary optimality condition, weak version]
\label{prop:nece-weak}
	Suppose Assumptions~\ref{ass:c-1}, \ref{ass:c-2} and~\ref{ass:c-4} hold. Let $\alpha\in\cM\bH^2_T$ be optimal, $X$ be the associated optimally controlled state, and $(Y, Z)$ be the associated adjoint processes solving the adjoint equation (see~\eqref{eq:fb}). Then we have, for Lebesgue-almost every $(u,t)\in I\times [0,T]$,
	$$
		\partial_\alpha H^u(t,X^u_t,\cG^u_t,Y^u_t,Z^u_t,\alpha^u_t)=0, \quad \quad d\PP-a.e..
	$$
\end{proposition}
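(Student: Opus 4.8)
The plan is to exploit \emph{interior} optimality. Since $A$ is open but no longer convex, given a perturbation direction one may move both forward and backward by a small amount, so the G\^ateaux derivative of $J$ produced in Lemma~\ref{thm:j} must \emph{vanish} at the optimizer rather than merely be nonnegative as in Theorem~\ref{thm:nece}. Concretely, if $\beta$ is a bounded perturbation direction for which both $\alpha+\epsilon\beta$ and $\alpha-\epsilon\beta$ lie in $\cM\bH^2_T$ for all small $\epsilon>0$, then $\epsilon\mapsto J(\alpha+\epsilon\beta)$ has an interior minimum at $\epsilon=0$, whence
$$
0=\frac{d}{d\epsilon}J(\alpha+\epsilon\beta)\Big|_{\epsilon=0}=\int_I \EE\Big[\int_0^T \partial_\alpha H^u(t,X^u_t,\cG^u_t,Y^u_t,Z^u_t,\alpha^u_t)\cdot \beta^u_t\,dt\Big]du .
$$
Here the integrand is integrable because $\partial_\alpha b^u,\partial_\alpha\sigma^u$ are bounded and $\partial_\alpha f^u$ has at most linear growth by Assumptions~\ref{ass:c-1}--\ref{ass:c-2}, while Theorem~\ref{thm:exfb} gives $\sup_{u\in I}\big(\|X^u\|^2_{\mathbf S^2}+\|Y^u\|^2_{\mathbf S^2}+\|Z^u\|^2_{\mathbf H^2}\big)<\infty$.

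The delicate point is that openness of $A$ does not by itself make $\alpha\pm\epsilon\beta$ admissible with $\epsilon$ chosen uniformly in $(u,t,\omega)$, since $\alpha^u_t(\omega)$ may approach $\partial A$. I would handle this by localization: fix a bounded direction $\beta$ with $\sup_{u,t}|\beta^u_t|\le M$ and set $E_k=\{(u,t,\omega):\overline B(\alpha^u_t(\omega),1/k)\subset A\}$, which is progressively measurable in $(t,\omega)$ and measurable in $u$ thanks to progressive measurability of $\alpha$ and Borel measurability of $a\mapsto\mathbf 1\{\overline B(a,1/k)\subset A\}$. On $E_k$ one has $\alpha^u_t\pm\epsilon\beta^u_t\in A$ as soon as $\epsilon\le 1/(kM)$, so the vanishing above applies to the perturbation $\beta\,\mathbf 1_{E_k}$. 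Because $A$ is open and $\alpha^u_t(\omega)\in A$ for a.e.\ $(u,t,\omega)$, the sets $E_k$ increase to a set of full measure, and dominated convergence then yields the displayed identity for \emph{every} bounded admissible direction $\beta$.

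It remains to pass from this integral identity to the pointwise conclusion, which I would do exactly as in the second part of the proof of Theorem~\ref{thm:nece}: for an arbitrary $\gamma\in\cM A$, $\cI\in\cB(I)$ and a progressively-measurable set $B\subset[0,T]\times\Omega$, testing with $\beta^u_t=\gamma^u\,\mathbf 1_\cI(u)\,\mathbf 1_B(t,\omega)$ and letting $\cI,B$ vary gives $\partial_\alpha H^u(t,X^u_t,\cG^u_t,Y^u_t,Z^u_t,\alpha^u_t)\cdot\gamma^u=0$ for Lebesgue-a.e.\ $(u,t)$, $d\PP$-a.e. Finally, since $A$ lies in a separable Banach space, I would run this for a countable dense family of directions $(\gamma_j)$, take the union of the (countably many) exceptional null sets, and use the continuity of $\alpha\mapsto\partial_\alpha H^u$ from Assumption~\ref{ass:c-1} to conclude that the continuous functional $\partial_\alpha H^u(t,X^u_t,\cG^u_t,Y^u_t,Z^u_t,\alpha^u_t)$ annihilates a dense set and hence is zero off a single null set. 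The main obstacle is the bookkeeping forced by non-convexity, namely constructing two-sided admissible perturbations with controlled $\epsilon$ via the localizing sets $E_k$ (and checking their measurability), together with the density argument needed to upgrade ``vanishing in each fixed direction'' to ``vanishing as a functional''.
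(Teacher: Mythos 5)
Your proposal is correct and follows essentially the same route as the paper: two-sided perturbations of the optimal control, localized away from $\partial A$ (your sets $E_k$ play exactly the role of the paper's sets $\{\mathrm{dist}(\alpha^u_t,\partial A)>\epsilon_0\}$ with $\epsilon_0=1/k$), tested against directions of the form $\gamma^u\,\mathbf 1_{\cI}(u)\,\mathbf 1_B(t,\omega)$, followed by letting the localization parameter tend to its limit and using openness of $A$. Your explicit countable-dense-family argument to make the exceptional null set independent of the direction is a detail the paper leaves implicit, but it does not change the substance of the argument.
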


\begin{proof}
We prove the result by contradiction. Let $\epsilon_0>0$ and $\gamma\in \cM A$ with $|\gamma^u|=1$ for each $u\in I$. 
Similarly, for any given $\cI\in{\mathcal B}(I)$ and progressively-measurable $B\subset[0,T]\times\Omega$, we define
$$
    \beta^u_t=\gamma^u{\bf 1}_{(u)\in \cI}{\bf 1}_{B\cap \{\mathrm{dist}(\alpha^u_t,\partial A)>\epsilon_0\}},
$$
where $\partial A$ means the boundary of $A$. By construction, $\alpha^u_t+\epsilon\beta^u_t\in A$ for all $t\in[0,T]$ and $\epsilon\in(0,\epsilon_0)$. Then, following the proof of Theorem~\ref{thm:nece}, we have
$\int_0^T \int_I \EE \bigl[\partial_\alpha H^u(t,X^u_t,\cG^u_t,Y^u_t,\alpha^u_t) \cdot \beta^u_t \bigr]dudt\ge 0,$
from which we obtain that outside a Lebesgue null set $\mathfrak{H}$ in $I\times[0,T]$, for every $(u,t)\in I\times[0,T]$, 
$$ 
    {\bf 1}_{\{\mathrm{dist}(\alpha^u_t,\partial A)>\epsilon_0\}}\partial_\alpha H^u(t,X^u_t,\cG^u_t,Y^u_t,\alpha^u_t) \cdot \beta^u \ge 0, \quad \quad   d\PP-a.e..
$$
Moreover, since $(\gamma^u)_{u\in I}$ is arbitrary, by reversing the sign of $\gamma$ and letting $\epsilon_0\to 0$, we get that
$${\bf 1}_{\{\mathrm{dist}(\alpha^u_t,\partial A)>0\}}\partial_\alpha H^u(t,X^u_t,\cG^u_t,Y^u_t,\alpha^u_t)=0, \quad \quad  d\PP-a.e.,$$
for $(u,t)\in I\times [0,T]\setminus\mathfrak{H}$.
Since $A$ is open, we can conclude.
\end{proof}

\subsection{Sufficient condition}
In this subsection, we establish a sufficient condition of optimality, with a few extra assumptions.

\begin{theorem}
	\label{th:suffi}
	Suppose Assumptions~\ref{ass:c-1}--\ref{ass:c-4} hold. Let $\alpha\in\cM \bH^2_T$ be an admissible control profile, $X= X^{\alpha}$ be the corresponding controlled state process, and $( Y, Z)$ the corresponding adjoint processes. Suppose in addition that 
	\begin{enumerate}[topsep=0pt,itemsep=0pt,partopsep=0pt, parsep=0pt]
		\item $\RR^d \times \cP_{2}(\RR^d) \ni (x,\mu)\mapsto g^u(x,\mu)$ is convex for each $u\in I$;
		\item For Lebesgue-a.e. $u\in I$, $\RR^d \times \cP_{2}(\RR^d) \times A \ni (x,\mu,a)\mapsto H^u(t,x,\mu,Y^u_t,Z^u_t,a)$ 
		is convex and
		$H^u(t,X^u_t,\cG^u_t,Y^u_t,Z^u_t, \alpha^u_t)=\inf_{a\in A}H^u(t,X^u_t,\cG^u_t,Y^u_t,Z^u_t,a),$  $ dt\otimes d\PP \;\text{a.e..}$
	\end{enumerate} 
	Then $\alpha$ is an optimal control.
\end{theorem}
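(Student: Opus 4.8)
The plan is to prove that $\alpha$ is optimal by showing $J(\beta)\ge J(\alpha)$ for every admissible $\beta\in\cM\bH^2_T$, bounding $J(\beta)-J(\alpha)$ from below by combining the two convexity hypotheses with the duality relation between the state and the adjoint processes. Let $X^\beta=(X^{\beta,u})_{u\in I}$ denote the state controlled by $\beta$, write $\cG^{\beta,u}_t=[\cG\cL(X^\beta_t)]^u$ for its neighborhood mean field, and set $\delta X^u:=X^{\beta,u}-X^u$, which solves an SDE started at $\delta X^u_0=0$ whose drift and diffusion are the differences of the coefficients evaluated along the two controls. By Theorems~\ref{thm:exf} and~\ref{thm:exfb} all the processes $X,X^\beta,Y,Z$ (hence $\delta X$) lie in $\mathbf{S}^2$ (resp.\ $\mathbf{H}^2$) uniformly in $u$, and the derivatives of $b,\sigma,f,g$ are bounded or of linear growth by Assumption~\ref{ass:c-2}; this guarantees that every expectation and every $\int_I\cdots\,du$ below is finite and that Fubini's theorem applies.

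First I would rewrite the running-cost difference through the Hamiltonian. Using $f^u=H^u-b^u\cdot y-\sigma^u\cdot z$ from~\eqref{fo:hamiltonian}, the integrand $f^u(t,X^{\beta,u}_t,\cG^{\beta,u}_t,\beta^u_t)-f^u(t,X^u_t,\cG^u_t,\alpha^u_t)$ equals a Hamiltonian difference $\Delta H^u$ minus $Y^u_t\cdot\Delta b^u-Z^u_t\cdot\Delta\sigma^u$, where $\Delta b^u,\Delta\sigma^u$ are the coefficient differences. Next I would carry out the integration-by-parts (duality) computation exactly as in Lemma~\ref{le:duality}, but with the variation process $V^u$ replaced by the genuine increment $\delta X^u$ and $\partial_\alpha b^u\cdot\beta^u$ replaced by $\Delta b^u$; applying It\^o's formula to $Y^u_t\cdot\delta X^u_t$ and using the adjoint dynamics~\eqref{eq:fb} produces an expression for $\int_I\EE[Y^u_T\cdot\delta X^u_T]\,du$ in which, after adding the running-cost difference, the $Y^u\cdot\Delta b^u$ and $Z^u\cdot\Delta\sigma^u$ terms cancel. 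What remains is $J(\beta)-J(\alpha)\ge\int_I\EE\int_0^T[\Delta H^u-\partial_xH^u(t,\Phi^u_t)\cdot\delta X^u_t-\int_I\tilde G(v,u)\t\EE[\partial_\mu H^v(t,\Phi^v_t)(X^u_t)]\,dv\cdot\delta X^u_t]\,dt\,du$, where the lower bound comes from treating the terminal cost separately.

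For the terminal cost I would invoke convexity of $(x,\mu)\mapsto g^u(x,\mu)$ in the sense of Section~\ref{sec:lderiva}, applied with the representative variable $\Theta^u=X^{\vartheta^u}$ of $\cG^u_T$: this gives $g^u(X^{\beta,u}_T,\cG^{\beta,u}_T)-g^u(X^u_T,\cG^u_T)\ge\partial_xg^u(X^u_T,\cG^u_T)\cdot\delta X^u_T+\int_I\tilde G(u,v)\t\EE[\partial_\mu g^u(X^u_T,\cG^u_T)(\t X^v_T)\cdot\t{\delta X}^v_T]\,dv$, and integrating in $u$ and matching with the terminal condition of~\eqref{eq:fb} yields $\int_I\EE[g^u(X^{\beta,u}_T,\cG^{\beta,u}_T)-g^u(X^u_T,\cG^u_T)]\,du\ge\int_I\EE[Y^u_T\cdot\delta X^u_T]\,du$, which justifies the bound used above. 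Then convexity of $(x,\mu,a)\mapsto H^u(t,x,\mu,Y^u_t,Z^u_t,a)$ bounds $\Delta H^u$ below by its linearization at $\Phi^u_t$; after a Fubini exchange $u\leftrightarrow v$ in the double integral its $x$- and $\mu$-parts cancel the two remaining terms, leaving $J(\beta)-J(\alpha)\ge\int_I\EE\int_0^T\partial_\alpha H^u(t,\Phi^u_t)\cdot(\beta^u_t-\alpha^u_t)\,dt\,du$. Finally, since $\alpha^u_t$ minimizes the convex map $a\mapsto H^u(t,X^u_t,\cG^u_t,Y^u_t,Z^u_t,a)$ over the convex set $A$, the first-order variational inequality gives $\partial_\alpha H^u(t,\Phi^u_t)\cdot(\beta^u_t-\alpha^u_t)\ge0$ for $dt\otimes d\PP$-a.e.\ $(t,\omega)$, so the right-hand side is nonnegative and $\alpha$ is optimal.

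I expect the main obstacle to be the bookkeeping of the graphon mean-field terms under Fubini. Each $L$-derivative contribution appears twice: once through the adjoint equation with weight $\tilde G(v,u)$, and once through the convexity inequality (for $g$ and for $H$) with weight $\tilde G(u,v)$. Matching them requires both the symmetry exchange $u\leftrightarrow v$ in the double integral over $I\times I$ and the exchange of the \emph{tilde} copy with the original copy, which is valid because they are independent and identically distributed. Getting these two swaps to align so that the $\mu$-linearization terms cancel exactly, while keeping track of at which measure the $L$-derivative is evaluated and at which point it is applied, is the delicate part; the remainder is the standard convex Pontryagin argument.
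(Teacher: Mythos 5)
Your proposal is correct and follows essentially the same route as the paper's proof: rewriting the running cost through the Hamiltonian, applying It\^o/duality to $Y^u\cdot\delta X^u$, using convexity of $g$ together with the terminal condition of the adjoint equation to bound the terminal-cost difference, and then the joint convexity of $H$ in $(x,\mu,a)$ plus the Fubini exchange via the representative variable $\Theta^u$ to reduce everything to $\partial_\alpha H^u\cdot(\beta^u_t-\alpha^u_t)\ge 0$, which follows from the minimality condition. The only difference is cosmetic: you bound $J(\beta)-J(\alpha)$ from below while the paper bounds $J(\alpha)-J(\alpha')$ from above.
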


\begin{proof}
Let $ \alpha'\in\cM\bH^2_T$ be an admissible control, and $ \bX= X^{\alpha'}$ be the corresponding controlled state.
Then we calculate the difference of the objective functions for each $u\in I$:
\begin{align}
		J^u(\alpha)-J^u(\alpha')&= \EE \bigl[g(X^u_T,\cG^u_T)-g(\bX^u_T,\bar{\cG}^u_T) \bigr]+ \EE\int_0^T
		\bigl[f(t,\theta^u_t)-f(t,\o\theta^u_t) \bigr]dt
		\\
		&= \EE \bigl[g(X^u_T,\cG^u_T)-g(\bX^u_T,\bar{\cG}^u_T) \bigr] +\EE\int_0^T \bigl[H^u(t,\Phi^u_t)-H^u(t,\o\Phi^u_t) \bigr]dt
		\\
		&\qquad- \EE\int_0^T \bigl\{ \bigl[b(t,\theta^u_t)-b(t,\o\theta^u_t) \bigr]\cdot Y^u_t + 
		\bigl[\sigma(t,\theta^u_t)-\sigma(t,\o\theta^u_t)]\cdot Z^u_t \bigr\} dt,
	\label{eq:24:1:1}
\end{align}
where $\theta^u_{t} = (X^u_{t},\cG^u_t,\alpha^u_t)$, $\Phi^u_{t} = (X^u_{t},\cG^u_{t},Y^u_{t},Z^u_{t},\alpha^u_{t})$ and similarly for 
$\o\theta_{t}'$ and $\o\Phi^u_{t}$. 
Using the convexity of $g$, we have that
\begin{equation*}
	\begin{split}
		&\EE[g(X^u_T,\cG^u_T)-g(\o X^u_T,\o\cG^u_{T})]
		\\
		&\le \EE\bigl[\partial_xg(X^u_T,\cG^u_{T})\cdot (X^u_T-\bX^u_T) + \t\EE [\partial_\mu g(X^u_T,\cG^u_{T})(\t\Theta^u_T)\cdot (\t\Theta^u_T-\tilde{\bar{\Theta}}^u_T)]\bigr]
		\\
		&= \EE \bigl[ \partial_xg(X^u_T,\cG^u_{T})\cdot (X^u_T-\bX^u_T) +\int_I \tilde{G}(u,v) \t\EE[\partial_\mu g( X^u_T, \cG^u_{T})(\t X^v_T)  \cdot (\t X^v_T-\t{\bX}^v_T)]dv\bigr],
	\end{split}
\end{equation*}
where the equality comes from the characterization of $\cG^u_T$. Again, integrating over $I$ and using Fubini's theorem, we have
\begin{align}\label{fo:g}
	\int_I \EE[g(X^u_T,\cG^u_T)-g(\o X^u_T,\o\cG^u_{T})]du
	\le \int_I\EE \bigl[(X^u_T-\bX^u_T)\cdot Y^u_T \bigr]du.
\end{align}
Then using the adjoint equation and taking the expectation, we obtain:
\begin{equation*}
	\begin{split}
		&\int_I\EE \bigl[ (X^u_T-\bX^u_T)\cdot Y^u_T \bigr]du 
		\\
		&=- \int_I\EE \biggl[\int_0^T
		\partial_xH^u(t,\Phi^u_{t}) \cdot (X^u_t-\bX^u_t) + \int_I \tilde{G}(v,u)
		\t\EE \bigl[ \partial_\mu H^u(t,\t \Phi^v_t)(X^u_t) \cdot ( X^u_t-\bX^u_t)  \bigr]dv dt\biggr]du
		\\
		&\qquad\qquad+ \int_I \EE \int_0^T 
		\left[
		[b(t,\theta^u_t)-b(t,\o\theta^u_t)] \cdot Y^u_{t}   + [\sigma(t,\theta^u_t)-\sigma(t,\o\theta^u_t)]\cdot  Z^u_t \right] dt du.
	\end{split}
\end{equation*}
Notice that:
\begin{align}
		&\int_I\EE \bigg[\int_I \tilde{G}(v,u) \t\EE \bigl[ \partial_\mu H^v(t,\t \Phi^v_t)(X^u_t)  \cdot (X^u_t-\bX^u_t)\bigr]dv\bigg]du 
		\\
		&
		=\EE \t\EE \bigl[ \partial_\mu H^u(t,\Phi^u_t)(\t \Theta^u_t)\cdot (\t\Theta^u_t - \t{\bar{\Theta}}^u_t) \bigr] dt.
	\label{eq:24:1:2}
\end{align}
Then, by \eqref{eq:24:1:1}, \eqref{fo:g}, \eqref{eq:24:1:2}, the second condition in the statement, and the convexity of $H$,
\begin{equation*}
	\begin{split}
		&J(\alpha)-J(\alpha')
		\\
		&\le \int_I\Bigl\{\EE\Bigl[\int_0^T[H^u(t,\Phi^u_t)-H^u(t,\o\Phi^u_t)]dt\Bigl]
		\\
		&\quad-\EE\Bigl[\int_0^T \left\{ \partial_xH^u(t,\Phi^u_t) \cdot (X^u_t-\bX^u_t) + 
		\t\EE \bigl[ \partial_\mu H^u(t, \Phi^u_t)(\t\Theta^u_t) \cdot (\t \Theta^u_{t} - \t{\bar{\Theta}}^u_{t} ) \bigr] \right\} dt\Bigr]\Bigr\}du,
	\end{split}
\end{equation*}
which is non-positive. Thus we obtain $J(\alpha)\leq J(\alpha')$ for any $\alpha'\in\cM\bH^2_T$.
\end{proof}

\section{Properties of the Optimal FBSDE System}
\label{sec:fbsde}

In this section, we study the graphon FBSDE system under optimal controls resulting from the application of the Pontryagin stochastic maximum principle proved above. We establish the existence and uniqueness of the optimally controlled graphon FBSDE system \eqref{eq:fbo} by the {\it continuation method}. Next, we study two important properties of the system \eqref{eq:fbo}, including continuity and stability. These properties play a crucial role to obtain the approximate optimality results of Section~\ref{sec:chaos}.

\subsection{Setting and assumptions}\label{sec:5.1}
For simplicity, we study the case where the (forward) dynamics are linear. %
In the rest of this section and in the following section, we suppose the following assumption holds.
\begin{assumption}
    \label{asm:Afull-linear}
    $A=\RR^k$ and for each $u\in I$, the drift $b^u$ and the volatility $\sigma^u$ are affine in $x$, $\bar\mu = \int x \mu(dx)$,  and $\alpha$, namely,
		$b^u(t,x,\mu,\alpha) = b^u_{0}(t) + b^u_{1}(t)x + b^u_{2}(t)\o\mu + b^u_{3}(t) \alpha,$ 
    		$\sigma^u(t,x,\mu,\alpha) = \sigma^u_{0}(t) + \sigma^u_{1}(t)x + \sigma^u_{2}(t)\o\mu + \sigma^u_{3}(t) \alpha,$ 
    where $b^u_{0}$, $b^u_{1}$, 
    $b^u_{2}$ and $b^u_{3}$ are bounded measurable functions with values in $\RR^d$, $\RR^{d \times d}$, $\RR^{d \times d}$ and $\RR^{d \times k}$
    and, $\sigma^u_{0}$, $\sigma^u_{1}$, $\sigma^u_{2}$ and $\sigma^u_{3}$ are bounded and measurable with values in $\RR^{d \times m}$, 
    $\RR^{(d \times m) \times d}$, $\RR^{(d \times m) \times d}$ and $\RR^{(d \times m) \times k}$.
\end{assumption} 
The parentheses $(d\times m)$ mean that for any $\sigma\in \RR^{(d \times m) \times d}$ and $x\in\RR^d$, $\sigma x \in \RR^{d\times m}$. Notice that since the dynamics is linear, Assumptions~\ref{ass:dynamic-2} and~\ref{ass:dynamic-3} are automatically satisfied. We state other conditions that will be used in this section.

\begin{assumption}%
\label{ass:d}
\begin{enumerate}[label=\normalfont{\textbf{(\ref*{ass:d}\arabic{*})}}, ref=\normalfont{\textbf{(\ref*
{ass:d}\arabic{*})}}, topsep=2pt,itemsep=0pt,partopsep=0pt, parsep=0pt]
    \item \label{ass:d--1} Assumptions~\ref{ass:dynamic-1}, \ref{ass:dynamic-4}-\ref{ass:dynamic-5} hold and the mapping $I\times[0,T] \ni (u,t) \mapsto (b^u_0,b^u_1,b^u_2,b^u_3,\sigma^u_0,\sigma^u_1,\sigma^u_2,\sigma^u_3)(t)$ is jointly measurable and bounded. For each $u\in I$, $f^u$ and $g^u$ satisfy the same assumptions as in~\ref{ass:c-1}, \ref{ass:c-2} and~\ref{ass:c-4} in Section~\ref{sec:pontryagin}. In particular, there exists a constant $L$ such that for all $u\in I$,
    \begin{equation*}
    	\begin{split}
    		&\bigl\vert f^u(t,x',\mu',\alpha') - f^u(t,x,\mu,\alpha) \bigr\vert + \bigl\vert g^u(x',\mu') - g^u(x,\mu) \bigr\vert
    		\\
    		&\hspace{15pt} \leq L \bigl[ 1 + \vert x' \vert + \vert x \vert + \vert \alpha' \vert +
    		\vert \alpha \vert + \|  \mu  \|_{2} + \|\mu' \|_{2} \bigr] \bigl[ \vert (x',\alpha') - (x,\alpha) \vert +
    		W_{2}(\mu',\mu) \bigr].
    	\end{split}
    \end{equation*}  
    \item \label{ass:d--2} For all $u\in I$, there exists a constant $c>0$ such that the derivatives of $f^u$ and $g^u$ 
    w.r.t. $(x,\alpha)$ and $x$ respectively
    are $c$-Lipschitz continuous with respect to $(x,\alpha,\mu)$ and $(x,\mu)$ respectively. For any
    $t \in [0,T]$, any  
    $x,x' \in \RR^d$, any $\alpha,\alpha' \in \RR^k$, any 
    $\mu,\mu' \in {\mathcal P}_{2}(\RR^d)$ and 
    any $\RR^d$-valued random variables $X$ and $X'$ having $\mu$ and $\mu'$ as respective distributions, we have
    \begin{equation*}
    	\begin{split}
    		&\EE \bigl[ \vert \partial_{\mu} f^u(t,x',\mu',\alpha')(X') - \partial_{\mu} f^u(t,x,\mu,\alpha)(X) \vert^2 \bigr]
    		\\
    		&\qquad\qquad
    		\leq c \bigl( \vert (x',\alpha') - (x,\alpha) \vert^2 +
    		\EE \bigl[ \vert X'- X \vert^2\bigr] \bigr),
    		\\
    		&\EE \bigl[ \vert \partial_{\mu} g^u(x',\mu')(X') - \partial_{\mu} g^u(x,\mu)(X) \vert^2 \bigr] 
    		\leq c \bigl( \vert x'-x \vert^2 + \EE \bigl[ \vert X'- X \vert^2 \bigr] \bigr). 
    	\end{split}
    \end{equation*}
    \item \label{ass:d--3} There exists $\lambda>0$ such that: For each $u\in I$ and $t\in [0,T]$, the function $(x,\mu,\alpha)\mapsto f^u(t,x,\mu,\alpha)$ is strongly convex in the sense that for all $t,x,\mu,x',\mu',\alpha$ and $\alpha'$, 
    \begin{align*}
    		&f^u(t,x',\mu',\alpha') - f^u(t,x,\mu,\alpha) 
    		\\
    		&\hspace{15pt} - \partial_{(x,\alpha)} f^u(t,x,\mu,\alpha)\cdot (x'-x,\alpha'-\alpha)
    		- \tilde{\mathbb E} \bigl[\partial_{\mu} f^u(t,x,\mu,\alpha)(\t X) \cdot (\tilde{X}' - \tilde{X}) \bigr]
    		\\ 
    		&\geq \lambda 
    		\vert \alpha' - \alpha \vert^2.
        \setcounter{counterExtraAssumptions}{\value{enumi}}
    \end{align*} 
    For every $u \in I$, the function $g^u$ is convex in $(x,\mu)$.
\setcounter{counterExtraAssumptions}{\value{enumi}}
\end{enumerate}
\end{assumption}

These assumptions are analogous to assumptions for classical FBSDEs, see \cite[Assumption B]{carmona2015forward}.
Under Assumptions~\ref{ass:d--1}--\ref{ass:d--3}, given $(t,x,\mu,y,z) \in [0,T] \times \RR^d \times {\mathcal P}_{2}(\RR^d) \times \RR^d \times \RR^{d \times m}$, by definition, for each $u\in I$, the function $\RR^k \ni \alpha \mapsto H^u(t,x,\mu,y,z,\alpha)$ is strictly convex so that 
there exists a unique minimizer:
\begin{equation}
	\label{fo:alphahat}
	\hat{\alpha}^u(t,x,\mu,y,z) = \textrm{argmin}_{\alpha \in A} H^u(t,x,\mu,y,z,\alpha),
\end{equation}
which is the unique root of  $\alpha\mapsto\partial_{\alpha} H^u(t,x,\mu,y,z,\alpha)$. In addition, for each fixed $u \in I$, following the analysis in \cite{carmona2015forward}, we know that
the mapping $[0,T] \times \RR^d \times {\mathcal P}_{2}(\RR^d) \times \RR^d \times \RR^{d \times m}
\ni (t,x,\mu,y,z)   \mapsto \hat{\alpha}^u(t,x,\mu,y,z)$ is measurable, locally bounded and Lipschitz-continuous with respect to 
$(x,\mu,y,z)$, uniformly in $t\in [0,T]$, the Lipschitz constant depending only on the uniform supremum norm of $b^u_3,\sigma^u_3$, the Lipschitz constant of $\partial_\alpha f^u$ in $(x,\mu,\alpha)$, and $\lambda$.

We define the following graphon mean field FBSDE system:
\begin{equation}
	\label{eq:fbo}
	\left\{
	\begin{aligned}
		&	dX^u_t= b^u(t,X^u_t,\cG^u_t,\h\alpha^u_t)dt 
		+\sigma^u(t,X^u_t,\cG^u_t,\h\alpha^u_t)dW^u_t,\\
		&dY^u_t=-\partial_xH^u(t,X^u_t,\cG^u_t,Y^u_t,Z^u_t,\h\alpha^u_t)dt + Z^u_t dW^u_t\\
		&\textstyle\qquad\qquad-\int_I\tilde{G}(v,u)\t\EE[\partial_\mu H^v(t,\t X^v_t,\cG^v_t,\t Y^v_t,\t Z^v_t,\t{\h\alpha}^v_t)(X^u_t)]dvdt,\\
		&\textstyle Y^u_T= \partial_xg(X^u_T,\cG^u_T)+\int_I\tilde{G}(v,u)\t\EE[\partial_\mu g(\t X^v_T,\cG^v_T)(X^u_T)]dv, \quad X^u_0= \xi^u,\quad u\in I ,
	\end{aligned}
	\right.
\end{equation}
where $\hat\alpha^u_t=\hat\alpha^u(t,X^u_t,\cG^u_t,Y^u_t,Z^u_{t})$ with $\hat\alpha^u$ the minimizer of $H^u$ constructed in \eqref{fo:alphahat}, and $\t{\h \alpha}^u_t=\hat{\alpha}^u(t,\t X^u_t,\cG^u_t,\t Y^u_t,\t Z^u_t)$. 
Note that $(\h\alpha^u(t,X^u_t,\cG^u_t,Y^u_t,Z^u_{t}))_{t\in[0,T],u\in I} \in \cM\bH^2_T$, see Remark~\ref{rem:hat-alpha-measurable}. %

\subsection{Solvability}

We are now ready to study the existence and uniqueness of the above FBSDE \eqref{eq:fbo}. We will follow the same idea as in the classical case, see \cite{carmona2015forward} for details. It consists in using an adaptation of the so-called \emph{continuation method}, which was first used in \cite{peng1999cm} for handling fully coupled FBSDEs, and also used in \cite{wu2022} for obtaining the existence and uniqueness of graphon mean field FBSDEs in simpler form without controls. 
Generally speaking, the method consists in proving that the existence and uniqueness can be preserved when the coefficients in the original FBSDE
are slightly perturbed, starting from an initial simple case for which the existence and uniqueness already holds. 

Recall that $(\Phi^u_{t})_{0 \leq t \leq T}$ stands for a tuple of processes $(X^u_{t},\cG^u_t,Y^u_{t},Z^u_{t},\alpha^u_{t})_{0 \leq t \leq T}$ with values in $\RR^d \times {\mathcal P}_{2}(\RR^d)  \times \RR^d \times \RR^{d\times m} \times \RR^k$. 
Let us denote by $\mathcal{S}_1$ the space of tuples of processes 
$(\Phi^u_{t})_{0 \leq t \leq T,u\in I}$ such that for each $u\in I$,  $(X^u_{t},Y^u_{t},Z^u_{t},\alpha^u_{t})_{0 \leq t \leq T}$ is $\FF^u$-progressively measurable, $(X^u_{t})_{0 \leq t \leq T}$ and $(Y^u_{t})_{0 \leq t \leq T}$ have continuous trajectories, $I\ni u\mapsto \cL(X^u_t,Y^u_t,Z^u_t,\alpha^u_t)$ is measurable for every $t \in [0,T]$, and the following norm satisfies 
\begin{equation}
	\label{eq:norm S1}
	\| \Phi \|_{\mathcal{S}_1}:=
	\biggl(\int_I\EE \Big[ \sup_{0 \leq t \leq T} \bigl[ 
	\vert X^u_{t} \vert^2 + \vert Y^u_{t} \vert^2 \bigr] + \int_{0}^T \big[ \vert Z^u_{t} \vert^2 + \vert \alpha^u_{t}\vert^2 \big] dt
	\Big]du\biggr)^{1/2} < +\infty. 
\end{equation}
We also define its subspace $\mathcal{S}_2$ of tuples satisfying the same conditions but with
\begin{equation}
	\label{eq:norm S}
	\| \Phi \|_{\mathcal{S}_2} :=
	\sup_{u\in I} \left(\EE \biggl[ \sup_{0 \leq t \leq T} \bigl[ 
	\vert X^u_{t} \vert^2 + \vert Y^u_{t} \vert^2 \bigr] + \int_{0}^T \bigl[ \vert Z^u_{t} \vert^2 + \vert \alpha^u_{t}\vert^2 \bigr] dt
	\biggr]\right)^{1/2} < +\infty. 
\end{equation}
Moreover, we use the notation 
$(\theta^u_{t})_{0 \leq t \leq T} := (X^u_{t},\cG^u_t,\alpha^u_{t})_{0 \leq t \leq T}$.

We call a family of four-tuple ${\mathcal I}=(
({\mathcal I}^{b,u}_{t},
{\mathcal I}^{\sigma,u}_{t},{\mathcal I}^{f,u}_{t})_{0 \leq t \leq T},{\mathcal I}^{g,u})_{u\in I}$ an \emph{input} for \eqref{eq:fbo}, where for each $u\in I$,  $({\mathcal I}^{b,u}_t)_{0 \leq t \leq T}$, 
$({\mathcal I}^{\sigma,u}_t)_{0 \leq t \leq T}$ and $({\mathcal I}^{f,u}_t)_{0 \leq t \leq T}$ are three $\FF^u$-progressively measurable processes with values in $\RR^d$, $\RR^{d \times m}$ and $\RR^d$ respectively, and
${\mathcal I}^{g,u}$ denotes a ${\mathcal F}^u_{T}$-measurable random variable with values in $\RR^d$, satisfying the condition that 
$$
    I\ni u\mapsto \cL(({\mathcal I}^{b,u}_{t},
    {\mathcal I}^{\sigma,u}_{t},{\mathcal I}^{f,u}_{t})_{0 \leq t \leq T},{\mathcal I}^{g,u})
$$ is measurable. For convenience, for the tuple $\cI^u$ of any label $u\in I$, we define the norm 
\begin{equation*}
	\| {\mathcal I}^u \|_{\mathbb I} :=
	\left(\EE \biggl[ \vert {\mathcal I}^{g,u} \vert^2 + \int_{0}^T \bigl[ \vert {\mathcal I}^{b,u}_{t} \vert^2 + \vert {\mathcal I}^{\sigma,u}_{t} \vert^2 + \vert {\mathcal I}^{f,u}_{t} \vert^2 \bigr] dt\biggr]\right)^{1/2}. 
\end{equation*}
We denote by ${\mathbb I}_1$ (resp.  $\mathbb{I}_2$) the subspace of inputs $\mathcal I$ satisfying:
\begin{equation}
	\label{eq:norm I}
    \| {\mathcal I} \|_{\mathbb I_1} :=
	\left(\int_I \EE \| {\mathcal I}^u \|_{\mathbb I} du
	\right)^{1/2}< \infty \qquad \left(\hbox{resp. } \| {\mathcal I} \|_{\mathbb I_2} :=
	\sup_{u\in I} \| {\mathcal I}^u \|_{\mathbb I} < \infty\right).
\end{equation}

Then we define the following perturbed FBSDE with given input.

\begin{definition}
	\label{def:fbi}
	For any $\gamma\in [0,1]$, any  
	$\xi=(\xi^u )_{u\in I}$ satisfying Assumption~\ref{ass:dynamic-1} and \ref{ass:dynamic-4} and any input ${\mathcal I} \in {\mathbb I}_1$, we define the following FBSDE 
	\begin{equation}
		\label{eq:fbi}
		\left\{
		\begin{aligned}
			&dX^u_{t} = \bigl( \gamma b^u(t,\theta^u_{t}) + \mathcal{I}^{b,u}_{t} \bigr) dt + \bigl( \gamma \sigma^u(t,\theta^u_{t}) + \mathcal{I}^{\sigma,u}_{t} \bigr) dW^u_{t},
			\\
			&\textstyle dY^u_{t} = - \bigl( \gamma \bigl\{ \partial_{x} H^u(t,\Phi^u_{t}) +\int_I \tilde{G}(v,u)
			\t \EE \bigl[ \partial_{\mu} H^v(t,\tilde{ \Phi}^v_{t})(X^u_{t}) \bigr]dv \bigr\}  + \mathcal{I}^{f,u}_{t}\bigr) dt
			+ Z^u_{t} dW^u_{t}, \\
			&\textstyle Y^u_T= \gamma\bigl\{\partial_xg^u(X^u_T,\cG^u_T)+\int_I\tilde{G}(v,u)\t\EE[\partial_\mu g^v(\t X^v_T,\cG^v_T)(X^u_T)]du\bigr\}+\mathcal{I}^{g,u}, \; X^u_0= \xi^u,\; u\in I ,
		\end{aligned}
		\right.
	\end{equation}
	where for each $u\in I$,
		$\alpha^u_{t} = \hat{\alpha}^u(t,X^u_{t},\cG^u_t,Y^u_{t},Z^u_{t}),$ $t \in [0,T].$ 
	   This FBSDE will be denoted by ${\mathcal E}(\gamma,\xi,{\mathcal I})$.
	If $(X^u_{t},Y^u_{t},Z^u_{t})_{0 \leq t \leq T,u\in I}$ is a solution of ${\mathcal E}(\gamma,\xi,{\mathcal I})$, we call $(X^u_{t},\cG^u_t,Y^u_{t},Z^u_{t},\alpha^u_{t})_{0 \leq t \leq T,u\in I}$ the \emph{associated extended solution}.
\end{definition}

Now, we establish the following result about solutions of  ${\mathcal E}(\gamma,\xi,{\mathcal I})$.

\begin{lemma}
	\label{lem:contra}
	Let $\gamma \in [0,1]$ and suppose ${\mathcal E}(\gamma,\xi,{\mathcal I})$ has a unique extended solution for any $\xi=(\xi^u)_{u\in I}$ satisfying Assumptions~\ref{ass:dynamic-1} and \ref{ass:dynamic-4} and any $\cI\in \mathbb{I}_1$. Then under Assumptions~\ref{ass:d--1}-\ref{ass:d--3}, there exists a constant $C$, independent of 
	$\gamma$, such that for any $\xi,\xi'$ satisfying Assumption~\ref{ass:dynamic-1} and \ref{ass:dynamic-4}, and any ${\mathcal I},{\mathcal I}' \in {\mathbb I}_1$, the corresponding extended solutions $\Phi$ and $\Phi'$ of ${\mathcal E}(\gamma,\xi,{\mathcal I})$ and ${\mathcal E}(\gamma,\xi',{\mathcal I}')$ satisfy
	\begin{equation*}
		\| \Phi - \Phi' \|_{\mathcal{S}_1} \leq C \Big( \left[ \int_I {\mathbb E}\vert \xi^u - \xi'^{,u} \vert^2du \right]^{1/2}
		+ \| {\mathcal I} - {\mathcal I}' \|_{\mathbb I_1} \Big).  
	\end{equation*}
\end{lemma}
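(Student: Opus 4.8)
The plan is to reproduce, in the graphon setting, the a priori stability estimate that underpins the continuation method for classical mean field FBSDEs (\cite[Chapter 4]{carmonabook1}, \cite{carmona2015forward}); the $L^1$-in-$u$ structure of the norms $\|\cdot\|_{\mathcal{S}_1}$ and $\|\cdot\|_{\mathbb I_1}$ is precisely what makes the graphon interaction terms tractable through Fubini's theorem. Write $\Delta X^u = X^u - X'^{,u}$, and likewise $\Delta Y^u$, $\Delta Z^u$, $\Delta\alpha^u$, $\Delta\xi^u$, $\Delta\mathcal{I}^{b,u}$, $\Delta\mathcal{I}^{\sigma,u}$, $\Delta\mathcal{I}^{f,u}$, $\Delta\mathcal{I}^{g,u}$ for the corresponding differences between the two extended solutions and their data. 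First I would apply It\^o's formula to the duality pairing $\Delta X^u_t\cdot\Delta Y^u_t$, integrate over $[0,T]$, take expectations, and integrate over $u\in I$. Since $\Delta X^u_0 = \Delta\xi^u$, this expresses $\int_I\EE[\Delta X^u_T\cdot\Delta Y^u_T]\,du$ in terms of $\int_I\EE[\Delta\xi^u\cdot\Delta Y^u_0]\,du$, the input increments paired against $\Delta X$, $\Delta Y$, $\Delta Z$, and a running term built from $\gamma$ times the increments of the Hamiltonian gradients.

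The heart of the argument is to convert this identity into a coercive estimate on $\Delta\alpha$ by exploiting convexity. The terminal pairing is bounded from below using the convexity of $g^u$ (Assumption~\ref{ass:d--3}); the running term uses the strong convexity of $f^u$ in $\alpha$ with modulus $\lambda$, which by Assumption~\ref{asm:Afull-linear} (affine $b^u,\sigma^u$) transfers to $H^u$ in $(x,\mu,\alpha)$, together with the stationarity $\partial_\alpha H^u=0$ at the minimizer $\hat\alpha^u$ from~\eqref{fo:alphahat}. In both the terminal and running parts, the coupling terms $\int_I\tilde{G}(v,u)\t\EE[\partial_\mu(\cdots)(X^u)]\,dv$ are reorganized by the same Fubini swap used in Lemma~\ref{le:duality} and~\eqref{eq:24:1:2}, which recasts the iterated $\tilde{G}(v,u)$-integral as an $L$-derivative pairing against the representative variable. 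After adding the inequality obtained by exchanging the two solutions, this produces a monotonicity bound of the form
\begin{equation*}
\gamma\lambda\int_I\EE\int_0^T|\Delta\alpha^u_t|^2\,dt\,du \;\le\; \int_I\EE[\Delta\xi^u\cdot\Delta Y^u_0]\,du + (\text{terms linear in } \Delta\mathcal{I}).
\end{equation*}

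In parallel I would derive the standard forward and backward a priori estimates for~\eqref{eq:fbi}. A Gronwall argument on the forward equation controls $\|\Delta X\|$ by $\|\Delta\xi\|$, $\gamma\|\Delta\alpha\|$ and $\|\Delta\mathcal{I}^{b}\|,\|\Delta\mathcal{I}^{\sigma}\|$, where the graphon mean field increment measured in $\cW_2$ is dominated by $\int_I\EE|\Delta X^v_t|^2\,dv$ thanks to Assumption~\ref{ass:graphon} and the $L^1$-in-$u$ norm. A BSDE estimate controls $\|\Delta Y\|,\|\Delta Z\|$ by $\|\Delta\mathcal{I}^{g}\|,\|\Delta\mathcal{I}^{f}\|$, $\gamma\|\Delta X\|$ and $\gamma\|\Delta\alpha\|$; and since $\hat\alpha^u$ is Lipschitz (see the discussion following~\eqref{fo:alphahat}), $\|\Delta\alpha\|$ is in turn controlled by $\|\Delta X\|,\|\Delta Y\|,\|\Delta Z\|$. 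Because $\gamma\in[0,1]$, all these constants are uniformly bounded. Combining the monotonicity bound with these estimates through Young's inequality should close the system and yield $\|\Phi-\Phi'\|_{\mathcal{S}_1}\le C(\int_I\EE|\Delta\xi^u|^2\,du)^{1/2}+C\|\Delta\mathcal{I}\|_{\mathbb I_1}$.

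The main obstacle is obtaining the constant $C$ \emph{uniformly in} $\gamma$. The monotonicity estimate only provides coercivity of order $\gamma\lambda$, which degenerates as $\gamma\to 0$, whereas the naive combination of the forward/backward estimates with the Lipschitz bound on $\hat\alpha$ is circular through a factor $\gamma^2$ and closes only for small $\gamma$. The resolution is to split the range of $\gamma$ at a threshold determined solely by the forward/backward/Lipschitz constants: below the threshold the circular estimate closes on its own, while above it the $\gamma\lambda$ coercivity is bounded below by a fixed positive multiple of $\lambda$. A secondary technical point is to verify that each Fubini relabeling is legitimate, which rests on the measurability of $u\mapsto\cL(X^u_t,Y^u_t,Z^u_t,\alpha^u_t)$ guaranteed by Theorem~\ref{thm:exfb} and the definition of an extended solution.
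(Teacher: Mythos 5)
Your proposal is correct and follows essentially the same route as the paper's proof: It\^o's formula applied to $\Delta X^u_t\cdot\Delta Y^u_t$, Fubini relabelings of the $\tilde G(v,u)$-integrals, convexity of $g^u$ and $\lambda$-strong convexity of $f^u$ (hence of $H^u$, by affineness of $b^u,\sigma^u$) combined with $\partial_\alpha H^u(\cdot,\hat\alpha^u)=0$ and symmetrization to get the monotonicity bound $\gamma\lambda\|\Delta\alpha\|^2\le\varepsilon\|\Delta\Phi\|^2_{\mathcal S_1}+C_\varepsilon(\cdots)$, then Gronwall-type forward/backward estimates and the Lipschitz property of $\hat\alpha^u$. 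The one place you diverge is the final bookkeeping: the case split on $\gamma$ is a valid way to close the loop, but it is not needed, because the forward and backward a priori estimates produce $\|\Delta\alpha\|^2$ only with a prefactor $\gamma$ (the perturbed coefficients are $\gamma b^u$, $\gamma\sigma^u$, $\gamma\partial_xH^u$, \ldots), which is exactly the quantity the monotonicity inequality controls; the paper therefore absorbs $\gamma\|\Delta\alpha\|^2$ directly for all $\gamma\in[0,1]$ and only invokes the Lipschitz bound on $\hat\alpha^u$ at the very end to recover the $\|\Delta\alpha\|^2$ component of the $\mathcal S_1$-norm from the already-controlled $\Delta X,\Delta Y,\Delta Z$ (together with Lemma~\ref{lem:cg} for the $\mathcal W_2$ term), after which a single small choice of $\varepsilon$ suffices.
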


The proof follows standard estimation techniques for mean field FBSDEs (see e.g. \cite{lauriere2022convergence}), together with a special dealing with the graphon mean field term under convexity arguments of cost functions. Compared to the classical mean field one, the difficulty lies in handling the graphon mean field parameter $\cG$ and estimating terms involving the partial derivative with respect to $\cG$. We provide the proof in Appendix~\ref{app:details-fbsde} for completeness. We then prove:

\begin{theorem}
	\label{th:exo}
	Under Assumptions~\ref{ass:d--1}-\ref{ass:d--3},  \eqref{eq:fbo} is uniquely solvable in $\mathcal{S}_1$. 
\end{theorem}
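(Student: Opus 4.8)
The plan is to establish \eqref{eq:fbo} by the \emph{continuation method}, exactly as the family $\mathcal{E}(\gamma,\xi,\mathcal{I})$ of Definition~\ref{def:fbi} is designed to allow (cf.\ \cite{peng1999cm,carmona2015forward,wu2022}). The idea is to prove the statement
$$
(\mathcal{H}_\gamma):\quad \mathcal{E}(\gamma,\xi,\mathcal{I}) \text{ has a unique extended solution in } \mathcal{S}_1 \text{ for every admissible } \xi \text{ and every } \mathcal{I}\in\mathbb{I}_1,
$$
to verify that $(\mathcal{H}_0)$ holds, and to exhibit a step $\delta_0>0$, \emph{independent of} $\gamma$, such that $(\mathcal{H}_\gamma)\Rightarrow(\mathcal{H}_{\gamma+\delta})$ for all $\delta\in[0,\delta_0]$. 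Iterating from $\gamma=0$ then reaches $\gamma=1$ in finitely many steps, and $(\mathcal{H}_1)$ applied with $\mathcal{I}=0$ is precisely the theorem. For the base case $\gamma=0$, the system \eqref{eq:fbi} decouples: the forward part is $dX^u_t=\mathcal{I}^{b,u}_tdt+\mathcal{I}^{\sigma,u}_tdW^u_t$ with $X^u_0=\xi^u$, solved by direct integration, while the backward part $dY^u_t=-\mathcal{I}^{f,u}_tdt+Z^u_tdW^u_t$, $Y^u_T=\mathcal{I}^{g,u}$, is a standard linear BSDE solved label by label by the martingale representation theorem. Membership in $\mathcal{S}_1$ follows from $\mathcal{I}\in\mathbb{I}_1$ and the integrability of $\xi$, and the $u$-measurability of $\mathcal{L}(X^u_t,Y^u_t,Z^u_t,\alpha^u_t)$ follows from the measurability of the law of $\mathcal{I}$ built into the notion of input, together with the measurability of $\hat\alpha^u$ from \eqref{fo:alphahat}.

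For the continuation step, assume $(\mathcal{H}_\gamma)$ and fix $\delta\in[0,\delta_0]$, $\xi$ and $\mathcal{I}$. I would introduce a map $\Psi$ on $\mathcal{S}_1$ sending $\Phi$ to the extended solution $\Psi(\Phi)$ of $\mathcal{E}(\gamma,\xi,\mathcal{I}+\delta\,\cJ(\Phi))$, where $\cJ(\Phi)$ freezes the genuine coefficients at $\Phi$, namely $\cJ^{b}(\Phi)^u_t=b^u(t,\theta^u_t)$, $\cJ^{\sigma}(\Phi)^u_t=\sigma^u(t,\theta^u_t)$, $\cJ^{f}(\Phi)^u_t=\partial_xH^u(t,\Phi^u_t)+\int_I\tilde G(v,u)\,\t\EE[\partial_\mu H^v(t,\t\Phi^v_t)(X^u_t)]dv$, and $\cJ^{g}(\Phi)^u=\partial_xg^u(X^u_T,\cG^u_T)+\int_I\tilde G(v,u)\,\t\EE[\partial_\mu g^v(\t X^v_T,\cG^v_T)(X^u_T)]dv$. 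By construction a fixed point $\Phi=\Psi(\Phi)$ is exactly an extended solution of $\mathcal{E}(\gamma+\delta,\xi,\mathcal{I})$, and conversely; so $(\mathcal{H}_{\gamma+\delta})$ reduces to showing $\Psi$ has a unique fixed point. The linear growth of the affine coefficients and the local boundedness of $\hat\alpha$ ensure $\cJ(\Phi)\in\mathbb{I}_1$, while the joint measurability of Assumption~\ref{ass:d--1} and the measurability of $\hat\alpha$ preserve $u$-measurability of laws, so $\Psi$ maps $\mathcal{S}_1$ into itself.

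To get a contraction I would apply Lemma~\ref{lem:contra} to the two inputs $\mathcal{I}+\delta\,\cJ(\Phi_1)$ and $\mathcal{I}+\delta\,\cJ(\Phi_2)$ (same $\xi$), yielding $\|\Psi(\Phi_1)-\Psi(\Phi_2)\|_{\mathcal{S}_1}\le C\delta\,\|\cJ(\Phi_1)-\cJ(\Phi_2)\|_{\mathbb{I}_1}$ with $C$ independent of $\gamma$. It then remains to bound $\|\cJ(\Phi_1)-\cJ(\Phi_2)\|_{\mathbb{I}_1}\le K\|\Phi_1-\Phi_2\|_{\mathcal{S}_1}$, using the affine form of $b^u,\sigma^u$, the Lipschitz continuity of $\partial_xH,\partial_\mu H,\partial_xg,\partial_\mu g$ from Assumptions~\ref{ass:d--1}--\ref{ass:d--2}, the Lipschitz continuity of $\hat\alpha$ in $(x,\mu,y,z)$ recorded after \eqref{fo:alphahat}, and the graphon bound of Assumption~\ref{ass:graphon} (through Fubini for the $\int_I\tilde G(v,u)\,dv$ terms). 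Since $C$ and $K$ are both independent of $\gamma$, choosing $\delta_0=1/(2CK)$ makes $\Psi$ a $1/2$-contraction on $\mathcal{S}_1$, so the Banach fixed point theorem delivers existence and uniqueness, i.e.\ $(\mathcal{H}_{\gamma+\delta})$.

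The main obstacle is the final Lipschitz estimate on $\cJ$, specifically the terms carrying $\partial_\mu H$ and $\partial_\mu g$ integrated against $\tilde G(v,\cdot)$ and evaluated at the graphon mean field $\cG$ and the frozen control $\hat\alpha$: these simultaneously involve differentiating through the measure argument, the heterogeneous graphon averaging, and the merely Lipschitz (not smooth) optimal-control map, and they must be controlled in the $u$-integrated $\mathbb{I}_1$ norm rather than the uniform $\mathbb{I}_2$ norm. Controlling the $\cG$-dependence — bounding the Wasserstein and $\partial_\mu$ increments by $L^2$-in-$u$ norms of $X_1-X_2$ and re-integrating in $u$ via Fubini and Assumption~\ref{ass:graphon} — is where the graphon structure genuinely departs from the classical mean field case. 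As the discussion after Lemma~\ref{lem:contra} indicates, however, most of this technical weight is already carried by that lemma, which keeps the continuation argument itself relatively clean.
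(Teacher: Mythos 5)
Your proposal is correct and follows essentially the same route as the paper: the continuation method with the map $\Psi$ defined by freezing the coefficients into the input $\mathcal{I}+\delta\,\cJ(\Phi)$, the contraction supplied by Lemma~\ref{lem:contra} with a step size independent of $\gamma$, and the base case $\gamma=0$ solved directly. Your write-up is in fact more explicit than the paper's about the base case and the Lipschitz estimate $\|\cJ(\Phi_1)-\cJ(\Phi_2)\|_{\mathbb{I}_1}\le K\|\Phi_1-\Phi_2\|_{\mathcal{S}_1}$, but the underlying argument is the same.
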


\begin{proof}
The proof follows from Picard's contraction theorem by using Lemma~\ref{lem:contra}. 
First, when $\gamma=0$, for any $\xi$ satisfying Assumption~\ref{ass:dynamic-1} and \ref{ass:dynamic-4}, $\mathcal{E}(0,\xi,\cI)$ admits a unique solution for ${\mathcal I}\in {\mathbb I}_1$. Next, we proceed to the induction step. 
Suppose that for some $\gamma\in[0,1]$, for any $\xi$ satisfying Assumption~\ref{ass:dynamic-1} and \ref{ass:dynamic-4} and any ${\mathcal I}\in {\mathbb I}_1$, FBSDE $\mathcal{E}(\gamma,\xi,\cI)$ admits a unique solution. For $\eta >0$, we define a mapping $\Psi$ from ${\mathcal S_1}$ into itself as follows: Given $\Phi \in {\mathcal S_1}$, we denote by $\Theta'$ the extended solution
	of the FBSDE ${\mathcal E}(\gamma,\xi,{\mathcal I}^{\prime})$ with 
	\begin{equation*}
		\begin{split}
			&\cI'^{,b,u}_{t} = \eta b^u(t,\theta^u_{t}) + \cI^{b,u}_{t},
			\\
			&\cI'^{,\sigma,u}_{t} = \eta \sigma^u(t,\theta^u_{t}) + \cI^{\sigma,u}_{t},
			\\
			&\cI'^{,f,u}_{t} = \eta \partial_{x} H^u(t,\Phi^u_{t}) + \eta \int_I \tilde{G}(v,u)\t \EE \bigl[ \partial_{\mu} H^v(t,\tilde{ \Phi}^v_{t})(X^u_{t}) \bigr]dv + \cI^{f,u}_{t},
			\\
			&\cI'^{,g,u}_{T} = \eta \partial_{x} g^u(X^u_{T},\cG^u_T) + \eta \int_I \tilde{G}(v,u)\t \EE \bigl[ 
			\partial_{\mu} g^v(\t X^v_{T},\cG^v_{T})(X^u_{T}) \bigr]dv
			+ \cI^{g,u}_{T}. 
		\end{split}
	\end{equation*}
	It follows that $\cI'$ is in $\mathbb{I}_1$, thus the extended solution is uniquely defined and by assumption it belongs to ${\mathcal S_1}$. Observe that a process $\Phi \in {\mathcal S_1}$ is a fixed point of $\Psi$ if and only if $\Phi$ is an extended solution of ${\mathcal E}(\gamma + \eta,\xi,{\mathcal I})$. Then by using Lemma~\ref{lem:contra} and choosing $\eta$ small enough, we obtain that for any $\xi$ satisfying Assumption~\ref{ass:dynamic-1} and \ref{ass:dynamic-4} and any ${\mathcal I}\in {\mathbb I}_1$, FBSDE $\mathcal{E}(\gamma+\eta,\xi,\cI)$ admits a unique solution. Finally we conclude by induction.
\end{proof}
\begin{assumptionp}

\begin{enumerate}[label=\normalfont{\textbf{(\ref*{ass:d}\arabic{*}).}}, ref=\normalfont{\textbf{(\ref*
{ass:d}\arabic{*})}}]
\setcounter{enumi}{\value{counterExtraAssumptions}}
    \item\label{ass:d--4} For any
	$t \in [0,T]$, 
	$x,x' \in \RR^d$, $\alpha,\alpha' \in \RR^k$, 
	$\mu,\mu' \in {\mathcal P}_{2}(\RR^d)$,
	$\RR^d$-valued random variables $X$ and $X'$, and $u \in I$, assume
	\begin{equation*}
		\begin{split}
			&\EE \bigl[ \vert \partial_{\mu} f^u(t,x',\mu',\alpha')(X') - \partial_{\mu} f^u(t,x,\mu,\alpha)(X) \vert^2 \bigr]\\
			&\quad\quad \leq c^u \bigl( \vert (x',\alpha') - (x,\alpha) \vert^2 +
			\EE \bigl[ \vert X'- X \vert^2\bigr] +\cW_2(\mu',\mu)\bigr),
			\\
			&\EE \bigl[ \vert \partial_{\mu} g^u(x',\mu')(X') - \partial_{\mu} g^u(x,\mu)(X) \vert^2 \bigr] 
			\leq c^u \bigl( \vert x'-x \vert^2 + \EE \bigl[ \vert X'- X \vert^2 \bigr]+\cW_2(\mu',\mu) \bigr). 
		\end{split}
	\end{equation*}  \setcounter{counterExtraAssumptions}{\value{enumi}}
\end{enumerate}
\end{assumptionp}
Note that Assumption~\ref{ass:d--4} is more restrictive than Assumption~\ref{ass:d--2} because it removes the constraint that $X,X'$ should satisfy $\cL(X)=\mu, \cL(X')=\mu'$.

\begin{theorem}
	\label{th:exo2}
	Under Assumptions~\ref{ass:d--1}, \ref{ass:d--3} and \ref{ass:d--4}, \eqref{eq:fbo} is uniquely solvable in $\mathcal{S}_2$. 
\end{theorem}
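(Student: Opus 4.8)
The plan is to reproduce, in the stronger supremum-over-labels norm, the continuation-method argument that proved Theorem~\ref{th:exo}. Since $\| \Phi \|_{\mathcal{S}_1} \leq \| \Phi \|_{\mathcal{S}_2}$ on the label space $I=[0,1]$, we have $\mathcal{S}_2 \subset \mathcal{S}_1$, so uniqueness in $\mathcal{S}_2$ is already inherited from Theorem~\ref{th:exo}; the genuine content is to produce the stability estimate and to run the continuation entirely within $\mathcal{S}_2$. Accordingly, I would first establish the $\mathcal{S}_2$-analogue of Lemma~\ref{lem:contra}: under Assumptions~\ref{ass:d--1}, \ref{ass:d--3} and~\ref{ass:d--4}, there is a constant $C$, independent of $\gamma\in[0,1]$, so that the extended solutions $\Phi,\Phi'$ of $\mathcal{E}(\gamma,\xi,\mathcal{I})$ and $\mathcal{E}(\gamma,\xi',\mathcal{I}')$ with inputs in $\mathbb{I}_2$ satisfy
\begin{equation*}
    \| \Phi - \Phi' \|_{\mathcal{S}_2} \leq C \Big( \sup_{u\in I} \big({\mathbb E}\vert \xi^u - \xi'^{,u} \vert^2\big)^{1/2} + \| {\mathcal I} - {\mathcal I}' \|_{\mathbb I_2} \Big).
\end{equation*}

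To derive this I would repeat, label by label, the duality computation behind Lemma~\ref{lem:contra}: apply It\^o's formula to $\Delta X^u_t \cdot \Delta Y^u_t$ (with $\Delta X = X-X'$, etc.), take expectations, and combine the convexity of $g^u$ and the strong convexity of $f^u$ from Assumption~\ref{ass:d--3} with the Lipschitz bounds on the derivatives. The decisive difference from the $\mathcal{S}_1$-proof lies in the graphon cross-term $\int_I\tilde G(v,u)\t\EE[\partial_\mu H^v(t,\t\Phi^v_t)(X^u_t)]dv$. In the integrated norm one uses Fubini together with the representative variable $\Theta^u=X^{\vartheta^u}$ (whose law is $\cG^u$) to rewrite this term as in~\eqref{eq:24:1:2}, which \emph{restores} the self-consistency $\cL(\t\Theta^u_t)=\cG^u_t$ between the measure argument and the evaluation point, so that Assumption~\ref{ass:d--2} applies. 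In the sup-norm there is no label-integration and hence no such rearrangement: one must estimate $\partial_\mu H^v(t,\cdot)(X^u_t)$ pointwise in $u$, where the evaluation point $X^u_t$ has law $\mu^u_t$ and \emph{not} the measure argument $\cG^v_t$. This is exactly why the weaker~\ref{ass:d--2}, which presupposes $\cL(X)=\mu$, is unavailable and the stronger Assumption~\ref{ass:d--4} is required. The extra $\cW_2$-term it supplies is then controlled using the normalization~\eqref{eq:kappa}, which gives $\int_I\tilde G(v,w)\,dw=1$, so that by the mixture structure of $\cG^v_t=\int_I\tilde G(v,w)\cL(X^w_t)\,dw$ and convexity of the squared Wasserstein distance,
\begin{equation*}
    \cW_2^2\big(\cG^v_t,\cG'^{,v}_t\big) \leq \int_I \tilde G(v,w)\,\cW_2^2\big(\cL(X^w_t),\cL(X'^{,w}_t)\big)\,dw \leq \sup_{w\in I} \EE\big[\vert X^w_t - X'^{,w}_t\vert^2\big].
\end{equation*}
Taking the supremum over $u$ and absorbing this bound closes the estimate in $\mathcal{S}_2$.

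With this estimate available, the continuation argument transfers verbatim from the proof of Theorem~\ref{th:exo}. For $\gamma=0$ the system $\mathcal{E}(0,\xi,\mathcal{I})$ decouples into a forward SDE driven by $\mathcal{I}^{b,u},\mathcal{I}^{\sigma,u}$ and a backward BSDE with driver $\mathcal{I}^{f,u}$ and terminal condition $\mathcal{I}^{g,u}$; standard Lipschitz theory yields a unique solution that is bounded uniformly in $u$, hence in $\mathcal{S}_2$, for every $\mathcal{I}\in\mathbb{I}_2$. For the induction step I would define the map $\Psi$ on $\mathcal{S}_2$ exactly as in Theorem~\ref{th:exo}, first verifying that the perturbed inputs $\mathcal{I}'^{,b,u},\dots,\mathcal{I}'^{,g,u}$ remain in $\mathbb{I}_2$ (this uses the \emph{uniform} boundedness and linear-growth bounds of Assumption~\ref{ass:d--1} together with the $\mathcal{S}_2$-bound on $\Phi$, rather than the integrated bounds sufficient for $\mathbb{I}_1$), and then invoking the $\mathcal{S}_2$-estimate above to make $\Psi$ a contraction for $\eta$ small enough, with $\eta$ independent of $\gamma$. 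Iterating from $\gamma=0$ to $\gamma=1$ yields unique solvability of $\mathcal{E}(1,\xi,0)$, namely~\eqref{eq:fbo}, in $\mathcal{S}_2$.

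The main obstacle is precisely the $\mathcal{S}_2$-stability estimate for the graphon cross-term, where the loss of the label-integration removes the representative-variable rearrangement that made Assumption~\ref{ass:d--2} usable in the $\mathcal{S}_1$ setting. The resolution rests on two facts working together: the normalization $\int_I\tilde G(v,w)\,dw=1$, which keeps $\cW_2(\cG^v_t,\cG'^{,v}_t)$ below $\sup_{w}\cW_2(\cL(X^w_t),\cL(X'^{,w}_t))$; and Assumption~\ref{ass:d--4}, which drops the constraint $\cL(X)=\mu$ and supplies exactly the $\cW_2(\mu',\mu)$-term needed to accommodate the mismatched evaluation point in the convexity estimate. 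A secondary point requiring care is that each input-perturbation step of the Picard map stays within $\mathbb{I}_2$ and not merely $\mathbb{I}_1$, which is where the uniform-in-label hypotheses of Assumption~\ref{ass:d--1} are used.
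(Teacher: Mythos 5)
Your proposal is correct and follows essentially the same route as the paper: establish the $\mathcal{S}_2$-analogue of Lemma~\ref{lem:contra} by redoing the duality/convexity estimate label by label, invoking Assumption~\ref{ass:d--4} precisely because the loss of label-integration removes the representative-variable rearrangement that made~\ref{ass:d--2} usable, and then rerun the continuation argument with $\mathcal{S}_1,\mathbb{I}_1$ replaced by $\mathcal{S}_2,\mathbb{I}_2$. Your added observations (uniqueness inherited from $\mathcal{S}_2\subset\mathcal{S}_1$ since \ref{ass:d--4} implies \ref{ass:d--2}, and the mixture bound $\cW_2^2(\cG^v_t,\cG'^{,v}_t)\le\sup_{w\in I}\EE[\vert X^w_t-X'^{,w}_t\vert^2]$, which is the second assertion of Lemma~\ref{lem:cg}) are consistent with, and slightly more explicit than, the paper's sketch.
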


\begin{proof}
	We prove a similar contraction property as in Lemma~\ref{lem:contra} but for the space $\mathcal{S}_2$. We will omit the steps that are similar and only highlight the differences. First notice that since Assumption~\ref{ass:d--4} is in force, we do not need to integrate over labels anymore. Taking the backward part for example, we obtain directly that
	for each $u\in I$,
	\begin{equation*}
		\begin{split}
			&\EE \biggl[ \sup_{0 \leq t \leq T} \vert Y^u_{t} - Y'^{,u}_{t} \vert^2 + \int_{0}^T \vert Z^u_{t} - Z'^{,u}_{t} \vert^2 dt \biggr]
			\\
			&\hspace{15pt} \leq C^u \gamma  \biggl[\sup_{u\in I}
			\EE\Big[\sup_{0 \leq t \leq T} \vert X^u_{t} - X'^{,u}_{t} \vert^2\Big]
			+  \EE\bigl[\int_{0}^T   \vert \alpha^u_{t} - \alpha'^{,u}_{t} \vert^2  dt \bigr]\biggr]+ 
			C^u \| {\mathcal I} - {\mathcal I}' \|_{\mathbb I}^2.
		\end{split}
	\end{equation*}
	Hence we have
	\begin{equation*}
		\begin{split}
			&\sup_{u\in I}\EE \biggl[ \sup_{0 \leq t \leq T} \vert Y^u_{t} - Y'^{,u}_{t} \vert^2 + \int_{0}^T \vert Z^u_{t} - Z'^{,u}_{t} \vert^2 dt \biggr]
			\\
			&\hspace{15pt} \leq C \gamma  \biggl[\sup_{u\in I}
			\EE\Big[\sup_{0 \leq t \leq T} \vert X^u_{t} - X'^{,u}_{t} \vert^2\Big]
			+  \EE\bigl[\int_{0}^T   \vert \alpha^u_{t} - \alpha'^{,u}_{t} \vert^2  dt \bigr]\biggr]+ 
			C \| {\mathcal I} - {\mathcal I}' \|_{\mathbb I_2}^2.
		\end{split}
	\end{equation*}
	Following the proof of Lemma~\ref{lem:contra}, replacing the integral $\int_I\cdot du$ by $\sup_{u\in I}$, we get
	\begin{equation*}
		\| \Phi - \Phi' \|_{\mathcal{S}_2} \leq C \bigl( \bigl[ {\mathbb E}\vert \xi^u - \xi'^{,u} \vert^2 \bigr]^{1/2}
		+ \| {\mathcal I} - {\mathcal I}' \|_{\mathbb I_2} \bigr).  
	\end{equation*}
	Finally, following similar arguments as in the proof of Theorem~\ref{th:exo} but replacing the spaces $\mathcal{S}_1$ and $\mathbb{I}_1$ by $\mathcal{S}_2$ and $\mathbb{I}_2$ respectively, we can conclude. 
\end{proof}

\subsection{Continuity and stability}\label{sec:sc}
\medskip

In this section, we prove the continuity and the stability of the FBSDE system under optimal control \eqref{eq:fbo}. The continuity result consists in comparing the differences of the solutions associated to different labels within a same system. We will use the following assumption to simplify the presentation.

\begin{assumptionp}

\begin{enumerate}[label=
\normalfont{\textbf{(\ref*{ass:d}\arabic{*}).}}, ref=\normalfont{\textbf{(\ref*
{ass:d}\arabic{*})}}]
\setcounter{enumi}{\value{counterExtraAssumptions}}
\item \label{ass:d--5}Suppose $(b^u,\sigma^u,f^u,g^u)=(b,\sigma,f,g)$ for all $u\in I$.
\end{enumerate}
\end{assumptionp}

We call a graphon $G$ \emph{piecewise continuous} if there exists a partition of $I$ into $k$ intervals $\{I_i, i=1,\ldots ,k\}$, for some $k\in \mathbb{N}$, such that $G(u,v)$ is piecewise continuous with respect to $u$ and $v$ in all intervals $I_i, i=1,\ldots,k$. Furthermore, we call it \emph{piecewise Lipschitz continuous} if for all $u_1,u_2\in I_i$, $v_1,v_2 \in I_j$, and $i,j\in\{1,\ldots,k\}$, there exists a constant $C$ such that
$|G(u_1,v_1)-G(u_2,v_2)|\leq C(|u_1-u_2|+|v_1-v_2|).$

We now state the continuity result. 

\begin{theorem}[Continuity]\label{thm:conti}
Recall that in the canonical coupled system \eqref{eq:fbww} (with initial condition coupled on $\o\cF_0$ and all label driven by the canonical Brownian motion), $(\o X,\o Y, \o Z)$ denote the solution of the canonical coupled system and $\o \xi$ denote the coupled initial condition. Under Assumptions~\ref{ass:d--1}--\ref{ass:d--5}, for any two different labels $u_1$ and $u_2$, we have
	\begin{align}
			& \EE \Big[ \sup_{0 \leq t \leq T} \vert \o X^{u_1}_{t} - \o X^{u_2}_{t} \vert^2 + \sup_{0 \leq t \leq T} \vert \o Y^{u_1}_{t} - \o Y_{t}^{u_2} \vert^2 + \int_{0}^T \vert \o Z^{u_1}_{t} - \o Z_{t}^{u_2} \vert^2 dt\Big]
			\\
			&\hspace{15pt} \leq  C\bigl( \EE \bigl[ \vert \o\xi^{u_1} - \o\xi^{u_2} \vert^2 \bigr] +\|G(u_1,\cdot)-G(u_2,\cdot)\|_1 \bigr). 
		\label{eq:graphonconti}
	\end{align}
In addition, if the graphon $G$ and $u\mapsto\mu^u_0\in\cP_2(\RR^d)$ are (piecewise) continuous, then the law $\cL(X^u,Y^u,Z^u)$ of the solution $(X^u,Y^u,Z^u)$ of FBSDE \eqref{eq:fbo} is (piecewise) continuous in label $u$. Moreover, if $G$ and $u\mapsto\mu^u_0\in \cP_2(\RR^d)$ are (piecewise) Lipschitz continuous, $u\mapsto\cL(X^u_t,Y^u_t,Z^u_t)$ is also (piecewise) Lipschitz continuous for every $t \in [0,T]$.
\end{theorem}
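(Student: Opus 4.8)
The plan is to work entirely on the canonical coupled probability space of \eqref{eq:fbww}, where both labels are driven by the same Brownian motion $\o W$ and share the $\o\cF_0$-coupled initial data; this is what makes the pathwise $L^2$ comparison in \eqref{eq:graphonconti} meaningful. Thanks to the homogeneity imposed in Assumption~\ref{ass:d--5}, the coefficients $(b,\sigma,f,g)$ and hence the minimizer map $\hat\alpha$ are label-independent, so the only $u$-dependence of the solution of \eqref{eq:fbo} enters through (i) the initial condition $\o\xi^u$, (ii) the graphon mean field $\cG^u_t=\int_I\tilde{G}(u,v)\mu^v_t\,dv$ appearing as a measure argument in $b,\sigma,\partial_xH,\partial_xg$, and (iii) the graphon weights $\tilde{G}(v,u)$ in the adjoint integral and terminal terms. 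The first reduction is two elementary graphon estimates, both consequences of Assumption~\ref{ass:graphon} and the symmetry of $G$: using $\tilde{G}(u,v)=\|G(u,\cdot)\|_1^{-1}G(u,v)$ together with the reverse triangle inequality one gets $\|\tilde{G}(u_1,\cdot)-\tilde{G}(u_2,\cdot)\|_1\le 2G^{-1}_\infty\|G(u_1,\cdot)-G(u_2,\cdot)\|_1$, and (since $\tilde{G}$ is normalized in its first slot while $G$ is symmetric) $\int_I|\tilde{G}(v,u_1)-\tilde{G}(v,u_2)|\,dv\le G^{-1}_\infty\|G(u_1,\cdot)-G(u_2,\cdot)\|_1$.

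Next I would bound the discrepancy of the graphon mean fields by $\cW_2(\cG^{u_1}_t,\cG^{u_2}_t)^2\le C\,\|G(u_1,\cdot)-G(u_2,\cdot)\|_1$, uniformly in $t$. The clean way is via the representative variables $\Theta^u_t=\o X^{\vartheta^u}_t$ introduced before \eqref{eq:fb}: I would take a \emph{maximal} coupling of the two mixing indices $\vartheta^{u_1},\vartheta^{u_2}$ (with densities $\tilde{G}(u_1,\cdot),\tilde{G}(u_2,\cdot)$), independent of the $\o X$ family, so that $\PP(\vartheta^{u_1}\ne\vartheta^{u_2})=\tfrac12\|\tilde{G}(u_1,\cdot)-\tilde{G}(u_2,\cdot)\|_1$. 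On the agreement event the two representatives coincide, while on its complement $|\Theta^{u_1}_t-\Theta^{u_2}_t|^2$ is controlled by $\sup_v\EE|\o X^v_t|^2$, which is finite by \eqref{eq:fbregu}. Conditioning on the indices and invoking the first graphon estimate yields the claimed $\cW_2$ bound, which in particular also dominates the difference of first moments $\int x\,\cG^{u_i}_t(dx)$ entering the affine drift and volatility under Assumption~\ref{asm:Afull-linear}.

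With these inputs, the core is a stability estimate for the difference $(\Delta X,\Delta Y,\Delta Z):=(\o X^{u_1}-\o X^{u_2},\o Y^{u_1}-\o Y^{u_2},\o Z^{u_1}-\o Z^{u_2})$, obtained by rerunning the computation of Lemma~\ref{lem:contra} (or its $\mathcal S_2$ version in the proof of Theorem~\ref{th:exo2}) label by label. Writing the FBSDE for $(\Delta X,\Delta Y,\Delta Z)$ driven by the common $\o W$, each coefficient difference splits into a part that is Lipschitz in $(\Delta X,\Delta Y,\Delta Z)$ (and in the difference of optimal feedbacks, using that $\hat\alpha$ is Lipschitz in $(x,\mu,y,z)$) and a \emph{graphon source term}: the measure-argument differences produce factors $\cW_2(\cG^{u_1}_t,\cG^{u_2}_t)$ bounded in the previous step, while the adjoint integral and terminal terms produce $\int_I(\tilde{G}(v,u_1)-\tilde{G}(v,u_2))\,\t\EE[\partial_\mu H^v(\cdots)(\cdot)]\,dv$ and the analogous $g$-term, whose sizes are controlled by $\int_I|\tilde{G}(v,u_1)-\tilde{G}(v,u_2)|\,dv$ times the uniformly bounded $L^2$-norms of the linear-growth derivatives $\partial_\mu H^v,\partial_\mu g^v$ evaluated along the solution. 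Applying It\^o's formula to $\langle\Delta X_t,\Delta Y_t\rangle$ and exploiting the $\lambda$-strong convexity of $H^u$ in $\alpha$ together with the convexity of $g^u$ (Assumption~\ref{ass:d--3}) absorbs the Lipschitz cross-terms exactly as in Lemma~\ref{lem:contra}, while all graphon source terms contribute their squared $L^2$-masses, each of which is $\le C\|G(u_1,\cdot)-G(u_2,\cdot)\|_1$ (using $\|G(u_1,\cdot)-G(u_2,\cdot)\|_1\le 2\|G\|_\infty$ to reduce any square to a single power). A Gr\"onwall step then delivers \eqref{eq:graphonconti}. The hard part is precisely this step: unlike a short-time Lipschitz argument, closing the estimate for arbitrary $T$ forces us through the convexity/monotonicity bookkeeping of the fully coupled FBSDE while simultaneously tracking the two distinct appearances of the graphon (as a measure argument and as integration weights), and the backward source term is where the $\cW_2$-coupling bound of the previous step is indispensable.

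Finally, I would deduce the regularity of the laws. Since pathwise uniqueness gives $\cL(\o X^u,\o Y^u,\o Z^u)=\cL(X^u,Y^u,Z^u)$ (the identification established before Lemma~\ref{lem:barx}), and since $\cW_2\big(\cL(\o X^{u_1}_t,\o Y^{u_1}_t,\o Z^{u_1}_t),\cL(\o X^{u_2}_t,\o Y^{u_2}_t,\o Z^{u_2}_t)\big)^2$ is dominated by the left-hand side of \eqref{eq:graphonconti}, continuity of the law in $u$ follows once the right-hand side tends to $0$. When $G$ and $u\mapsto\mu^u_0$ are (piecewise) continuous, $u'\mapsto\|G(u,\cdot)-G(u',\cdot)\|_1$ is (piecewise) continuous by dominated convergence and boundedness, and the coupled initials can be chosen so that $u\mapsto\o\xi^u$ is (piecewise) $L^2$-continuous; when they are (piecewise) Lipschitz, the right-hand side of \eqref{eq:graphonconti} is bounded within a piece by $C|u_1-u_2|$, giving the (piecewise) Lipschitz continuity of $u\mapsto\cL(X^u_t,Y^u_t,Z^u_t)$ for each $t$ (for the $Z$-component, for a.e. $t$, since the estimate only controls $\int_0^T|\Delta Z_t|^2\,dt$).
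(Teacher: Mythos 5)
Your proposal is correct in outline and reaches the stated estimate, but it takes a genuinely different route from the paper for the core step. For the preliminary bound $\cW_2^2(\cG^{u_1}_t,\cG^{u_2}_t)\le C\|G(u_1,\cdot)-G(u_2,\cdot)\|_1$ the paper argues directly from the definition of $\cW_2$ and the triangle inequality, bounding it by $\int_I|\tilde G(u_1,v)-\tilde G(u_2,v)|\,\EE[|X^v_t|^2]\,dv$; your maximal coupling of the mixing indices $\vartheta^{u_1},\vartheta^{u_2}$ gives the same bound and is a clean alternative. For the FBSDE difference estimate, the paper exploits Assumption~\ref{ass:d--5} more aggressively than you do: since $\hat\alpha^u=\hat\alpha$ is a single Lipschitz feedback common to all labels, it substitutes $\hat\alpha$ into \eqref{eq:fbo}, treats the result as a coupled FBSDE with Lipschitz coefficients and graphon source terms, and closes the loop with a forward estimate, a backward estimate of the form $\epsilon\,\EE[\sup_t|\Delta X_t|^2]+C_2(\epsilon)\|G(u_1,\cdot)-G(u_2,\cdot)\|_1$, and an absorption argument with $C_1\epsilon<1$; it does not invoke convexity here at all. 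You instead rerun the It\^o-duality/strong-convexity bookkeeping of Lemma~\ref{lem:contra}. That route can be made to work, but note one point you gloss over: in Lemma~\ref{lem:contra} (and in Theorem~\ref{thm:sta}) the $\partial_\mu$ kernel terms are symmetrized by integrating over $u\in I$ and applying Fubini, which is unavailable for a comparison of two fixed labels inside one system; you must instead treat the adjoint source $\int_I\tilde G(v,u_i)\t\EE[\partial_\mu H^v(\cdots)(X^{u_i}_t)]\,dv$ as an exogenous term (the global mean fields being already solved and frozen), splitting its difference into a kernel-weight part controlled by $\|G(u_1,\cdot)-G(u_2,\cdot)\|_1$ and an evaluation-point part that is Lipschitz in $\Delta X$ by Assumption~\ref{ass:d--2}. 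With that said explicitly, your argument closes; it is arguably more robust for arbitrary $T$ than the paper's $\epsilon$-bootstrap, at the cost of redoing the convexity computation. Your final step deducing (piecewise) continuity and Lipschitz continuity of the laws from \eqref{eq:graphonconti} and the choice of a (piecewise) continuous coupled initial condition matches the paper.
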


\begin{proof}
	We first bound $\cW_2(\cG^{u_1}_t,\cG^{u_2}_t)$. 
	By the definition of Wasserstein $L^2$ distance and triangle inequality, we have for some constant $C$, 
	$$\cW_2^2(\cG^{u_1}_t,\cG^{u_2}_t)\leq \int_I|\tilde{G}(u_1,v)-\tilde{G}(u_2,v)|\EE[|X^v_t|^2]dv\leq C\int_I|G(u_1,v)-G(u_2,v)| dv,$$
	where the last inequality comes from the $L^2$ boundedness of $X^v_t$ over all $v\in I$.
	
    We now prove the estimate on the FBSDE solution.
	Since now under Assumption~\ref{ass:d--5}, the optimal control $\h\alpha^u$ is identical for all $u\in I$, which is Lipschitz continuous with respect to all parameters except $t$, we can plug it into the FBSDE and get rid of the control $\alpha$, obtaining new coefficients which are also Lipschitz. 
	For forward part, by classical estimate method, we have for some constant $C_1$ 
	\begin{align}
		\begin{aligned}
			\label{eq:Fes1}
			\EE \bigl[ \sup_{0 \leq t \leq T} \vert \o X^{u_1}_{t} - \o X^{u_2}_{t} \vert^2  \bigr]
			& \leq C_1\EE \bigl[ \vert \o\xi^{u_1} - \o\xi^{u_2} \vert^2 \bigr] + C_1 
			\|G(u_1,\cdot)-G(u_2,\cdot)\|_1 \\
			& \quad \quad + C_1 \EE\bigl[\int_{0}^T \vert \o Y^{u_1}_{t} - \o Y_{t}^{u_2} \vert^2+\vert \o Z^{u_1}_{t} -\o Z_{t}^{u_2} \vert^2 dt\bigr].
		\end{aligned} 
	\end{align}
	
	Next we prove the estimate of the backward part. For any $\epsilon<1$, for some $C_2(\epsilon)$, we have 
	\begin{equation}
		\label{eq:Bes1}
		\begin{split}
			\EE \biggl[ \sup_{0 \leq t \leq T} &\vert \o Y^{u_1}_{t} - \o Y^{u_2}_{t} \vert^2 + \int_{0}^T \vert \o Z^{u_1}_{t} - \o Z^{u_2}_{t} \vert^2 dt \biggr]
			\\
			&\hspace{25pt} \leq \epsilon \EE \bigl[
			\sup_{0 \leq t \leq T} \vert \o X^{u_1}_{t} - \o X^{u_2}_{t} \vert^2
			\bigr]+ 
			C_2(\epsilon) \|G(u_1,\cdot)-G(u_2,\cdot)\|_1.
		\end{split}
	\end{equation}
	
	Then by taking $C_1\epsilon<1$, we have for some constant $C$,
	\begin{align}
		\begin{aligned}
			\label{eq:Fes3}
			\EE \bigl[ \sup_{0 \leq t \leq T} \vert \o X^{u_1}_{t} - \o X^{u_2}_{t} \vert^2  \bigr]
			& \leq C\EE \bigl[ \vert \o\xi^{u_1} - \o\xi^{u_2} \vert^2 \bigr] + C
			\|G(u_1,\cdot)-G(u_2,\cdot)\|_1. 
		\end{aligned} 
	\end{align}

	Plugging \eqref{eq:Fes3} into \eqref{eq:Bes1} and adding them together, we finally get 
\begin{equation*}
	\begin{split}
		& \EE \bigl[ \sup_{0 \leq t \leq T} \vert \o X^{u_1}_{t} - \o X^{u_2}_{t} \vert^2 + \sup_{0 \leq t \leq T} \vert \o Y^{u_1}_{t} - \o Y_{t}^{u_2} \vert^2 + \int_{0}^T \vert \o Z^{u_1}_{t} - \o Z_{t}^{u_2} \vert^2 dt\bigr]
		\\
		&\hspace{15pt} \leq  C\bigl( \EE \bigl[ \vert \o\xi^{u_1} - \o\xi^{u_2} \vert^2 \bigr] +\|G(u_1,\cdot)-G(u_2,\cdot)\|_1 \bigr). 
	\end{split}
\end{equation*}
	
	Note that the continuity (resp. Lipschitz continuity) of the initial condition $u\mapsto \mu^u_0\in\cP_2(\RR^d)$ implies that there always exists $(\o\xi^u)_{u\in I}$ such that $u\mapsto \o\xi^u\in L^2(\bar{\Omega};\RR^d)$ is continuous (resp. Lipschitz continuous). Hence the rest of the assertions are direct consequences of~\eqref{eq:graphonconti}.
\end{proof}

The stability result lies in comparing the global solutions over all label associated to two systems induced by two different graphons.  

\begin{theorem}[Stability]\label{thm:sta}
	Let $G$ and $G'$ be two graphons  satisfying Assumptions~\ref{ass:graphon}. Let $X,Y,Z$ and $X',Y',Z'$ be the solutions of \eqref{eq:fbo} associated to graphons $G$ and $G'$, and initial conditions $\xi$ and $\xi'$ respectively. Under Assumptions~\ref{ass:d--1}--\ref{ass:d--5}, we have for some constant $C$:
	\begin{equation}\label{eq:graphonsta}
		\begin{split}
			&\int_I \EE \Big[ \sup_{0 \leq t \leq T} \vert X^u_{t} - X_{t}'^{,u} \vert^2 + \sup_{0 \leq t \leq T} \vert Y^u_{t} - Y_{t}'^{,u} \vert^2 + \int_{0}^T \vert Z^u_{t} - Z_{t}'^{,u} \vert^2 dt\Big] du
			\\
			&\hspace{15pt} \leq  C\Big(\int_I\EE \bigl[ \vert \xi^u - \xi'^{,u} \vert^2 \bigr]du +\|G-G'\|_1 \Big). 
		\end{split}
	\end{equation}
\end{theorem}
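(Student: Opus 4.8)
The plan is to follow the structure of the continuity result (Theorem~\ref{thm:conti}), adapting it so as to compare the solutions of the two FBSDE systems associated with $G$ and $G'$ at the \emph{same} label, and then to integrate the resulting per-label estimate over $u\in I$. The first and genuinely new step is to control the distance between the two neighborhood mean fields $\cG^u_t=[\cG\cL(X_t)]^u$ and $\cG'^{,u}_t=[\cG'\cL(X'_t)]^u$, the latter using the normalization $\tilde G'$ of $G'$. Writing $\mu^v_t=\cL(X^v_t)$ and $\mu'^{,v}_t=\cL(X'^{,v}_t)$, I would insert the intermediate measure $\hat\cG^u_t:=\int_I\tilde G(u,v)\mu'^{,v}_t\,dv$ and apply the triangle inequality for $\cW_2$. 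The piece $\cW_2(\cG^u_t,\hat\cG^u_t)$ has identical mixing weights, so by convexity of $\cW_2^2$ it is bounded by $\int_I\tilde G(u,v)\EE[|X^v_t-X'^{,v}_t|^2]\,dv$. The piece $\cW_2(\hat\cG^u_t,\cG'^{,u}_t)$ mixes the same laws with the \emph{different} weights $\tilde G(u,\cdot)$ and $\tilde G'(u,\cdot)$; decomposing $\tilde G(u,\cdot)-\tilde G'(u,\cdot)$ into positive and negative parts, coupling the common mass to itself and transporting only the mismatched mass $\tfrac12\|\tilde G(u,\cdot)-\tilde G'(u,\cdot)\|_1$, its cost is controlled using the uniform bound $\sup_v\EE[|X'^{,v}_t|^2]<\infty$ coming from Theorem~\ref{th:exo}. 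Finally, since $\tilde G(u,v)=G(u,v)/\|G(u,\cdot)\|_1$, a direct computation together with Assumption~\ref{ass:graphon} (which keeps the normalizing denominators bounded away from $0$) gives $\|\tilde G(u,\cdot)-\tilde G'(u,\cdot)\|_1\le 2G^{-1}_\infty\|G(u,\cdot)-G'(u,\cdot)\|_1$. Altogether, for some constant $C$,
\begin{equation*}
	\cW_2^2(\cG^u_t,\cG'^{,u}_t)\le C\int_I\tilde G(u,v)\EE\bigl[|X^v_t-X'^{,v}_t|^2\bigr]\,dv+C\|G(u,\cdot)-G'(u,\cdot)\|_1.
\end{equation*}

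Next I would obtain, for each fixed label $u$, forward and backward estimates exactly as in the proof of Theorem~\ref{thm:conti}. Under Assumption~\ref{ass:d--5} the minimizer $\hat\alpha^u=\hat\alpha$ is the same Lipschitz function for every label, so it can be plugged into \eqref{eq:fbo} to produce an FBSDE with Lipschitz coefficients. Standard SDE estimates yield a forward bound analogous to \eqref{eq:Fes1}, controlling $\EE[\sup_t|X^u_t-X'^{,u}_t|^2]$ by $\EE[|\xi^u-\xi'^{,u}|^2]$, the time-integral of $\cW_2^2(\cG^u_t,\cG'^{,u}_t)$, and the differences of $(Y,Z)$. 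For the backward part I would mirror \eqref{eq:Bes1}, invoking the strong convexity of $f$ (constant $\lambda$) and the convexity of $g$ to absorb the coupling and produce an estimate of the form $\epsilon\,\EE[\sup_t|X^u_t-X'^{,u}_t|^2]$ plus a graphon term, where $\epsilon$ can be chosen arbitrarily small; combining the two as in Theorem~\ref{thm:conti} (choosing $\epsilon$ with $C_1\epsilon<1$) decouples the forward and backward estimates.

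The final step is the integration over labels. Integrating the combined per-label estimate in $u\in I$ and applying Fubini to the mean-field coupling term, I would use that, since $G$ is bounded and $\|G(u,\cdot)\|_1^{-1}\le G^{-1}_\infty$ by Assumption~\ref{ass:graphon}, the kernel $\tilde G$ is uniformly bounded and hence $\sup_v\int_I\tilde G(u,v)\,du<\infty$ (the analogous backward kernel integral $\int_I\tilde G(v,u)\,du=1$), so that
\begin{equation*}
	\int_I\int_I\tilde G(u,v)\EE\bigl[|X^v_t-X'^{,v}_t|^2\bigr]\,dv\,du\le C\int_I\EE\bigl[|X^v_t-X'^{,v}_t|^2\bigr]\,dv,
\end{equation*}
which reproduces the integrated left-hand side of \eqref{eq:graphonsta}; likewise $\int_I\|G(u,\cdot)-G'(u,\cdot)\|_1\,du=\|G-G'\|_1$. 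A Gronwall argument in the integrated $\mathcal{S}_1$-norm then closes the estimate and yields \eqref{eq:graphonsta}.

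The main obstacle is the control of $\cW_2(\cG^u_t,\cG'^{,u}_t)$ in the first step: unlike in the continuity theorem, the two mean fields are built from \emph{different} graphons, so one must simultaneously handle the change of state laws and the change of (normalized) mixing kernels, and the map $G\mapsto\tilde G$ is nonlinear. This is precisely where Assumption~\ref{ass:graphon} and the uniform $L^2$-bounds on the states from Theorem~\ref{th:exo} are essential. A secondary technical point is ensuring that integrating over $u$ preserves the contraction structure, which again relies on the boundedness of $\int_I\tilde G(u,v)\,du$.
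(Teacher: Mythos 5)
Your treatment of the mean-field term is fine and matches the paper: the bound $\cW_2^2(\cG^u_t,\cG'^{,u}_t)\le C\int_I\tilde G(u,v)\EE[|X^v_t-X'^{,v}_t|^2]\,dv + C\|G(u,\cdot)-G'(u,\cdot)\|_1$ (via an intermediate measure and a mass-transport argument for the mismatched normalized weights) is essentially the paper's combination of Lemma~\ref{lem:cg} with the estimate \eqref{eq:11}, and the integration over labels using $\tilde G\le \|G\|_\infty G^{-1}_\infty$ is also what the paper does. The gap is in how you close the forward--backward loop. The system \eqref{eq:fbo} is \emph{fully coupled}: the forward coefficients depend on $(Y^u_t,Z^u_t)$ through $\hat\alpha^u_t=\hat\alpha(t,X^u_t,\cG^u_t,Y^u_t,Z^u_t)$, and in the stability setting (unlike the continuity one, where the two labels live in the \emph{same} system and the mean field is an unperturbed exogenous input) the mean fields $\cG^u$ and $\cG'^{,u}$ are themselves perturbed, so the integrated forward and backward estimates feed into each other with Lipschitz constants that are not small. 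Your claim that the backward estimate can be produced with an arbitrarily small coefficient $\epsilon$ in front of $\EE[\sup_t|X^u_t-X'^{,u}_t|^2]$ by ``invoking the strong convexity of $f$ and the convexity of $g$'' is not justified: the terminal condition $\partial_xg(X_T,\cG_T)+\dots$ and the driver are Lipschitz in $X$ with fixed constants (Assumption~\ref{ass:d--2}), so a standard BSDE estimate returns a coefficient of order $c^2e^{CT}$, not an arbitrarily small $\epsilon$; convexity of the \emph{costs} does not act on the BSDE estimate directly. Over an arbitrary horizon $T$ the resulting Gronwall system does not contract.

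What the paper actually does (Step 1 of its proof) is a duality/optimality argument on the cost functional, which your proposal omits entirely. Since $\alpha$ and $\alpha'$ are the optimizers for their respective problems, comparing $J(\alpha')-J(\alpha)$ and $J(\alpha)-J(\alpha')$ using the strong convexity of $f$ in $\alpha$ (constant $\lambda$ from Assumption~\ref{ass:d--3}), the convexity of $g$, and the integration-by-parts identity for $\int_I\EE[(X'^{,u}_T-X^u_T)\cdot Y^u_T]\,du$ yields
$$\int_I\EE\int_0^T|\alpha^u_t-\alpha'^{,u}_t|^2\,dt\,du\le\varepsilon\int_I\EE|Y^u_0-Y'^{,u}_0|^2\,du+\frac{C}{\varepsilon}\Big(\int_I\EE|\xi^u-\xi'^{,u}|^2\,du+\|G-G'\|_1\Big),$$
the small parameter $\varepsilon$ coming from Young's inequality applied to the initial-condition duality term $\int_I\EE[(\xi'^{,u}-\xi^u)\cdot(Y^u_0-Y'^{,u}_0)]\,du$; this step also requires controlling the discrepancy between the representative variables built from the two different graphons ($\Theta'^{,u,G}$ versus $\Theta'^{,u}$), which costs another $\|G-G'\|_1$. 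With this bound on the control difference in hand, the forward estimate is expressed in terms of $\int_I\EE\int_0^T|\alpha-\alpha'|^2$ rather than the full $(Y,Z)$ difference, the backward estimate is expressed in terms of the $X$ and $\alpha$ differences, and choosing $\varepsilon$ small closes the system. You need to add this step; without it the argument does not terminate.
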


\begin{proof}
Let $\alpha$ and $\alpha'$ be the optimal controls respectively associated to $G$ and $G'$.

	\noindent{\bf Step 1:} We estimate the difference between $\alpha$ and $\alpha'$. 
	Recall the definition of $\Theta$. Further, we define the notation $\Theta'^{,u,G}:=(X')^{\vartheta^u}$ and $\Theta'^{,u}:=(X')^{(\vartheta')^u}$, where $\vartheta^u$ and $(\vartheta')^u$ are the random variables defined with graphons $G$ and $G'$ respectively (in Section~\ref{sec:3.1}). Then notice that by Assumptions~\ref{ass:d--1}--\ref{ass:d--2}, we have
	\begin{align}
		&\int_I\EE \biggl[ \biggl( \int_I\tilde{G}(v,u) \t\EE[\partial_\mu g(\t X^v_T, \cG^v_T)(X^u_{T})]dv\biggr)  \cdot (X_T'^{,u}- X^u_T) \biggr]du
		\\
		&\leq \int_I \EE\bigl[\t\EE[|\partial_\mu g(X^u_T, \cG^u_T)(\t\Theta^u_{T})-\partial_\mu g(0, \delta_0)(\t 0)|^2]^{\frac{1}{2}}\t\EE[|\t\Theta_T'^{,u,G}-\t\Theta_T'^{,u}|^2]^{\frac{1}{2}} \bigr]du\\
		& \quad + \int_I\EE   \t\EE\bigl[\partial_\mu g(X^u_T, \cG^u_T)(\t\Theta^u_{T}) \cdot (\t\Theta_T'^{,u}- \t\Theta^u_T) \bigr]du
		 \\
		 &\quad +\int_I\EE   \t\EE\bigl[\partial_\mu g(0, \delta_0)( 0) \cdot (\t\Theta_T'^{,u,G}- \t\Theta'^{,u}_T) \bigr]du\\
		&\leq C\int_I \EE[|X^u_T|^2+|\Theta^u_T|^2]^{\frac{1}{2}}\cdot\t\EE[|\t\Theta_T'^{,u,G}-\t\Theta_T'^{,u}|^2]^{\frac{1}{2}}du\\
		&\quad\quad+C \int_I \t\EE\bigl[|\t\Theta_T'^{,u,G}-\t\Theta_T'^{,u}|\bigr]du+\int_I\EE   \t\EE\bigl[\partial_\mu g(X^u_T, \cG^u_T)(\t\Theta^u_{T}) \cdot (\t\Theta_T'^{,u}- \t\Theta^u_T) \bigr]du.
		\label{eq:123}
	\end{align}
	By adding and subtracting terms, we have
	\begin{align}
		\t\EE|\t\Theta_T'^{,u,G}-\t\Theta_T'^{,u}|&\leq\int_I\int_{\RR^d}\bigg|\frac{G(u,v)}{\|G(u,\cdot)\|_1}-\frac{G'(u,v)}{\|G'(u,\cdot)\|_1}\bigg||x|\mu'^{,v}_T(dx) dv
		\\
		&\quad +\bigl|\|G'(u,\cdot)\|_1-\|G(u,\cdot)\|_1\bigr|\|G(u,\cdot)\|_1^{-1}\int_I \frac{G'(u,v)}{\|G'(u,\cdot)\|_1}\int_{\RR^d}|x|\mu'^{,v}_T(dx)dv\\
		&\leq C \int_I\bigl|G(u,v)-G'(u,v)\bigr|dv,
		\label{eq:11}
	\end{align}
	where the last inequality comes from the fact that $\sup_{u\in I}\|X'^{,u}\|^2_{\mathbf{S}^2}<\infty$. 
	Similarly we have
	\begin{align*}
		\t\EE|\t\Theta_T'^{,u,G}-\t\Theta_T'^{,u}|^2
		&\leq C\int_I\bigl|G(u,v)-G'(u,v)\bigr|dv.	
	\end{align*}
	Then using the above two results in \eqref{eq:123}, we obtain 
	\begin{align*}
		&\int_I\EE \Big[ \Big( \int_I\tilde{G}(v,u) \t\EE[\partial_\mu g(\t X^v_T, \cG^v_T)(X^u_{T})]dv\Big)  \cdot (X_T'^{,u}- X^u_T) \Big]du\\
		&\leq C \|G-G'\|_1+\int_I\EE   \t\EE\bigl[\partial_\mu g(X^u_T, \cG^u_T)(\t\Theta^u_{T}) \cdot (\t\Theta_T'^{,u}- \t\Theta^u_T) \bigr]du.
	\end{align*}
	Then it follows from the convexity assumption (Assumption~\ref{ass:d--3}) that	
	\begin{align*}
			&\int_I\EE \bigl[(X_T'^{,u}-X^u_T)\cdot Y^u_T \bigr]du
			\\
			&= \int_I\EE \bigl[ \partial_xg(X^u_T,\cG^u_t)\cdot(X'^{,u}_T-X^u_T)\bigr]du+\int_I \EE\t\EE[\partial_\mu g^u( X^u_T, \cG^u_T)(\t\Theta^u_{T})\cdot (\t\Theta_T'^{,u,G}- \t\Theta^u_T) \bigr]du
			\\
			&\leq \int_I \EE\bigl[g(X_T'^{,u},\cG'^{,u}_t)-g(X^u_T,\cG^u_T)\bigr]du  + C\|G-G'\|_1.
	\end{align*}
	We proceed similarly for $f$. As in the proof of Lemma~\ref{lem:contra}, we obtain %
	\begin{equation*}
		\begin{split}
			J(\alpha') - J(\alpha) \geq &
			\lambda\int_I \EE \Big[\int_{0}^T \vert \alpha^u_{t} - \alpha_{t}'^{,u} \vert^2 dt\Big]du + 
			\int_I \EE[(\xi'^{,u}-\xi^u)\cdot Y^u_0]du 
			+C\|G-G'\|_1. 
		\end{split}
	\end{equation*}
	Hence, following similar arguments and by Young's inequality, we have that for a constant $C$ and for any $\varepsilon >0$,
	\begin{equation}
		\label{eq:2:2:3}
		\int_I\EE \int_{0}^T \vert \alpha_{t} - \alpha_{t}' \vert^2 dtdu
		\leq 
		\varepsilon\int_I 
		\EE| Y^u_0 - Y'^{,u}_0 |^2du 
		+ \frac{C}{\varepsilon} \biggl(\int_I \EE \bigl[ \vert \xi^u - \xi'^{,u} \vert^2 \bigr] du
		+\|G-G'\|_1\biggr).  
	\end{equation}
	
	\noindent{\bf Step 2:} We estimate the difference between FBSDE systems. 
	First let us estimate the value of $\cW_2(\cG^u,\cG'^{,u})$. To clarify the dependency of the population distribution $\mu$ in the definition of $\cG$, we write it as a parameter explicitly as in~\eqref{Lab}. 
	By the triangle inequality, \cite[Lemma 3.1]{phamnonlinear} (see Lemma~\ref{lem:cg} in appendix) and \eqref{eq:11}, we have that
	\begin{align*}
		\cW^2_2([\cG \mu_t]^u,[\cG'\mu'_t]^u)&\leq \cW^2_2([\cG \mu_t]^u,[\cG' \mu_t]^u)+\cW^2_2([\cG' \mu_t]^u,[\cG' \mu_t']^u)\\
		& \leq C\|G(u,\cdot)-G'(u,\cdot)\|_1+C\int_I\cW^2_2(\mu^u_t,\mu'^{,u}_t) du.
	\end{align*}
	Now using standard estimate methods (see e.g. \cite{phamnonlinear}) for the forward part and the above inequality, we obtain that for some constant $C_1$,
	\begin{align}
			&\int_I\EE \bigl[ \sup_{0 \leq t \leq T} \vert X^u_{t} - X_{t}'^{,u} \vert^2  \bigr]du
			\\
			& \leq \int_I\EE \bigl[ \vert \xi^u - \xi'^{,u} \vert^2 \bigr]du + C_1 
			\|G-G'\|_1 + C_1 \int_I \EE \bigl[\int_{0}^T \vert \alpha^u_{t} - \alpha_{t}'^{,u} \vert^2 dt\bigr]du.
			\label{eq:Fes2}
	\end{align}
	
	Next we estimate the backward part. Using similar estimate methods for graphon BSDEs as in \cite{amicaosu2022graphonbsde}, we have that
	\begin{align}
			&\int_I\EE \biggl[ \sup_{0 \leq t \leq T} \vert Y^u_{t} - Y'^{,u}_{t} \vert^2 + \int_{0}^T \vert Z^u_{t} - Z'^{,u}_{t} \vert^2 dt \biggr]du
			\\
			&\leq C_2 \int_I \EE \biggl[
			\sup_{0 \leq t \leq T} \vert X^u_{t} - X'^{,u}_{t} \vert^2
			+  \int_{0}^T   \vert \alpha^u_{t} - \alpha'^{,u}_{t} \vert^2  dt \biggr]du+ 
			C_2 \|G-G'\|_1.
		\label{eq:Bes2}
	\end{align}
	
	Finally combining \eqref{eq:2:2:3}, \eqref{eq:Fes2} and \eqref{eq:Bes2} leads to 
	\begin{equation}
		\begin{split}
			&\int_I \EE \bigl[ \sup_{0 \leq t \leq T} \vert X^u_{t} - X_{t}'^{,u} \vert^2 + \sup_{0 \leq t \leq T} \vert Y^u_{t} - Y_{t}'^{,u} \vert^2 + \int_{0}^T \vert Z^u_{t} - Z_{t}'^{,u} \vert^2 dt\bigr] du
			\\
			&\hspace{15pt} \leq  C_3 \varepsilon\int_I 
			\EE| Y^u_0 - Y'^{,u}_0 |^2du
			+ C_3\bigl( \int_I\EE \bigl[ \vert \xi^u - \xi'^{,u} \vert^2 \bigr]du +\|G-G'\|_1 \bigr) \\
			&\hspace{15pt} \leq  C_3\bigl(\int_I\EE \bigl[ \vert \xi^u - \xi'^{,u} \vert^2 \bigr]du +\|G-G'\|_1 \bigr), 
		\end{split}
	\end{equation}
	for some constant $C_3$ where the last inequality follows from choosing $\varepsilon$ small enough.
\end{proof}

\section{Propagation of Chaos and Approximate Optimality}
\label{sec:chaos}

In this section, we establish a connection between $N$-agent systems and the corresponding limit graphon mean field systems. We use the solution of the optimal control of the graphon mean field dynamics to obtain an approximately optimal control for the problem with $N$ agents when $N$ tends to $+\infty$. Throughout this section, we suppose Assumptions~\ref{asm:Afull-linear} and~\ref{ass:d--1}--\ref{ass:d--5} hold.

\subsection{N-agent system}

Let $\xi^{N,i}\in L^2(\cF_0)$ for all $i\in [N]$ and assume they are independent. For convenience, we fix an infinite sequence $((W_{t}^i)_{0 \leq t \leq T})_{i\ge 1}$ of independent $m$-dimensional Brownian motions. Let $\FF^{N}$ be the filtration generated by  $(W^1,\dots,W^N)$ and augmented by the set $\mathcal{N}_{\PP}$ of $\PP$-null events and $\cF_0$. Let $\mathbb{H}^2_{N}(A)$ denote the set of $A$-valued $\mathbb{F}^{N}$ progressively measurable processes $\phi$ satisfying 
$$
    \|\phi\|_{\mathbb{H}}:=\Bigg(\EE\bigg[\int_0^T |\phi_t|^2dt\bigg]\Bigg)^{1/2}<\infty.
$$
 A strategy profile $\underline{\beta}=(\beta^1,\ldots,\beta^N)$ is called admissible if for all $i\in [N]$, $\beta^i\in\bH^2_{N}(A)$. Let $\cM\bH^2_N$ denote the set of admissible control profiles of $N$-agent system.

We consider the interaction matrix $\zeta^N:=(\zeta^N_{ij})_{i,j}\in \RR^{N\times N}$ with all entries non-negative, describing interaction strength between all pairs $(i,j)$. We let
\begin{equation}
    \label{eq:def-kappaN}
    \kappa^{N,ij}:=\frac{\zeta^N_{ij}}{\sum_{j=1}^N \zeta^N_{ij}}.
\end{equation}
The dynamics of the $N$-agent system is given by the following $N$ coupled SDEs:
\begin{equation}
	\label{fo:stateN}
	dX_{t}^i = b \bigl(t,X_{t}^i,\bar{\nu}_t^{N,i},\beta_t^i \bigr) dt + \sigma(t,X_{t}^i,\bar{\nu}_{t}^{N,i},\beta_{t}^i) dW_{t}^i, \quad X^i_0=\xi^{N,i}, \quad1 \leq i \leq N,
\end{equation}
where
$
	\bar{\nu}^{N,i}_{t} = \sum_{j=1}^N \kappa^{N,ij} \delta_{X_{t}^j}
$
is called the rescaled neighborhood empirical measure. 
We construct a step graphon $G_N$ induced by $\zeta^N$, defined as
$$
    G_N(u,v)=\zeta^N_{ij}, \quad\quad (u,v)\in \cI^N_i\times \cI^N_j. 
$$
where $\cI^N_1 \coloneqq [0, \tfrac{1}{N}]$ and $\cI^N_i \coloneqq (\tfrac{i-1}{N},\tfrac{i}{N}]$ for $i>1$. 
The value we take for $G_N(0,0)$ does not matter for the results presented in the sequel but for the sake of definiteness, we let it be $\zeta^N_{11}$. 

For each $1 \leq i \leq N$, we denote the cost of the $i$-th agent by
\begin{equation}
	\label{fo:costN}
	J^{N,i}(\beta^1,\dots,\beta^N) =  {\mathbb E}
	\biggl[ \int_{0}^T f(t,X_{t}^i,\bar{\nu}_{t}^N,\beta_{t}^i) dt + g\bigl(X_{T}^i,\bar{\nu}_{T}^N
	\bigr) \biggr]
\end{equation}
Notice that the roles of different $i$ are not exchangeable, and thus $J^{N,i}$ is different for different $i$. We aim to minimize the total cost over all admissible strategy profiles $\underline{\beta}=(\beta^1,\ldots,\beta^N) \in \cM\bH^2_N$: 
$$
    J^{N}(\underline{\beta}) 
    = \frac{1}{N}\sum_{i=1}^N J^{N,i}(\beta^1,\dots,\beta^N)
    = \frac{1}{N}\sum_{i=1}^N {\mathbb E}
\biggl[ \int_{0}^T f(t,X_{t}^i,\bar{\nu}_{t}^{N,i},\beta_{t}^i) dt + g\bigl(X_{T}^i,\bar{\nu}_{T}^{N,i}
\bigr) \biggr].
$$

\subsection{Limit theory and non-Markovian approximate optima}
We denote by $J^*$ the optimal GMFC cost:
\begin{equation}
	\label{eq:optimalcost}
	J^* = \int_I\EE \biggl[ \int_{0}^T f\bigl(t,X^u_{t},\cG^u_{t},\hat{\alpha}(t,X^u_{t},\cG^u_{t},Y^u_{t},Z^u_{t})\bigr) dt + g(X^u_{T},\cG^u_{T}) \biggr]du,
\end{equation} 
where $(X^u_{t},Y^u_{t},Z^u_{t})_{0 \leq t \leq T}$ is the solution to the coupled graphon mean field FBSDE \eqref{eq:fbo} with $\h\alpha$ is the minimizer of the Hamiltonian. Let $(\mu^u_{t})_{0 \leq t \leq T}$ denote the flow of probability measures $\mu^u_t=\PP_{X^u_{t}}$, for $u\in I, 0 \leq t \leq T$. 

In addition, we will sometimes make use of some of the following assumptions.

\begin{assumption}[Interaction regularity]\label{ass:e}
The sequence $(G_N)_N$ and the graphon $G$ satisfy:
\begin{enumerate}[label=\normalfont{\textbf{(\ref*{ass:e}\arabic{*})}}, ref=\normalfont{\textbf{(\ref*
{ass:e}\arabic{*})}}, topsep=2pt,itemsep=0pt,partopsep=0pt, parsep=0pt]

		\item \label{ass:e--1} For any $N\geq 1$, the step graphon $G_N$ induced by $\zeta^N$ satisfies Assumptions~\ref{ass:graphon}.
		\item \label{ass:e--2} $\|G-G_N\|_1\to 0$ as $N\to\infty$.
		\item \label{ass:e--3} There exist some constant $K$ and a partition of $I$ into $K$ intervals $I_k,k \in [K]$, such that the initial distribution of states is continuous on each interval of the partition, i.e. for every $k\in[K]$, $I_k\ni u\mapsto\mu^u_0\in \cP_2(\RR^d) $ is continuous w.r.t. $\cW_2$ metric, and the graphon $G$ is continuous on each block $I_i\times I_j$, for $i,j\in [K]$.
		\item \label{ass:e--4} The following conditions hold:
    \begin{enumerate}[topsep=2pt,itemsep=0pt,partopsep=0pt, parsep=0pt]
            \item The sequence $(G_N)_{N\geq 1}$ satisfies $\|G-G_N\|_1 \leq C/N$, for some constant $C$.
            \item There exists some constants $L_1$ and $L_2$ such that:
            \begin{enumerate}[topsep=2pt,itemsep=0pt,partopsep=0pt, parsep=0pt]
		      \item For each $i\in [K]$ and any $u,v\in I_i$, $\cW_2(\mu^u_0,\mu^v_0)\leq L_1|u-v|.$
		      \item  For each pair $(i,j)\in [K]^2$ and any $(u_1,v_1),(u_2,v_2)\in I_i\times I_j$,
		$|G(u_1,v_1)-G(u_2,v_2)|\leq L_2(|u_1-u_2|+|v_1-v_2|).$
        \end{enumerate}
        \end{enumerate}
	\end{enumerate}
\end{assumption}

Note that in most of the existing literature on graphon interacting systems without controls (see e.g. \cite{erhangmfs,wu2022,amicaosu2022graphonbsde}), convergence results are obtained under a weaker convergence condition, in the sense of cut norm. But in graphon systems \emph{with controls}, convergence results are usually studied under stronger convergence conditions than that in the cut norm sense. For instance in~\cite{mlstatic} the convergence condition for graphon is in $L^2$ norm. In this paper, we study the limit theory under the assumption that the step graphon sequence $(G_N)_{N\geq 1}$ converges in $L^1$ norm. We leave for future work the study of the limit theory under weaker assumptions (e.g., in cut norm).
Assumption~\ref{ass:e--3}-\ref{ass:e--4} are classical conditions in graphon mean field framework to study related limit theory, see e.g. \cite{erhangmfs,phamnonlinear}: \ref{ass:e--3} is used for the propagation of chaos, and~\ref{ass:e--4} is used for the convergence rate.

For each $i\in [N]$, let $\vartheta_i$ be  a random variable taking values in $\{1,\dots,N\}$, independent of all other random variables and processes, with distribution  $\PP(\vartheta_i=j)=\kappa^{N,ij}$, where we recall that $\kappa^{N,ij}$ is defined in~\eqref{eq:def-kappaN}.

We introduce the following step graphon system, which is induced by the step graphon $G_N$ generated by $\zeta^N$, 
\begin{equation}
	\label{eq:fbn}
	\left\{
	\begin{aligned}
		&d X^{N,u}_t= b(t,X^{N,u}_t,\cG^{N,u}_t,\alpha^{N,u}_t)dt 
		+\sigma(t,X^{N,u}_t,\cG^{N,u}_t,\alpha^{N,u}_t)dW^u_t,\\
		&\textstyle dY^{N,u}_t=-\partial_xH(t,X^{N,u}_t,\cG^{N,u}_t,Y^{N,u}_t,Z^{N,u}_t,\alpha^{N,u}_t)dt + Z^{N,u}_t dW^u_t\\
		&\textstyle\qquad\qquad-\int_I\tilde{G}_N^{v,u}\t\EE[\partial_\mu  H(t,\t X^{N,v}_t,\cG^{N,v}_t,\t Y^{N,v}_t,\t Z^{N,v}_t,\t\alpha^{N,v}_t)(X^{N,u}_t)]dvdt,\\
		&\textstyle Y^{N,u}_T= \partial_xg(X^{N,u}_T,\cG^{N,u}_T)+\int_I\tilde{G}_N^{,v,u}\t\EE[\partial_\mu g(\t X^{N,v}_T,\cG^{N,v}_T)(X^{N,u}_T)]dv, 
		\\
		&X^{N,u}_0= \xi^{N,u},\; u\in I ,
	\end{aligned}
	\right.
\end{equation}
where the control is $\alpha_{t}^{N,u} = \hat{\alpha}(t,\bar{X}_{t}^{N,u},\cG^{N,u}_t,\bar{Y}_{t}^{N,u},\bar{Z}_{t}^{N,u})$, $\tilde{G}_N$ is the normalized version of graphon $G_N$ (see~\eqref{eq:kappa}), $\cG^N_t$ is defined analogously to~\eqref{Lab} by: for any $(u,t)\in I\times [0,T]$, we denote $\cG^{N,u}_t(dx)=\int_I\tilde{G}_N^{,u,v}\cL(X^{N,v}_t)(dx)dv$. We denote by $(\bar X^i,\bar{Y}^i,\bar{Z}^i)$ the solution associated to label $i/N$ of the forward-backward equation in~\eqref{eq:fbn} when driven by the Brownian motion $W^i$ with the initial condition being $\xi^{N,i}$. Indeed, we discretize the continuum system \eqref{eq:fbn} into $N$ pieces when the associated graphon is a step graphon. In each interval $\cI^N_i$, all labels have the same behavior and hence we use $(\o X^i,\o Y^i,\o Z^i)$ to denote the representative particle on the $i$-th interval. $(\bar X^1,\cdots,\bar X^N)$ is also the solution to the system 
\eqref{fo:stateN} when the rescaled neighborhood empirical measure $\bar\nu^{N,i}_t$ is replaced by the neighborhood mean field $\cG^{N,i}_t(dx):=\sum_{j=1}^N \kappa^{N,ij}\bar{\mu}^j_t(dx)$ with $\bar{\mu}^j_t=\cL(\bar{X}^j_t)$, and $\beta^i_{t}$ is given by
$\beta_{t}^i = \bar{\alpha}_{t}^i = \hat{\alpha}(t,\bar{X}_{t}^i,\cG^{N,i}_t,\bar{Y}_{t}^i,\bar{Z}_{t}^i).$ 
Namely,
\begin{equation}
	\label{fo:stateN-barXi}
	d\bar{X}_{t}^i = b \bigl(t,\bar{X}_{t}^i, \cG^{N,i}_t,\bar{\alpha}_t^i \bigr) dt + \sigma(t,\bar{X}_{t}^i,\cG^{N,i}_t,\bar{\alpha}_{t}^i) dW_{t}^i, \quad \bar{X}^i_0=\xi^{N,i}, \quad1  \leq i \leq N,
\end{equation}
We denote by $\bar{J}^{N,i}$ the cost produced by $\bar{X}^i$, i.e.
$$
    \bar{J}^{N,i} = {\mathbb E}\biggl[ \int_{0}^T f(t,\bX_{t}^i,\cG_{t}^{N,i},\o \alpha_{t}^i) dt + g\bigl(\bX_{T}^i,\cG_{T}^{N,i}\bigr) \biggr],
$$
and denote by $\bar{J}^N$ the average of $\bar{J}^{N,i}, i\in[N]$.
The processes $(\bar{X}^i,\bar{ Y}^i, \bar{Z}^i, \bar{\alpha}^i)_{1 \leq i \leq N}$ are independent since they interact only through their laws. Let us also define the weighted empirical measure of the system of particles $\bar{X}^j, j=1,\dots,N$ at time $t$,
$$\bar{\cG}^{N,i}_{t} = \sum_{j=1}^N \kappa^{N,ij} \delta_{\bar{X}_{t}^j}, \quad t \in [0,T].$$

We first give a result measuring the difference between the neighborhood mean field $\cG^{N,i}$ and the neighborhood empirical measure $\bar{\cG}^{N,i}$ when the number of agents $N$ is large enough, which extends the classical result in \cite{fournier2015rate} on the
rate of convergence in Wasserstein distance of the empirical measure of i.i.d. random variables to the graphon framework. Let us denote
\begin{equation}
	\label{eq:def_q}
		q_{N,d,\varkappa}\coloneqq \begin{cases}
			N^{-1/2} + N^{-\varkappa/(2+\varkappa)},\; &\text{\rm if } d<4,\;  \text{\rm and } 2+\varkappa\neq 4,\\[0.3em]
			N^{-1/2}\log(1+N) + N^{-\varkappa/(2+\varkappa)},\; &\text{\rm if } d = 4,\; \text{\rm and } 2+\varkappa \neq 4,\\[0.3em]
			N^{-2/d} + N^{-\varkappa/(2+\varkappa)},\; &\text{\rm if } d>4,\; \text{\rm and } 2+\varkappa\neq d/(d-2).
		\end{cases}
	\end{equation}

To study quantitative results of propagation of chaos, we will sometimes use the following assumption.  
\begin{assumption}\label{ass:iniepsilon}
	There exists $\varkappa>0$ such that for any $t\in[0,T]$,
    $
        \sup_{u\in I}\EE|X^u_t|^{2+\varkappa}<\infty.
    $ 
\end{assumption}

\begin{remark}\label{rq:Lk}
Note that Assumption~\ref{ass:iniepsilon} is for every $t$ and not just time $0$. It holds for instance when the volatility is uncontrolled, see e.g. \cite[Lemma 4.3]{lauriere2022convergence}. 
\end{remark}

Lemma~\ref{lem:esempi} in appendix shows that $\EE[\cW^2_2(\o\cG^{N,i},\cG^{N,i})]\leq q_{N,d,\varkappa}$ under Assumption~\ref{ass:iniepsilon}, where $\bar{\cG}^{N,i}$ is the weighted empirical measure of the system of particles $\bar{X}^j, j=1,\dots,N$ defined above. 
We omit the proof of this lemma here for the sake of brevity. Now, we are ready to give the result of propagation of chaos. 

\begin{theorem}[Propagation of Chaos]\label{thm:poc}
	Suppose Assumptions~\ref{ass:e--1}--\ref{ass:e--3} hold. Suppose that for every $u_i\in \cI^N_i, i\in[N]$,  $W^{u_i}=W^i$. Let $\hat{\alpha}^{u_i}$ and $X^{u_i}$ be respectively  the optimal control and corresponding controlled state process associated to label $u_i$ in \eqref{eq:fbo}. Let $X^i$ be the state process of the $i$-th particle where the whole system \eqref{fo:stateN} is controlled by $(\h\alpha^{u_1}, \ldots,\h\alpha^{u_N})$. Assume that as $N\to \infty$, $\frac{1}{N}\sum_{i=1}^N\EE[|\xi^{u_i}-\xi^{N,i}|^2]\to 0$. Then we have: 
	$
	   \frac{1}{N}\sum_{i=1}^N\EE \Big[\sup_{0\leq t\leq T }|X^i_t-X^{u_i}_t|^2\Big]\to 0.
	$
	If, in addition, Assumptions~\ref{ass:iniepsilon} and \ref{ass:e--4} hold and $\frac{1}{N}\sum_{i=1}^N\EE[|\xi^{u_i}-\xi^{N,i}|^2]\leq \frac{C}{N}$ for some constant $C$, we have
	\begin{equation}\label{eq:poc-rate}
		\frac{1}{N}\sum_{i=1}^N\EE \Big[\sup_{0\leq t\leq T }|X^i_t-X^{u_i}_t|^2\Big]\leq Cq_{N,d,\varkappa}.
	\end{equation}
    Furthermore the above also holds if \eqref{fo:stateN} is controlled by $(\o\alpha^1,\ldots,\o\alpha^N)$. 
\end{theorem}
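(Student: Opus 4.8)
The plan is to prove the estimate by a triangle-inequality decomposition that inserts the step-graphon limit system $\bar X^i$ (solving \eqref{fo:stateN-barXi}) between the finite-particle state $X^i$ and the continuum graphon state $X^{u_i}$:
$$\frac1N\sum_{i=1}^N\EE\Big[\sup_{0\le t\le T}|X^i_t-X^{u_i}_t|^2\Big]\le \frac2N\sum_{i=1}^N\EE\Big[\sup_t|X^i_t-\bar X^i_t|^2\Big]+\frac2N\sum_{i=1}^N\EE\Big[\sup_t|\bar X^i_t-X^{u_i}_t|^2\Big].$$
The second (limit-to-limit) sum compares two graphon FBSDE systems with the same coefficients but graphons $G_N$ and $G$ and initial data $(\xi^{N,i})_i$ and $(\xi^{u_i})_i$; since $G_N$ is a step graphon, its FBSDE solution at every label of $\cI^N_i$ coincides with $\bar X^i$, so after combining the stability estimate of Theorem~\ref{thm:sta} with the within-block continuity of $u\mapsto\cL(X^u,Y^u,Z^u)$ from Theorem~\ref{thm:conti} (used to pass from block averages to the representative labels $u_i$), I would bound this term by $C\big(\frac1N\sum_i\EE|\xi^{u_i}-\xi^{N,i}|^2+\|G-G_N\|_1\big)$, which vanishes by hypothesis and Assumption~\ref{ass:e--2}, and is $O(1/N)$ under Assumption~\ref{ass:e--4}.

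For the first (empirical-to-limit) sum I would subtract the two dynamics driven by the common $W^i$ and use the uniform Lipschitz property of $b,\sigma$ (Assumption~\ref{ass:d--1}), so that after Burkholder--Davis--Gundy and Gronwall on $[0,t]$,
$$\EE\Big[\sup_{s\le t}|X^i_s-\bar X^i_s|^2\Big]\le C\,\EE\int_0^t\Big(|X^i_s-\bar X^i_s|^2+\cW_2^2(\bar\nu^{N,i}_s,\cG^{N,i}_s)+|\hat\alpha^{u_i}_s-\bar\alpha^i_s|^2\Big)ds,$$
where the last term is present only in the case where $X^i$ is driven by the open-loop control $\hat\alpha^{u_i}$ and is absent when the system is controlled by $\bar\alpha^i$ (this is precisely what makes the ``furthermore'' case follow by the same argument). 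The control-mismatch term is handled by the Lipschitz continuity of the minimizer $\hat\alpha$ in $(x,\mu,y,z)$ established in Section~\ref{sec:5.1}: one has $|\hat\alpha^{u_i}_s-\bar\alpha^i_s|\le C(|X^{u_i}_s-\bar X^i_s|+\cW_2(\cG^{u_i}_s,\cG^{N,i}_s)+|Y^{u_i}_s-\bar Y^i_s|+|Z^{u_i}_s-\bar Z^i_s|)$, so its average over $i$ is again controlled by the stability bound of Theorem~\ref{thm:sta} and vanishes at the same rate as the limit-to-limit term.

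It then remains to treat $\frac1N\sum_i\EE\int_0^T\cW_2^2(\bar\nu^{N,i}_s,\cG^{N,i}_s)\,ds$. Splitting $\cW_2(\bar\nu^{N,i}_s,\cG^{N,i}_s)\le\cW_2(\bar\nu^{N,i}_s,\bar\cG^{N,i}_s)+\cW_2(\bar\cG^{N,i}_s,\cG^{N,i}_s)$, the first piece is bounded by the coupling $\sum_j\kappa^{N,ij}|X^j_s-\bar X^j_s|^2$. The crucial structural fact I would exploit is that Assumption~\ref{ass:e--1} (i.e. Assumption~\ref{ass:graphon} for $G_N$) forces $\sup_i\|G_N(u_i,\cdot)\|_1^{-1}<\infty$ with $G_N$ bounded, hence $\kappa^{N,ij}\le C/N$ and therefore $\sup_j\sum_{i=1}^N\kappa^{N,ij}\le C$ uniformly in $N$; exchanging the sums then gives $\frac1N\sum_i\sum_j\kappa^{N,ij}\EE|X^j_s-\bar X^j_s|^2\le \frac{C}{N}\sum_j\EE|X^j_s-\bar X^j_s|^2$, which is absorbed into the left-hand side by Gronwall. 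The second piece, the distance between the weighted empirical measure of the independent particles $\bar X^j$ and its deterministic limit $\cG^{N,i}$, is exactly what Lemma~\ref{lem:esempi} controls, giving $\EE[\cW_2^2(\bar\cG^{N,i}_s,\cG^{N,i}_s)]\le q_{N,d,\varkappa}$ under Assumption~\ref{ass:iniepsilon}; for the purely qualitative statement I would instead invoke the classical convergence of weighted empirical measures of independent, uniformly square-integrable samples under the continuity Assumption~\ref{ass:e--3}, which yields convergence to $0$ without a rate. Collecting the bounds and applying Gronwall to $\frac1N\sum_i\EE[\sup_{s\le t}|X^i_s-\bar X^i_s|^2]$ then yields both the qualitative convergence and, under Assumptions~\ref{ass:iniepsilon} and~\ref{ass:e--4} with $\frac1N\sum_i\EE|\xi^{u_i}-\xi^{N,i}|^2\le C/N$, the rate $Cq_{N,d,\varkappa}$ (since $1/N$ and $\|G-G_N\|_1\le C/N$ are dominated by $q_{N,d,\varkappa}$). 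I expect the main obstacle to be this last step: correctly handling the non-exchangeable weighted empirical measure so that the self-interaction term closes under Gronwall after averaging over labels, which hinges on the uniform bound $\sup_j\sum_i\kappa^{N,ij}\le C$ and on transferring Lemma~\ref{lem:esempi} to the label-heterogeneous setting.
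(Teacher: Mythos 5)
Your proposal is correct and uses the same three ingredients as the paper (the step-graphon intermediate system, Lemma~\ref{lem:esempi} for the weighted empirical measure, and the continuity/stability Theorems~\ref{thm:conti} and~\ref{thm:sta}), but it organizes them differently. The paper pivots through $\tilde X^i$, the $N$-particle system run under the step-graphon optimal control $\bar{\hat\alpha}^{u_i}$: it first invokes the propagation-of-chaos argument of the \emph{uncontrolled} graphon literature to compare $\tilde X^i$ with $\bar X^{u_i}$, then isolates the pure control mismatch $\EE[\sup_t|\tilde X^i_t-X^i_t|^2]\le C\int_0^T\EE|\hat\alpha^{u_i}_t-\bar{\hat\alpha}^{u_i}_t|^2dt$ (bounded via Theorems~\ref{thm:conti} and~\ref{thm:sta}), and finally compares $\bar X^u$ with $X^u$ by stability. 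You instead pivot through $\bar X^i$ of \eqref{fo:stateN-barXi} and run a single Gronwall argument that absorbs the empirical-measure discrepancy and the control mismatch simultaneously, which amounts to re-proving the chaos step inline rather than citing it; this is a legitimate and arguably more self-contained route, and your observation that $\sup_j\sum_i\kappa^{N,ij}\le C$ (via $\kappa^{N,ij}\le C/N$ from Assumption~\ref{ass:e--1}, which implicitly requires the $G_N$ to be uniformly bounded, as the paper also tacitly assumes) is exactly what is needed to close the Gronwall loop for the label-averaged quantity. Two points deserve care but are consistent with the paper's own level of detail: the comparison between labels $u\in\cI^N_i$ and the representative $u_i$ must be carried out at the level of laws (Wasserstein distances), since the processes are driven by different Brownian motions, before Theorem~\ref{thm:conti} can be used to pass from the block integral of Theorem~\ref{thm:sta} to the discrete average; and the control-mismatch bound requires $(X^{u_i},Y^{u_i},Z^{u_i})$ and $(\bar X^i,\bar Y^i,\bar Z^i)$ to be coupled through the common Brownian motion $W^{u_i}=W^i$, which your setup ensures.
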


\begin{proof}
	We prove the second part of the statement, \eqref{eq:poc-rate}. The first part of the statement (i.e., the limit) can be proved in a similar way.  Let $\o X^u$ be the forward part of solution associated to label $u$ of \eqref{eq:fbn} with initial condition being $\o \xi^u=\xi^{u_i}$ for $u\in \cI^N_i,i\in [N]$, and $W^{u_i}=W^i$. Let $\bar{\hat{\alpha}}^u$ denote the optimal control for label $u$ in the new system. Notice that in this new system,  $\bar{\hat{\alpha}}^u$ has the same law for all $u\in\cI^N_i,i\in N$. Let $\tilde{X}^i$ be the $i$-th particle in system \eqref{fo:stateN} where the whole system is controlled by $(\bar{\h\alpha}^{u_1}, \ldots,\bar{\h\alpha}^{u_N})$. By similar arguments as the ones used to prove propagation of chaos in non-controlled graphon systems, e.g. \cite[Theorem 4.1]{phamnonlinear}, we have
	$$
		\frac{1}{N}\sum_{i=1}^N \EE \Big[\sup_{0\leq t\leq T } |\tilde{X}^i_t-\o X^{u_i}_t|^2\Big]\leq C q_{N,d,\varkappa}+\frac{C}{N}\sum_{i=1}^N\EE[|\xi^{u_i}-\xi^{N,i}|^2],
	$$ 
	for some constant $C$. Then following the standard estimate method for graphon SDEs (see e.g. the proof of Theorem~\ref{thm:sta} for the forward part), we obtain that for all $i\in[N]$,
	$$
		\EE \Big[\sup_{0\leq t\leq T}|\tilde{X}^i_t-X^i_t|^2\Big]\leq C \int_0^T\EE[|\h\alpha^{u_i}_t-\bar{\hat{\alpha}}^{u_i}_t|^2]dt.
	$$ 
    By the definitions of $\hat{\alpha}$ and $\bar{\hat{\alpha}}$, as a consequence of Theorem~\ref{thm:conti}, we have
    for some constant $C$,
 $
 	\int_0^T\cW^2_2(\cL(\hat{\alpha}^{u_1}_t),\cL(\hat{\alpha}^{u_2}_t))dt\leq C( \cW^2_2(\cL(\xi^{u_1}),\cL(\xi^{u_2})) +\|G(u_1,\cdot)-G(u_2,\cdot)\|_1)$. Hence combining with Theorem~\ref{thm:sta}, we have
    \begin{align*}
   	 &\frac{1}{N}\sum_{i=1}^N\EE \Big[\sup_{0\leq t\leq T}|\tilde{X}^i_t-X^i_t|^2\Big]
	 \\
	 & \leq
   	 C \sum_{i=1}^N\int_{\frac{i-1}{N}}^{\frac{i}{N}}\bigg(\cW^2_2(\cL(\xi^u),\cL(\xi^{u_i})) +\|G(u,\cdot)-G(u_i,\cdot)\|_1
	  +\int_0^T\EE[|\h\alpha^u_t-\bar{\hat{\alpha}}^{u}_t|^2]dt\bigg)du
	  \\
  	  & \leq \frac{C}{N}+ \frac{CK}{N}+C\bigg(\int_{I}\EE \bigl[ \vert \bar{\xi}^u - \xi^{u} \vert^2 \bigr]du +\|G-G_N\|_1 \bigg)
	  \leq \frac{C'}{N} ,
    \end{align*}
    where the second inequality comes from the fact that there are at most $K$ intervals containing a discontinuity of $u\mapsto\cL(\xi^u)$ and $u\mapsto G(u,v)$ for every $v\in I$. On the other hand, by the stability result (Theorem~\ref{thm:sta}) again, we have
    $$
    	\int_I \EE \Big[ \sup_{0 
        \leq t \leq T} \vert \bar{X}^u_{t} - X_{t}^{u} \vert^2\Big]du \leq  C\left(\int_I\EE \bigl[ \vert \bar{\xi}^u - \xi^{u} \vert^2 \bigr]du +\|G-G_N\|_1 \right)
        \leq \frac C N. 
    $$
  Combining the above three results, we conclude the desired result. When $u_i=i/N,i\in[N]$, the state processes of system \eqref{fo:stateN} under control $(\o\alpha^1,\ldots,\o\alpha^N)$ are exactly $(\tilde{X}^1,\ldots,\tilde{X}^N)$ defined above. Hence the convergence results also follow from the analysis above.    
\end{proof}

As a consequence of Theorem~\ref{thm:poc}, the next theorem provides a bound on the gap between the $N$-agent cost obtained by using the control profile coming from the GMFC problem and the GMFC optimal cost.
\begin{theorem}\label{thm:app}
	Suppose Assumptions~\ref{ass:e--1}--\ref{ass:e--3} hold, and $\frac{1}{N}\sum_{i=1}^N \EE[|\xi^{i/N}-\xi^{N,i}|^2]\to 0$. Then, with $\underline{\bar{\alpha}}^N=(\bar{\alpha}^1,\cdots,\bar{\alpha}^N)$, we have
		 $J^N( \u{\o\alpha}^N) \leq J^*+Cr(N), $
    	where $r(N)$ converges to $0$ as $N$ goes to $\infty$. Moreover, if in addition Assumptions~\ref{ass:iniepsilon} and \ref{ass:e--4} hold and $\frac{1}{N}\sum_{i=1}^N \EE [|\xi^{i/N}-\xi^{N,i}|^2] \leq C/N$ for some constant $C$, then $r(N)$ is of order $q_{N,d,\varkappa}$.  
\end{theorem}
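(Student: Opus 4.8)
The plan is to bridge $J^N(\underline{\bar{\alpha}}^N)$ and $J^*$ through the \emph{decoupled} cost $\bar{J}^N=\frac1N\sum_{i=1}^N\bar{J}^{N,i}$ built from the independent processes $\bar{X}^i$ of \eqref{fo:stateN-barXi}. Writing
$$J^N(\underline{\bar{\alpha}}^N)-J^*=\bigl[J^N(\underline{\bar{\alpha}}^N)-\bar{J}^N\bigr]+\bigl[\bar{J}^N-J^*\bigr],$$
I would bound each bracket in absolute value; the one-sided inequality in the statement is then immediate, and in fact one obtains the stronger bound $|J^N(\underline{\bar{\alpha}}^N)-J^*|\le Cr(N)$.

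For the term $\bar{J}^N-J^*$, I observe that, by the step structure of $G_N$, $\bar{J}^N$ is exactly the optimal cost \eqref{eq:optimalcost} associated with the FBSDE \eqref{eq:fbo} driven by the step graphon $G_N$ in place of $G$, with $\frac1N\sum_i$ reading as $\int_I\cdots\,du$. The stability estimate (Theorem~\ref{thm:sta}), applied with $G'=G_N$ (admissible by Assumption~\ref{ass:e--1}), controls the integrated differences of $(X,Y,Z)$ by $C(\int_I\EE|\xi^u-\xi^{N,u}|^2du+\|G-G_N\|_1)$, which vanishes by Assumption~\ref{ass:e--2} and the hypothesis on the initial conditions. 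I would then transfer this to the costs using the local Lipschitz bound on $f,g$ (Assumption~\ref{ass:d--1}), the Lipschitz continuity of the minimizer $\hat{\alpha}$, and the inequality $\cW_2^2(\cG^u_t,\cG^{N,u}_t)\le C(\|G(u,\cdot)-G_N(u,\cdot)\|_1+\int_I\cW_2^2(\mu^v_t,\mu^{N,v}_t)\,dv)$ already established in the proof of Theorem~\ref{thm:sta}, so that $|\bar{J}^N-J^*|\to0$.

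For the term $J^N(\underline{\bar{\alpha}}^N)-\bar{J}^N$, both costs use the same controls $\bar{\alpha}^i$, so only the states and interaction measures differ: $X^i$ of \eqref{fo:stateN} versus $\bar{X}^i$, and $\bar{\nu}^{N,i}_t=\sum_j\kappa^{N,ij}\delta_{X^j_t}$ versus $\cG^{N,i}_t$. I would split $\cW_2(\bar{\nu}^{N,i}_t,\cG^{N,i}_t)\le\cW_2(\bar{\nu}^{N,i}_t,\bar{\cG}^{N,i}_t)+\cW_2(\bar{\cG}^{N,i}_t,\cG^{N,i}_t)$; the first summand, after squaring, taking expectation and averaging, is bounded by a constant times $\frac1N\sum_i\EE[\sup_t|X^i_t-\bar{X}^i_t|^2]$ (using that the weights $\kappa^{N,ij}$ have uniformly bounded row and column sums under Assumptions~\ref{ass:graphon} and \ref{ass:e--1}), which is controlled by the propagation of chaos estimate of Theorem~\ref{thm:poc} together with the $O(\|G-G_N\|_1)$ stability bound of Theorem~\ref{thm:sta}, while the second summand is controlled in expectation by Lemma~\ref{lem:esempi}. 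Invoking once more the local Lipschitz property of $f,g$ and averaging over $i$, the bracket is dominated by $(\frac1N\sum_i\EE[\sup_t|X^i_t-\bar{X}^i_t|^2])^{1/2}+(\frac1N\sum_i\EE[\sup_t\cW_2^2(\bar{\cG}^{N,i}_t,\cG^{N,i}_t)])^{1/2}$, both of which tend to $0$. Combining the two brackets gives $r(N)\to0$, and under Assumptions~\ref{ass:iniepsilon} and \ref{ass:e--4} the quantitative forms of Theorem~\ref{thm:poc} and Lemma~\ref{lem:esempi} yield the announced rate $q_{N,d,\varkappa}$.

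The main obstacle I anticipate is the cost comparison under only \emph{local} Lipschitz control of $f$ and $g$: the prefactor $1+|x|+|x'|+|\alpha|+|\alpha'|+\|\mu\|_2+\|\mu'\|_2$ must be bounded uniformly in $N$ by Hölder's inequality against the uniform moment estimates, which is exactly where Assumption~\ref{ass:iniepsilon} (moments beyond order two, at every $t$) enters; tracking the precise exponent through this step, and in particular whether the Lipschitz comparison costs a square root relative to the squared estimates of Theorem~\ref{thm:poc} and Lemma~\ref{lem:esempi}, is the delicate part of the rate bookkeeping. A secondary but essential point is that the empirical-measure estimate of Lemma~\ref{lem:esempi} must be applied to the \emph{independent} particles $\bar{X}^j$ rather than the coupled $X^j$, which is precisely the reason for routing the argument through the decoupled cost $\bar{J}^N$.
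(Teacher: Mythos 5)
Your proposal is correct and follows essentially the same route as the paper's proof: both arguments rest on the local Lipschitz bound for $f,g$ from Assumption~\ref{ass:d--1}, the propagation-of-chaos estimate of Theorem~\ref{thm:poc}, the stability and continuity results (Theorems~\ref{thm:sta} and~\ref{thm:conti}), and Lemma~\ref{lem:esempi} applied to the independent particles $\bar X^j$, with the only difference being organizational — the paper compares $J^N(\underline{\bar\alpha}^N)$ with $J^*$ directly by chaining $\bar\nu^{N,i}\to\bar\cG^{N,i}\to\cG^{N,i}\to\cG^u$ inside the integrand rather than routing through the intermediate decoupled cost $\bar J^N$ (a decomposition the paper reserves for Theorem~\ref{th:limit cost}). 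Your concern about whether the Lipschitz comparison costs a square root in the rate is legitimate, but it applies equally to the paper's own bookkeeping and does not distinguish the two arguments.
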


\begin{proof}
	We detail the proof for the case with convergence rate. The first assertion, without convergence rate, follows by similar arguments. 
	It suffices to prove
	$\bigl|J^N(\u{\o\alpha}^N)-J^*\bigr|\leq C q_{N,d,\varkappa}.$ 
    Let $X^i$ be the state process of the $i$-th particle under control $\underline{\o\alpha}^N$. By Assumption~\ref{ass:d--1}, 
	\begin{align*}
		&\EE[|f(t,X^i_t,\o\nu^{N,i}_t,\o \alpha^i_t)-f(t,X^u_t,\cG^u_t,\h\alpha^u_t)|^2]\\
		& \leq C\EE\Big[\bigl(1+|X^i_t|^2+|X^u_t|^2+|\o\alpha^i_t|^2+|\h\alpha^u_t|^2+\|\o\nu^{N,i}_t\|^2_2+\|\cG^u_t\|^2_2\bigr)
		\\
		&\qquad\qquad \times \bigl(|X^i_t-X^u_t|^2+|\o\alpha^i_t-\h\alpha^u_t|^2+\cW^2_2(\o\nu^{N,i}_t,\cG^u_t)\big)\Big]
		\\
		&\leq C \bigg(\EE[|X^i_t-X^u_t|^2]+\EE[|\o\alpha^i_t-\h\alpha^u_t|^2]+\cW^2_2(\o\nu^{N,i}_t,\bar{\cG}^{N,i}_t)
		\\
		&\qquad\qquad +\cW^2_2(\bar{\cG}^{N,i}_t,\cG^{N,i}_t)+\cW^2_2({\cG}^{N,i}_t,\cG^{u}_t)\bigg).
	\end{align*}
	First of all, we have $\cW^2_2(\bar{\cG}^{N,i}_t,\cG^{N,i}_t) \leq q_{N,d,\varkappa}$ by Lemma~\ref{lem:esempi}.
	Then by the propagation of chaos result (Theorem~\ref{thm:poc}), the continuity result (Theorem~\ref{thm:conti}), and the stability result (Theorem~\ref{thm:sta}), it follows that 
	\begin{align*}
		\frac{1}{N}&\sum_{i=1}^N\EE\bigg[\int_0^T f(t,X^i_t,\o\nu^{N,i}_t,\o \alpha^i_t)dt\bigg]-\int_I \EE\bigg[\int_0^T f(t,X^u_t,\cG^u_t,\h\alpha^u_t)dt\bigg]du \\
		&\leq C \bigg( \int_{I}\EE\bigg[\int_0^T|\o\alpha^i_t-\h\alpha^u_t|^2dt\bigg] du +\int_I \EE\bigg[\int_0^T|X^{\lceil Nu\rceil}_t-X^u_t|^2 dt\bigg]du+q_{N,d,\varkappa}\\
        & \quad\quad\quad+\int_I\int_0^T\cW^2_2({\cG}^{N,\lceil Nu\rceil}_t,\cG^{u}_t)dtdu\bigg)\\
		& \leq C q_{N,d,\varkappa}.
	\end{align*}
	Similar arguments can be applied to $g(X^i_T,\o\nu^{N,i}_T)-g(X^u_T,\cG^u_T)$. Combining this with the above, we obtain that 
    $
        \bigl|J^N(\u{\o\alpha}^N)-J^*\bigr|\leq C q_{N,d,\varkappa}.
    $
\end{proof}

Finally, we show that %
$(\bar{\alpha}^1,\cdots,\bar{\alpha}^N)$ is approximately optimal for the $N$-agent problem
and the optimal $N$-agent cost converges to the optimal GMFC cost. 
\begin{theorem}
	\label{th:limit cost}
	Under Assumptions~\ref{ass:e--1}--\ref{ass:e--3}, 
	$
		\lim_{N \rightarrow + \infty} \inf_{\u \beta^N\in \cM\bH^2_N} J^N( \u \beta^N) = J^*. 
	$
	Moreover, %
	$
        \lim_{N \rightarrow + \infty} J^N( \u{\o \alpha}^N) = J^*.
    	$
    If, in addition, Assumptions~\ref{ass:iniepsilon} and \ref{ass:e--4} hold and $\frac{1}{N}\sum_{i=1}^N\EE[|\xi^{i/N}-\xi^{N,i}|^2]\leq C/N$ for some constant $C$, then 
    \begin{equation}\label{eq:jj}
		\Big|J^*-\inf_{\u \beta^N\in \cM\bH^2_N} J^N( \u \beta^N)\Big| 
		+
		|J^N( \u{\o \alpha}^N) - J^*|
		\leq C q_{N,d,\varkappa}, 
	\end{equation}
     In particular, $|J^N( \u{\o \alpha}^N) -\inf_{\u \beta^N\in \cM\bH^2_N} J^N( \u \beta^N)| \le C q_{N,d,\varkappa}$.
\end{theorem}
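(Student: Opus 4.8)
The plan is to squeeze $\inf_{\u\beta^N}J^N(\u\beta^N)$ between two bounds differing by $O(q_{N,d,\varkappa})$. The \textbf{upper bound} is immediate: since $\u{\o\alpha}^N\in\cM\bH^2_N$ we have $\inf_{\u\beta^N}J^N(\u\beta^N)\le J^N(\u{\o\alpha}^N)$, and Theorem~\ref{thm:app} (whose proof in fact establishes the two-sided bound $|J^N(\u{\o\alpha}^N)-J^*|\le Cq_{N,d,\varkappa}$, and $|J^N(\u{\o\alpha}^N)-J^*|\to 0$ under Assumptions~\ref{ass:e--1}--\ref{ass:e--3} alone) yields $J^N(\u{\o\alpha}^N)\le J^*+Cq_{N,d,\varkappa}$. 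This already proves the ``Moreover'' statement $J^N(\u{\o\alpha}^N)\to J^*$ with its rate, and gives $\limsup_N\inf_{\u\beta^N}J^N(\u\beta^N)\le J^*$. It remains to prove the matching \textbf{lower bound} $J^N(\u\beta^N)\ge J^*-Cq_{N,d,\varkappa}$ for \emph{every} admissible $\u\beta^N=(\beta^1,\dots,\beta^N)$.

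For the lower bound I would fix such a $\u\beta^N$, denote by $(X^i)_i$ the associated states of~\eqref{fo:stateN}, and attach to each agent $i$ the optimal GMFC data $(X^{u_i},Y^{u_i},Z^{u_i},\h\alpha^{u_i})$ of label $u_i=i/N$ solving~\eqref{eq:fbo}, with $J^{u_i}$ the corresponding GMFC cost. The computation mirrors the sufficient-condition argument of Theorem~\ref{th:suffi}, but now compares the finite-system trajectory $X^i$ against the limiting trajectory $X^{u_i}$. I would expand $J^{N,i}(\u\beta^N)-J^{u_i}$ into a terminal and a running part, bound the terminal part from below by the convexity of $g$ (Assumption~\ref{ass:d--3}) and the running part by rewriting $f=H^{u_i}-b\cdot Y^{u_i}-\sigma\cdot Z^{u_i}$ and using convexity of $H^{u_i}$ in $(x,\mu,\alpha)$, then insert the adjoint equation of~\eqref{eq:fbo} to turn $\EE[(X^i_T-X^{u_i}_T)\cdot Y^{u_i}_T]$ into a time integral. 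A decisive point is the choice of coupling in the Lions-convexity inequalities: coupling the neighborhood empirical measure $\bar\nu^{N,i}_t=\sum_j\kappa^{N,ij}\delta_{X^j_t}$ with the weighted empirical measure $\o\cG^{N,i}_t=\sum_j\kappa^{N,ij}\delta_{X^{u_j}_t}$ of the limit particles by pairing $X^j_t$ with $X^{u_j}_t$ turns the $\partial_\mu$ contributions into genuinely first-order terms $\partial_\mu g(\cdots)(X^{u_j}_T)\cdot(X^j_T-X^{u_j}_T)$ (and similarly for $f$), which match the graphon $\partial_\mu$ terms carried by the adjoint through the Fubini/representative-variable identity~\eqref{eq:24:1:2}. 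After averaging $\tfrac1N\sum_i$ the matched first-order terms cancel and the strong convexity in $\alpha$ leaves the nonnegative remainder $\tfrac{\lambda}{N}\sum_i\EE\int_0^T|\beta^i_t-\h\alpha^{u_i}_t|^2\,dt$, so that $J^N(\u\beta^N)\ge \tfrac1N\sum_{i=1}^N J^{u_i}-(\text{errors})$.

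It then remains to show that every error is $O(q_{N,d,\varkappa})$. Four sources appear: (i) replacing $\kappa^{N,ij}$ by $\tilde G(u_i,u_j)$, controlled by $\|G-G_N\|_1\le C/N$ (Assumption~\ref{ass:e--4}); (ii) replacing the limit-particle empirical measures $\o\cG^{N,i}$ by the mean fields $\cG^{N,i}$, controlled by $\EE[\cW_2^2(\o\cG^{N,i},\cG^{N,i})]\le q_{N,d,\varkappa}$ from Lemma~\ref{lem:esempi}; (iii) replacing $\cG^{N,i}$ by $\cG^{u_i}$ and discretizing $\tfrac1N\sum_i J^{u_i}$ toward $\int_I J^u\,du=J^*$, controlled by the continuity estimate of Theorem~\ref{thm:conti} together with the piecewise-Lipschitz Assumption~\ref{ass:e--4}; and (iv) the mismatch between the evaluation measure $\o\cG^{N,i}$ of the convexity step and the measure $\cG^{u_i}$ of the adjoint, which multiplies $(X^j_T-X^{u_j}_T)$, handled by Young's inequality, absorbing a small multiple of $\tfrac1N\sum_j\EE[\sup_t|X^j_t-X^{u_j}_t|^2]$ into the quadratic remainder (this deviation is itself controlled, via forward stability and Gronwall, by $\tfrac{\lambda}{N}\sum_i\EE\int_0^T|\beta^i_t-\h\alpha^{u_i}_t|^2\,dt$ up to $O(q_{N,d,\varkappa})$). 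Crucially, in (ii)--(iii) the first-order-in-measure fluctuations have zero mean, so that after taking expectations the leading error is second order, of size $\EE[\cW_2^2]=O(q_{N,d,\varkappa})$ rather than $O(q_{N,d,\varkappa}^{1/2})$, which is what preserves the advertised rate. Collecting the four contributions gives $J^N(\u\beta^N)\ge J^*-Cq_{N,d,\varkappa}$.

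Combining the two bounds yields $|J^*-\inf_{\u\beta^N}J^N(\u\beta^N)|\le Cq_{N,d,\varkappa}$; together with Theorem~\ref{thm:app} this is~\eqref{eq:jj}, and the ``in particular'' assertion follows from the triangle inequality $|J^N(\u{\o\alpha}^N)-\inf J^N|\le|J^N(\u{\o\alpha}^N)-J^*|+|J^*-\inf J^N|$. Under Assumptions~\ref{ass:e--1}--\ref{ass:e--3} alone the same scheme gives the two unquantified limits, each error vanishing as $N\to\infty$ by $\|G-G_N\|_1\to0$ (Assumption~\ref{ass:e--2}), the propagation-of-chaos limit (Theorem~\ref{thm:poc}) and the continuity of $u\mapsto J^u$. \textbf{The main obstacle} I anticipate is the lower bound, and within it step~(iv) together with the graphon bookkeeping: one must reproduce the representative-variable identity~\eqref{eq:24:1:2} at the discrete level, choose the coupling so that the $\partial_\mu$ terms become first order and match the (non-symmetric) adjoint weights $\tilde G(v,u)$, and carry out the second-order expansion uniformly in $N$ so that none of the discrepancies degrades the rate below $q_{N,d,\varkappa}$.
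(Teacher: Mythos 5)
Your proposal is correct in its essentials and rests on the same core mechanism as the paper's proof: an upper bound from Theorem~\ref{thm:app}, and a lower bound for arbitrary $\u\beta^N$ obtained by the duality/convexity computation (convexity of $g$ and $H$, the adjoint equation, the first-order condition $\partial_\alpha H=0$ at the reference control, and Young's inequality to absorb the first-order error terms into the strong-convexity remainder $\lambda\frac1N\sum_i\EE\int_0^T|\beta^i_t-\cdot|^2dt$). The structural difference is the choice of comparison system. The paper introduces the intermediate \emph{step-graphon} system $(\bar X^i,\bar Y^i,\bar Z^i)$ of \eqref{eq:fbn}--\eqref{fo:stateN-barXi}, driven by $G_N$, so that the adjoint weights in the duality step are exactly the finite system's $\kappa^{N,ji}$ and the identity \eqref{eq:24:1:2} holds exactly at the discrete level; it proves $J^N(\u\beta^N)\ge\o J^N-Cq_{N,d,\varkappa}$ and only then passes from $\o J^N$ to $J^*$ via the stability result (Theorem~\ref{thm:sta}) and $\|G-G_N\|_1$. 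You instead compare directly with the limit system at labels $u_i=i/N$, which forces you to carry two extra error sources inside the duality computation: the mismatch $\kappa^{N,ij}$ versus $\tilde G(u_i,u_j)$ and the Riemann-sum discretization of the label integrals $\int_I\tilde G(v,u_i)\,\t\EE[\cdots]\,dv$, both of which you correctly propose to control by Assumption~\ref{ass:e--4} together with the continuity estimate of Theorem~\ref{thm:conti} (and, in the unquantified case, by \ref{ass:e--2}--\ref{ass:e--3} and dominated convergence). Note also that your step (ii) requires a Fournier--Guillin-type bound for the empirical measure of the \emph{limit} particles $X^{u_j}$ rather than the step-graphon particles $\bar X^j$ of Lemma~\ref{lem:esempi}; this works because the $X^{u_j}$ are likewise independent and interact only through their laws, but it is an additional (if routine) verification. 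Both routes deliver the stated rate; the paper's two-stage decomposition is somewhat cleaner bookkeeping-wise because it isolates the graphon-convergence error in a single application of Theorem~\ref{thm:sta}, whereas yours trades that for a heavier single duality computation.
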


\begin{proof}
	  We prove in detail the case with convergence rate, i.e., \eqref{eq:jj}; the case without convergence rate is obtained in a similar way by taking limits. With given control profile $\underline{\beta}^N$, we first compare the difference between $J^{N,i}(\underline{\beta}^N)$ and $\bar{J}^{N,i}$.  Here we recall that $\bar{\nu}^{N,i}$ is the rescaled neighborhood empirical measure of particles $X^i,i\in[n],$ in \eqref{fo:stateN} under control $\underline{\beta}^N$. We obtain
	\begin{align*}
		&J^{N,i}(\u \beta)-\bar{J}^{N,i}
		\\
		&= \EE \biggl[ \int_{0}^T \bigl( f(s,X_{s}^i,\bar{\nu}_{s}^{N,i},\beta_{s}^i)  
		- f(s,\bX_{s}^i,\cG^{N,i}_{s},\balpha_{s}^{i})  \bigr) ds \biggr]
        + \EE \bigl[ g(X^i_{T},\bar{\nu}^{N,i}_{T}) - g(\bX_{T}^i,\cG^{N,i}_{T}) \bigr].
	\end{align*}
	Then we proceed to estimate the above. %
	By subtracting terms, we write
	\begin{equation}
		\label{eq:28:2:30}
		J^{N,i}(\u \beta)-\bar{J}^{N,i}
		= T_{1}^i + T_{2}^i,
	\end{equation} 
	with
	\begin{equation*}
		\begin{split}
			T_{1}^i &= \EE \t\EE \bigl[( X^i_{T}-\bX^i_{T}) \cdot \bar{Y}^i_{T}  \bigr]
			+ \EE \biggl[ \int_{0}^T \bigl( f(s,X_{s}^i,\bar{\nu}_{s}^{N,i},\beta_{s}^i)  - f(s,\bX_{s}^i,\cG^{N,i}_{s},\balpha_{s}^{i})  \bigr) ds \biggr],
			\\
			T_{2}^i &= \EE \bigl[ g(X^i_{T},\bar{\nu}_{T}^{N,i}) - g(\bX_{T}^i,\cG^{N,i}_{T}) \bigr] 
			- \EE \bigl[(X_{T}^i - \bX_{T}^i)\cdot \partial_{x} g(\bX_{T}^i,\cG^{N,i}_{T}) \bigr]
			\\
			&\phantom{??????????}-  \sum_{j=1}^N \kappa^{N,ji}\t\EE \EE \bigl[(\t X_{T}^{i} - \t{\bX}_{T}^{i})\cdot \partial_{\mu} g(\bX_{T}^j,\cG^{N,j}_{T})(\t{\bX}_{T}^{i}) \bigr]
			\\
			&=: T_{2,1}^i - T_{2,2}^i - T_{2,3}^i.
		\end{split}
	\end{equation*}
	
	\noindent{\bf Analysis of $T_{2}^i$.} 
	Let us first analyze the third term in $T^i_2$, i.e., $T^i_{2,3}$. By subtracting and adding $\frac{1}{N}\sum_{j=1}^N\sum_{k=1}^N\kappa^{N,ji}(\t X_{T}^{i} - \t{\bX}_{T}^{i})\cdot \partial_{\mu} g(\t{\bX}_{T}^{j,k},\cG^{N,j}_{T})(\t{\bX}_{T}^{i})$, we have
	\begin{align*}
		&\sum_{j=1}^N\kappa^{N,ji}\t\EE \EE  \bigl[(\t X_{T}^{i} - \t{\bX}_{T}^{i})\cdot  \partial_{\mu} g(\bX_{T}^j,\cG^{N,j}_{T})(\t{\bX}_{T}^{i}) \bigr]
		\\
		&= \sum_{j=1}^N\kappa^{N,ji}\EE \bigl[(X_{T}^{i} - \bX_{T}^{i})\cdot \partial_{\mu} g(\bX_{T}^{j},\cG^{N,j}_{T})(\bX_{T}^{i}) \bigr]+[\EE|X^i_t-\bX^i_T|^2]^{1/2}\mathcal{O}(N^{-1/2}),
	\end{align*}
	where $(\t{\o X}^{j,k})_{k=1}^N$ are $N$ independent copies of $\o X^j$ and where we used the Lipschitz continuity of $\partial_\mu g$, Cauchy-Schwarz inequality and the uniformly finite $L^2$ norm of $\bX^i_T$ over all $i\in[N]$.
	Hence,
	\begin{align*}
		&\frac{1}{N}\sum_{i=1}^N T^i_{2,3} 
		\\
		&=\frac{1}{N^2}\sum_{k=1}^N\sum_{i=1}^N\EE[(X^{\vartheta_i}_T-\bX^{\vartheta_i}_T)\cdot\partial_{\mu} g(\bX_{T}^{i,k},\cG^{N,i}_{T})(\bX_T^{\vartheta_i})]
		\\
		&\qquad+\frac{1}{N}\sum_{i=1}^N[\EE|X^i_t-\bX^i_T|^2]^{-1/2}\mathcal{O}(N^{1/2})\\
		&=\frac{1}{N}\sum_{i=1}^N\EE[(X^{\vartheta_i}_T-\bX^{\vartheta_i}_T)\cdot\partial_{\mu} g(\bX_{T}^i,\cG^{N,i}_{T})(\bX_T^{\vartheta_i})]+\frac{1}{N}\sum_{i=1}^N[\EE|X^i_t-\bX^i_T|^2]^{-1/2}\mathcal{O}(N^{1/2}).
	\end{align*}

	Recall the definition of $\o\cG^{N,i}$ and Lemma~\ref{lem:esempi}. %
    Using Assumption~\ref{ass:e--2}, we have
	\begin{align*}
		\frac{1}{N} \sum_{i=1}^N T_{2,3}^i
		&= \frac{1}{N}\sum_{i=1}^N\EE[(X^{\vartheta_i}_T-\bX^{\vartheta_i}_T)\cdot\partial_{\mu} g(\bX_{T}^i,\o\cG^{N,i}_{T})(\bX_T^{\vartheta_i})]
		\\
		&\qquad+\frac{1}{N}\sum_{i=1}^N[\EE|X^i_t-\bX^i_T|^2]^{1/2}\mathcal{O}(N^{-1/2}+q_{N,d,\varkappa}).
	\end{align*}
	
	Since $\partial_xg$ is Lipschitz, $(\o X^i)_{i\in[N]}$ are bounded in $\mathbf{S}^2$-norm and using Assumption~\ref{ass:d--1}, by applying similar arguments for terms $T^i_{2,1}$ and $T^i_{2,2}$ and noting that $N^{-1/2}\leq q_{N,d,\varkappa}$, we conclude that
	\begin{equation*}
		\begin{split}
			\frac{1}{N} \sum_{i=1}^N T_{2}^i &= \frac{1}{N} \sum_{i=1}^N  
			\biggl\{ \EE \bigl[ g(X^i_{T},\bar{\nu}_{T}^{N,i}) - g(\bX_{T}^i,\bar{\cG}_{T}^{N,i}) \bigr] 
			- \EE \bigl[ (X_{T}^i - \bX_{T}^i)\cdot \partial_{x} g(\bX_{T}^i,\bar{\cG}_{T}^{N,i})  \bigr]
			\\
			&\hspace{15pt} - \EE \t \EE \bigl[ (X_{T}^{\vartheta_i} - \bX_{T}^{\vartheta_i})\cdot \partial_{\mu}g
			\bigl(\bX_{T}^{i},\bar{\cG}_{T}^{N,i})(\bX_{T}^{\vartheta_i}) \bigr]
			\biggr\} \\
			&\hspace{15pt}  +  \bigl( 1 +  \frac{1}{N}\sum_{i=1}^N\bigl[{\mathbb E} \vert X_{T}^{i} - \bX_{T}^{i}
			\vert^2 \bigr]^{1/2} \bigr) {\mathcal O}(q_{N,d,\varkappa}),
		\end{split}
	\end{equation*}
	where we replace $\cG^{N,i}_T$ by $\o\cG^{N,i}_T$ in $g$ by using the local Lipschitz property of $g$, and Assumption~\ref{ass:d--1}. %
	Then by the convexity property of $g$, we get\footnote{For three functions $f,g,h,$, we use the notation $f(N) \ge g(N)\mathcal{O}(h(n))$ to mean that $f(n)$ is greater than a function which is of order $g(N)\mathcal{O}(h(n))$.}
	\begin{equation}
		\label{eq:14:3:3}
		\frac{1}{N} \sum_{i=1}^N T_{2}^i \geq  \bigl( 1 +  \bigl[\frac{1}{N}\sum_{i=1}^N{\mathbb E} \vert X_{T}^{i} - \bX_{T}^{i}
		\vert^2 \bigr]^{1/2} \bigr) {\mathcal O}(q_{N,d,\varkappa}).
	\end{equation}
	
	\noindent{\bf Analysis of $T_{1}^i$.} 
	Using again It\^o's formula and then Fubini's theorem, it follows:
	\begin{equation}
		\begin{split}
			T_{1}^i &=  \EE \biggl[ \int_{0}^T 
			\bigl( H(t,X_{t}^i,\bar{\nu}_{t}^{N,i},\bar{Y}^i_t,\bar{Z}_{t}^i,\beta_{t}^i)  
			- H(t,\bX_{t}^i,\cG^{N,i}_{t},\bar{Y}^i_t,\bar{Z}_{t}^i,\balpha_{t}^{i})  \bigr) ds \biggr]
			\\
			&\hspace{15pt}
			- \EE \biggl[ \int_{0}^T (X_{t}^i - \bX_{t}^i)\cdot \partial_{x} H(t,\bX_{t}^i,\cG^{N,i}_{t},\bar{Y}_{t}^i,
			\bar{Z}_{t}^i,\balpha_{t}^{i})
			dt \biggr]
			\\
			&\hspace{15pt}
			- \sum_{i=1}^N\kappa^{N,ji}\EE \t\EE \biggl[ \int_{0}^T (\t X_{t}^{i} - \t{\bX}_{t}^{i})\cdot 
			\partial_{\mu} H(t,\bX_{t}^i,\cG^{N,i}_{t},\bar{Y}_{t}^i,\bar{Z}_{t}^i,\balpha_{t}^{i})(\t{\Theta}_{t}^{i})
			ds \biggr]
			\\
			&=: T_{1,1}^i - T_{1,2}^i - T_{1,3}^i. 
		\end{split}
	\end{equation}
	Notice that the Hamiltonian $H$ is locally Lipschitz w.r.t. all parameters except $t$,the processes	$(\bar{X}^i,\bar{Y}^i)_{i\in[N]}$ are $\mathbf{S}^2$ bounded, and $(\bar{Z}^i,\bar{\alpha}^i)_{i\in[N]}$ are $\mathbf{H}^2$ bounded. By using again Assumption~\ref{ass:d--1} %
	and Lemma~\ref{lem:esempi}, we have
	\begin{align*}
		\frac{1}{N}\sum_{i=1}^N T_{1,1}^i 
		&=  \frac{1}{N}\sum_{i=1}^N\EE \biggl[ \int_{0}^T 
		\bigl(  H(t,X_{t}^i,\bar{\nu}_{t}^N,\bar{Y}^i_t,\bar{Z}_{t}^i,\beta_{t}^i)  
		- H(t,\bX_{t}^i,\bar{\cG}_{t}^{N,i},\bar{Y}^i_t,\bar{Z}_{t}^i,\balpha_{t}^{i})  \bigr) dt \biggr] 
		\\
		&\qquad 
		+ {\mathcal O}(q_{N,d,\varkappa}).
	\end{align*}
	Similarly, by Lipschitz property of $\partial_x H$, we have
	\begin{equation*}
		\begin{split}
			\frac{1}{N}\sum_{i=1}^NT_{1,2}^i
			&= 
			\frac{1}{N}\sum_{i=1}^N\EE \biggl[ \int_{0}^T 
			( X_{t}^i - \bX_{t}^i)\cdot \partial_{x} H(t,\bX_{t}^i,\bar{\cG}_{t}^{N,i},\bar{Y}_{t}^i,\bar{Z}_{t}^i,\balpha_{t}^{i})
			dt \biggr]  
			\\
			&\qquad +  \frac{1}{N}\sum_{i=1}^N\biggl(\EE\bigl[ \int_{0}^T \vert  X_{t}^i - \bX_{t}^i \vert^2 dt\bigr] \biggr
			)^{1/2} {\mathcal O}(q_{N,d,\varkappa}).
		\end{split}
	\end{equation*}
	Finally, using similar arguments for $\partial_\mu H$ as the ones used for $\partial_\mu g$, we have
	\begin{equation*}
		\begin{split}
			\frac{1}{N} \sum_{i=1}^N T_{1,3}^i			
			&= \frac{1}{N} \sum_{i=1}^N \EE \t \EE \biggl[ \int_{0}^T (X_{t}^{\vartheta_i} -\bX_{t}^{\vartheta_i})\cdot 
			\partial_{\mu} H(t,\bX_{t}^{i},\bar{\cG}^{N,i}_{t},\bar{Y}_{t}^{i},\bar{Z}_{t}^i,\balpha_{t}^{i})(\bX_{t}^{\vartheta_i}) dt \biggr]
			\\
			&\hspace{30pt}
			+ \frac{1}{N}\sum_{i=1}^N\biggl( \EE\bigl[ \int_{0}^T \vert  X_{t}^{i} - \bX_{t}^i \vert^2 dt\bigr] \biggr)^{1/2} {\mathcal O}(q_{N,d,\varkappa}).
		\end{split}
	\end{equation*} 
	
	Since $\o\alpha^i_t$ is the unique minimizer of $\alpha\mapsto H(t,\bX_{t}^i,\cG^{N,i}_{t},\bar{Y}_{t}^i,\bar{Z}_{t}^i,\alpha)$, we have (recalling that $A = \RR^k$ in this section) 
    $
        \partial_{\alpha} H(t,\bX_{t}^i,\cG^{N,i}_{t},\bar{Y}_{t}^i,\bar{Z}_{t}^i,\balpha_{t}^{i})=0,
    $ 
    and thus
	\begin{align*}
		&\EE \biggl[ \int_{0}^T 
		(\beta^i_{t} - \balpha_{t}^{i}) \cdot \partial_{\alpha} H(t,\bX_{t}^i,\bar{\cG}_{t}^{N,i},\bar{Y}_{t}^i,\bar{Z}_{t}^i,
		\balpha_{t}^{i}) dt \biggr]
            \\
		&=  \EE \biggl[ \int_{0}^T 
		(\beta^i_{t} - \balpha_{t}^{i}) \cdot \partial_{\alpha} H(t,\bX_{t}^i,\cG^{N,i}_{t},\bar{Y}_{t}^i,\bar{Z}_{t}^i,\balpha_{t}^{i}) dt \biggr] 
		+  \biggl(  \EE \int_{0}^T \vert  \beta_{t}^i - \balpha_{t}^{i}  \vert^2 dt \biggr)^{1/2} {\mathcal O}(q_{N,d,\varkappa})
            \\
		&=   \biggl(  \EE \int_{0}^T \vert  \beta_{t}^i - \balpha_{t}^{i}  \vert^2 dt \biggr)^{1/2} {\mathcal O}(q_{N,d,\varkappa}).
	\end{align*}
	Hence using again the convexity of $H$, we get that 
	\begin{align*}
			\frac{1}{N} \sum_{i=1}^N T_{1}^i 
			&\geq 
			{\mathcal O}(q_{N,d,\varkappa}) \biggl( 1+	\frac{1}{N} \sum_{i=1}^N \sup_{0 \leq t \leq T} \EE  \bigl[ \vert  X_{t}^i - \bX_{t}^i \vert^2 \bigr] 
			+	\frac{1}{N} \sum_{i=1}^N \EE \int_{0}^T  \vert  \beta_{t}^i - \balpha_{t}^i   \vert^2  dt \biggr)^{1/2}.
	\end{align*}
	Finally by \eqref{eq:28:2:30}, \eqref{eq:14:3:3} and the above inequality, we deduce that 
	\begin{align*}
			&J^N( \u \beta^N) 
			\\
			&
			\geq \o J^N
			+{\mathcal O}(q_{N,d,\varkappa}) \biggl(  1+\frac{1}{N} \sum_{i=1}^N \sup_{0 \leq t \leq T} \EE  \bigl[ \vert  X_{t}^i - \bX_{t}^i \vert^2 \bigr]  +	\frac{1}{N} \sum_{i=1}^N \EE \int_{0}^T  \vert  \beta_{t}^i - \balpha_{t}^i   \vert^2  dt\biggr)^{1/2}.
	\end{align*}
	Note that by standard estimate for forward SDEs, we have, for some constant $C$, 
	\begin{equation*}
		\frac{1}{N} \sum_{i=1}^N\sup_{0 \leq t \leq T} \EE \vert  X_{t}^i -\bX_{t}^i \vert^2 \leq C \frac{1}{N} \sum_{i=1}^N \EE \int_{0}^T 
		\vert  \beta_{t}^i - \balpha_{t}^i  \vert^2 dt. 
	\end{equation*}
	By the $\mathbf{S}^2$ regularity of $X^i$ and $\bar{X}^i$, it then follows that for a new constant $C$  
	\begin{equation}
		J^N(\u \beta^N) \geq \o J^N - C q_{N,d,\varkappa}.
	\end{equation}
	 Finally, combining this with the stability result of graphon mean field FBSDE system (Theorem~\ref{thm:sta}), we have that
	\begin{equation}
		J^N(\u \beta^N) \geq J^* - C q_{N,d,\varkappa}-C\|G_N-G\|_{1},
	\end{equation}
	which shows that 
	\begin{equation}\label{eq:j}
		 \inf_{\u \beta^N} J^N (\u \beta^N) \geq J^*- C q_{N,d,\varkappa}. 
	\end{equation}
	Combining this with Theorem~\ref{thm:app}, 
    we obtain $\Big|J^*-\inf_{\u \beta^N\in \cM\bH^2_N} J^N( \u \beta^N)\Big| \leq C q_{N,d,\varkappa}$. Moreover, from \eqref{eq:j}, we have
    $
        J^N (\u {\o\alpha}^N) \geq J^*- C q_{N,d,\varkappa}.
    $
    Again by Theorem~\ref{thm:app}, we deduce $|J^N( \u{\o \alpha}^N) - J^*| \leq C q_{N,d,\varkappa}$.

\end{proof}

{
\small

\bibliographystyle{abbrv}
\bibliography{bib_control}

}

\begin{appendix}

\section{Additional details for Section~\ref{sec:eu}}
\label{app:proofs-sec2}

First, we recall the following result of Blackwell and Dubins~\cite{blackwell1983extension}. 
\begin{lemma}[Blackwell-Dubins]
\label{le:BlackwellDubins}
For any Polish space $B$ and any standard probability space $(\mathcal{U},\cF_{\mathcal{U}},\PP_{\mathcal{U}})$, there exists a measurable function $\rho_B:\cP(B)\times \mathcal{U}\to B$, satisfying
\vskip 0pt
i) for each $\nu\in\cP(B)$, the $B$-valued random variable $\rho_B(\nu,\cdot)$ has distribution $\nu$;
\vskip 0pt
ii) for $\PP_{\mathcal{U}}$-almost every $\mathfrak{u}\in \mathcal{U}$, the function $\cP(B)\ni\nu\mapsto \rho_B(\nu,\mathfrak{u})\in B$ is continuous.
\end{lemma}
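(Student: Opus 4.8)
The plan is to construct $\rho_B$ explicitly by a nested, randomized refinement of $B$ driven by the conditional masses of $\nu$ and by the randomness of $\mathfrak u$, after two standard reductions; the marginal and measurability assertions will then be soft, while clause (ii) is the real content.

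First I would reduce the ambient spaces. Since $(\mathcal{U},\cF_{\mathcal{U}},\PP_{\mathcal{U}})$ is a standard probability space, and nonatomic in the cases of interest (e.g. $(\Upsilon,\iota)$), it is measure-isomorphic to $([0,1],\mathrm{Leb})$ and hence to the product $((0,1)^{\NN},\mathrm{Leb}^{\otimes\NN})$; I would therefore take $\mathfrak u=(u_1,u_2,\dots)$ to be a sequence of i.i.d. uniforms, which supplies the independent randomness the construction consumes level by level. Since $B$ is Polish, I would fix a homeomorphic embedding $\iota$ of $B$ onto a subset of the Hilbert cube $[0,1]^{\NN}$; the pushforward $\iota_{*}\colon\cP(B)\to\cP([0,1]^{\NN})$ is then a homeomorphism onto its image for the weak topologies, so it suffices to build $\rho$ on the cube and pull back.

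Next I would carry out the construction. Fix a refining sequence of finite Borel partitions $\mathcal P_1\prec\mathcal P_2\prec\cdots$ of the cube into boxes whose diameters vanish and which generate the topology. Given $\nu$ and $\mathfrak u$, I would select a nested sequence of atoms $C_1\supset C_2\supset\cdots$, where at step $k$ the child $C_{k+1}\subset C_k$ is chosen by comparing the coordinate $u_{k+1}$ against the cumulative conditional masses $\nu(\,\cdot\mid C_k)$ of the children of $C_k$, additional coordinates of $\mathfrak u$ being used to randomize the placement of the separating boundaries inside each box. Setting $\rho_B(\nu,\mathfrak u):=\bigcap_k C_k$, property (i) follows because the selection probabilities are exactly the conditional masses, so $\PP_{\mathcal{U}}(\rho_B(\nu,\cdot)\in C)=\nu(C)$ on every atom $C$, and atoms generate the Borel $\sigma$-algebra; joint measurability is immediate from the measurability of $(\nu,\mathfrak u)\mapsto$ (conditional masses, comparisons) at each finite stage.

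The continuity clause (ii) is the main obstacle. The mechanism is that at each level the choice of child is \emph{locally constant in} $\nu$ as soon as (a) the comparison of $u_{k+1}$ with the relevant threshold is strict and (b) $\nu$ charges no mass on the randomized separating boundaries; under (a)--(b) along the whole path, weak convergence $\nu_n\to\nu$ forces $\nu_n$ to reproduce the choices made for $\nu$ up to any fixed level once $n$ is large, so $\rho_B(\nu_n,\mathfrak u)$ and $\rho_B(\nu,\mathfrak u)$ share an atom of $\mathcal P_k$ and thus converge. The delicate step—where a deterministic quantile would fail, since its discontinuities can occur at \emph{every} value of $\mathfrak u$ for suitable atomic $\nu$—is to secure a single $\PP_{\mathcal{U}}$-null set off which (a)--(b) hold: the randomized thresholds make ties a null event, and the randomized boundaries avoid the countably many atoms of $\nu$ almost surely. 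I expect essentially all the work to lie in this bookkeeping of the exceptional set, i.e. in establishing the precise Blackwell--Dubins conclusion that $\nu_n\to\nu$ implies $\rho_B(\nu_n,\cdot)\to\rho_B(\nu,\cdot)$ $\PP_{\mathcal{U}}$-almost everywhere, which is exactly the continuity property invoked in the sequel.
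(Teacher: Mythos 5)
The paper offers no proof of this lemma: it is recalled verbatim as a known result and attributed to Blackwell and Dubins \cite{blackwell1983extension}, so your proposal has to stand on its own. Your construction (embed $B$ homeomorphically in the Hilbert cube, refine along nested partitions, choose children by comparing i.i.d.\ uniform coordinates against conditional masses) is essentially the classical Blackwell--Dubins iterated conditional-quantile construction, and it does deliver clause (i), joint measurability, and the \emph{weak} continuity conclusion: for each fixed $\nu$ and each sequence $\nu_n\to\nu$, one has $\rho_B(\nu_n,\mathfrak u)\to\rho_B(\nu,\mathfrak u)$ for $\PP_{\mathcal U}$-a.e.\ $\mathfrak u$, the null set depending on $\nu$.

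The genuine gap is your claim to ``secure a single $\PP_{\mathcal U}$-null set'' off which the choices are locally constant in $\nu$, i.e.\ the strong form of (ii) in which one null set works for every $\nu$ simultaneously. No construction can achieve this, so the step must fail. Indeed, suppose $\rho_B$ satisfied (i) and the strong (ii), take two points $a\neq b$ in $B$ and set $\nu_p=p\delta_a+(1-p)\delta_b$. For a.e.\ $\mathfrak u$ the map $p\mapsto\rho_B(\nu_p,\mathfrak u)$ would be continuous on the connected set $[0,1]$ and (by Fubini and (i)) valued in the two-point set $\{a,b\}$ on a dense set of $p$, hence constant in $p$; then $\PP_{\mathcal U}(\rho_B(\nu_p,\cdot)=a)$ could not equal $p$ for all $p$. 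Concretely, in your bookkeeping the events ``$u_{k+1}$ ties with a threshold of $\nu$'' and ``a randomized boundary carries an atom of $\nu$'' are null \emph{for each fixed} $\nu$, but their union over $\nu\in\cP(B)$ exhausts $\mathcal U$: for every $\mathfrak u$ there is some $\nu$ whose atoms or masses sit exactly on the boundaries and thresholds generated by that $\mathfrak u$. The exceptional set therefore necessarily depends on $\nu$, which is exactly the weak form you state in your final sentence (and which is all that Lemma~\ref{lem:coupleini} actually uses, via uniform integrability on the class $\mathfrak P$); your proposal should be read as a proof of that weaker, correct statement, not of clause (ii) as written.
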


\begin{lemma}\label{lem:coupleini}
    Under Assumptions~\ref{ass:dynamic-1} and \ref{ass:dynamic-4}, we can define $\o\xi^u,u\in I$, such that $\cL(\o\xi^u)=\cL(\xi^u)=\mu^u_0$ for each $u\in I$ and $u\mapsto \o\xi^u\in L^2(\bar\Omega;\RR^d)$ is measurable.   
\end{lemma}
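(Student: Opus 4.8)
The plan is to build the coupled initial conditions via the Blackwell--Dubins selection result (Lemma~\ref{le:BlackwellDubins}), applied with the Polish space $B=\RR^d$ and the standard probability space $(\Upsilon,\mathcal{B}(\Upsilon),\iota)$. This furnishes a jointly measurable map $\rho:=\rho_{\RR^d}:\cP(\RR^d)\times\Upsilon\to\RR^d$ such that, for each $\nu\in\cP(\RR^d)$, the variable $\rho(\nu,\cdot)$ has law $\nu$. First I would set $\o\xi^u:=\rho(\mu_0^u,\o\Lambda)$ for every $u\in I$. Since $\o\Lambda$ is uniform on $\Upsilon$, property~(i) of Lemma~\ref{le:BlackwellDubins} immediately gives $\cL(\o\xi^u)=\mu_0^u=\cL(\xi^u)$, while $\o\xi^u$ is $\o\cF_0=\sigma(\o\Lambda)$-measurable by construction. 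Because $\cL(\o\xi^u)=\mu_0^u$, the power-mean (Jensen) inequality together with Assumption~\ref{ass:dynamic-4} yields the uniform bound $\sup_{u\in I}\EE[|\o\xi^u|^2]\le\big(\sup_{u\in I}\int_{\RR^d}|x|^{2+\varepsilon}\mu_0^u(dx)\big)^{2/(2+\varepsilon)}<\infty$, so in particular each $\o\xi^u$ belongs to $L^2(\o\Omega;\RR^d)$.

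The remaining and principal task is the measurability of $I\ni u\mapsto\o\xi^u\in L^2(\o\Omega;\RR^d)$, which I would obtain through Pettis's measurability theorem. The starting point is the joint measurability of $(u,\mathfrak{u})\mapsto\rho(\mu_0^u,\mathfrak{u})$ on $I\times\Upsilon$: by Assumption~\ref{ass:dynamic-1} the map $u\mapsto\mu_0^u$ is Borel measurable into the Polish space $\cP(\RR^d)$, hence $(u,\mathfrak{u})\mapsto(\mu_0^u,\mathfrak{u})$ is measurable (the Borel $\sigma$-algebra of the Polish product factors as a product), and composing with the measurable selector $\rho$ gives the claim. Using the linear isometry $h\mapsto h(\o\Lambda)$ from $L^2(\Upsilon,\iota;\RR^d)$ onto the subspace of $\o\cF_0$-measurable elements of $L^2(\o\Omega;\RR^d)$, it then suffices to show that $u\mapsto\rho(\mu_0^u,\cdot)\in L^2(\Upsilon,\iota;\RR^d)$ is measurable.

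Since $L^2(\Upsilon,\iota;\RR^d)$ is a separable Hilbert space, Pettis's theorem reduces this to weak measurability. For any fixed $h\in L^2(\Upsilon,\iota;\RR^d)$, the integrand $(u,\mathfrak{u})\mapsto\rho(\mu_0^u,\mathfrak{u})\cdot h(\mathfrak{u})$ is jointly measurable and, by the Cauchy--Schwarz inequality together with the uniform $L^2$ bound above, integrable in $\mathfrak{u}$ for each $u$; hence $u\mapsto\int_\Upsilon\rho(\mu_0^u,\mathfrak{u})\cdot h(\mathfrak{u})\,d\iota(\mathfrak{u})$ is measurable by Fubini's theorem. This gives weak, and therefore strong, measurability, concluding the argument. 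I expect this measurability step to be the main obstacle: the key is to rely only on the joint measurability of the Blackwell--Dubins selector, so that no pointwise regularity of $\rho$ in its measure argument is required, and to transport $\o\cF_0$-measurable $L^2$-valued maps to $L^2(\Upsilon,\iota;\RR^d)$, where separability makes Pettis's theorem applicable.
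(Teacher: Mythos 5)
Your construction is the same as the paper's (apply Blackwell--Dubins with $(\Upsilon,\cB(\Upsilon),\iota)$ and set $\o\xi^u=\rho_{\RR^d}(\mu_0^u,\o\Lambda)$), but your measurability argument takes a genuinely different route, and it is correct. The paper exploits property (ii) of Lemma~\ref{le:BlackwellDubins} (a.e.\ pointwise continuity of $\nu\mapsto\rho_{\RR^d}(\nu,\mathfrak{u})$) together with the uniform $(2+\varepsilon)$-moment bound of Assumption~\ref{ass:dynamic-4}, which gives uniform integrability of $(\rho_{\RR^d}(\nu,\cdot))_\nu$ over the moment-bounded set $\mathfrak{P}$ and hence \emph{continuity} of $\mathfrak{P}\ni\nu\mapsto\rho_{\RR^d}(\nu,\cdot)\in L^2(\Upsilon;\RR^d)$; measurability in $u$ then follows by composition with the measurable map $u\mapsto\mu_0^u$. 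You instead use only property (i) and the joint measurability of the selector, pass to $L^2(\Upsilon,\iota;\RR^d)$ via the isometry $h\mapsto h(\o\Lambda)$, and invoke Fubini plus Pettis's theorem in a separable Hilbert space. Your route is more economical for the lemma as stated: it needs Assumption~\ref{ass:dynamic-4} only for the uniform $L^2$ bound (to ensure $\o\xi^u\in L^2$ and integrability in the Fubini step), and it avoids the uniform-integrability argument entirely. What the paper's approach buys in exchange is the stronger conclusion that $\nu\mapsto\rho_{\RR^d}(\nu,\cdot)$ is $L^2$-continuous on $\mathfrak{P}$, which is reused later (e.g., in the proof of Theorem~\ref{thm:conti}, where continuity or Lipschitz continuity of $u\mapsto\mu_0^u$ must transfer to $u\mapsto\o\xi^u\in L^2(\o\Omega;\RR^d)$); Pettis's theorem alone would not deliver that.
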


\begin{proof}
    First note that by Lemma~\ref{le:BlackwellDubins} and choosing $(\mathcal{U},\cF_{\mathcal{U}},\PP_{\mathcal{U}})=(\Upsilon,\cB(\Upsilon),\iota)$ with $\iota$ being the Lebesgue measure, then there exists a measurable function $\rho_{\RR^d}:\cP(\RR^d)\times \Upsilon \to \RR^d$ such that $\cL(\rho_{\RR^d}(\nu,\cdot))=\nu$, and moreover $\nu\mapsto \rho_{\RR^d}(\nu,r)$ is continuous for a.e. $r\in \Upsilon$. 
    Thanks to Assumption~\ref{ass:dynamic-4}, 
    $C_0:=\sup_{u\in I}\int_{\RR^d}|x|^{2+\varepsilon}\mu^u_0(dx)$ is finite. 
    Let us now introduce
    $$
        \mathfrak{P}\coloneqq \left\{ \nu\in\cP(\RR^d):\int_{\RR^d}|x|^{2+\varepsilon}\nu(dx)\leq C_0 \right\}.
    $$
    Note that the family $(\rho_{\RR^d}(\nu,\cdot))_{\nu\in\mathfrak{P}}$ is uniformly integrable in $L^2(\Upsilon;\RR^d)$. Thus the map $\mathfrak{P}\ni\nu\mapsto \rho_{\RR^d}(\nu,\cdot)\in L^2(\Upsilon;\RR^d)$ is continuous. 
    Since all $\mu^u_0,$ $u\in I,$ belong to $\mathfrak{P}$, by Assumption~\ref{ass:dynamic-1}, we have that $u\mapsto \rho_{\RR^d}(\mu^u_0,\cdot)\in L^2(\Upsilon;\RR^d)$ is measurable. We finish the proof by defining $\o\xi^u(\o\Lambda)=\rho^u(\o\Lambda)$ where $\rho^u(\o\Lambda) = \rho_{\RR^d}(\mu^u_0,\o\Lambda)$.
\end{proof}

 \begin{proof}[Proof of Lemma~\ref{lem:barx}]
 Following similar arguments as in the proof of Theorem~\ref{thm:exf}, for  each $u\in I$, let  $\o X^{u,0}_t=\o\xi^u$ for all $t\in[0,T]$. For $n \ge 1$, denote $\mu^{u,n-1}:=\cL(\o X^{u,n-1})$, and for each $u\in I$, define $\cG^{u,n-1}: [0,T] \to \cP(\RR^d)$ as
	$$
        \cG^{u,n-1}_t(dx)=\t G\mu^{n-1}_t[u](dx):=\int_I \t G(u,v)\mu^{v,n-1}_t(dx)du,
    $$
 and define
	\begin{equation*}
		\o X^{u,n}_t=\o X^{u,n-1}_0+\int_0^t b^u(s,\o X^{u,n-1}_s,\cG^{u,n-1}_s,\o\alpha^u_s) ds+\int_0^t \sigma^u(s,\o X^{u,n-1}_s,\cG^{u,n-1}_s,\o\alpha^u_s) d\o W_s.
	\end{equation*} 
By similar iteration argument, it suffices to prove that for any $t\in[0,T]$, $I\ni u\mapsto \bar{X}^{u,n}_t\in L^2(\bar{\Omega};\RR^d)$ is measurable provided $(\o X^{u,n-1})_{u\in I}$ satisfy the same property. Then by \cite[Lemma A.3]{wu2022}, we have for any $t\in[0,T]$, the functions $u\mapsto b^u(t,\o X^{u,n-1}_t,\cG^{u,n-1}_t,\o\alpha^u_t) \in L^2(\o\Omega;\RR^d)$ and $u \mapsto \sigma^u(t,\o X^{u,n-1}_t,\cG^{u,n-1}_t,\o\alpha^u_t) \in L^2(\o\Omega;\RR^{d\times m})$ are measurable. By applying \cite[Lemma A.4]{wu2022} to $\int_0^tb^u(\cdot)dt$ and the definition of stochastic integral with respect to $\o W$ (as a limit of finite sum), we have 
$$
	u\mapsto \bigg(\int_0^tb^u(s,\o X^{u,n-1}_s,\cG^{u,n-1}_s,\o\alpha^u_s)ds,\int_0^t\sigma^u(s,\o X^{u,n-1}_s,\cG^{u,n-1}_s,\o\alpha^u_s)d\bar{W}_s\bigg)\in L^2(\o\Omega;\RR^d\times \RR^{d})
$$ 
is measurable. 
Hence we can conclude. 
 \end{proof}

\section{Additional details for Section~\ref{sec:pontryagin}}
\label{app:proofs-sec4}

In this section, we provide auxiliary arguments for the proof of Theorem~\ref{thm:nece}.

We start with the following result.
\begin{lemma}\label{lem:sd}
	For $\epsilon>0$ small enough and given $\alpha,\beta\in \cM\bH^2_T$, we denote by $\widehat{\alpha}$ the admissible control defined by $\widehat{\alpha}=\alpha+\epsilon\beta$, and denote by $ \widehat{X}= X^{\widehat{\alpha}}$ the corresponding controlled dynamics. Under Assumptions~\ref{ass:c-1}--\ref{ass:c-3}, we have
	\begin{equation}
		\label{fo:variation}
		\lim_{\epsilon\searrow 0}\sup_{u\in I}\EE\left[\sup_{0\le t\le T}\Bigl| \frac{\widehat{X}^u_t-X^u_t}{\epsilon}-V^u_t
		\Bigr|^2\right]=0.
	\end{equation}
\end{lemma}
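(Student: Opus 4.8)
The plan is to follow the classical first-order expansion argument for mean field control (as in \cite{carmona2015forward}), with the graphon interaction term controlled through the identity $\|\tilde G(u,\cdot)\|_1 = 1$ and the uniform estimates of Assumptions~\ref{ass:c-1}--\ref{ass:c-2}. Write $V^{u,\epsilon}_t := (\widehat X^u_t - X^u_t)/\epsilon$. First I would establish a uniform-in-$u$ forward stability estimate: since $\widehat\alpha^u - \alpha^u = \epsilon\beta^u$, subtracting the dynamics of $\widehat X^u$ and $X^u$, using the Lipschitz bound of Assumption~\ref{ass:dynamic-3} together with $\cW_2(\widehat\cG^u_t,\cG^u_t) \le \int_I \tilde G(u,v)\cW_2(\cL(\widehat X^v_t),\cL(X^v_t))\,dv \le \sup_{v}\EE[|\widehat X^v_t - X^v_t|^2]^{1/2}$, and then BDG and Gronwall after taking $\sup_u$, yields $\sup_{u}\EE[\sup_{t}|\widehat X^u_t - X^u_t|^2] \le C\epsilon^2\sup_u\|\beta^u\|_{\bH}^2$. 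In particular $\sup_{u,\epsilon}\|V^{u,\epsilon}\|_{\mathbf{S}^2} < \infty$, and $\widehat X^u_t \to X^u_t$ in $\mathbf{S}^2$ uniformly in $u$ as $\epsilon \searrow 0$.

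Next I would derive the SDE satisfied by the error $E^{u,\epsilon}_t := V^{u,\epsilon}_t - V^u_t$. Introducing the interpolations $X^{u,\lambda}_t := X^u_t + \lambda(\widehat X^u_t - X^u_t)$ and $\alpha^{u,\lambda}_t := \alpha^u_t + \lambda\epsilon\beta^u_t$, the fundamental theorem of calculus and the chain rule for $L$-derivatives express $[b^u(t,\widehat X^u_t,\widehat\cG^u_t,\widehat\alpha^u_t) - b^u(t,X^u_t,\cG^u_t,\alpha^u_t)]/\epsilon$ as $\int_0^1 \partial_x b^u \cdot V^{u,\epsilon}_t\,d\lambda + \int_0^1\int_I \tilde G(u,v)\tilde\EE[\partial_\mu b^u(\cdots)(\tilde X^{v,\lambda}_t)\cdot\tilde V^{v,\epsilon}_t]\,dv\,d\lambda + \int_0^1 \partial_\alpha b^u\cdot\beta^u_t\,d\lambda$, with the derivatives evaluated at the interpolated points, and symmetrically for $\sigma^u$. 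Subtracting the dynamics of $V^u$ from \eqref{fo:Voft}, the leading terms are linear in $E^{u,\epsilon}$ (and in $\tilde E^{v,\epsilon}$ under the graphon integral) with coefficients equal to the derivatives evaluated at $\Phi^u_t$, while the differences between derivatives at the interpolated points and at $\Phi^u_t$ are collected into a remainder $R^{u,\epsilon}$. Applying It\^o, BDG, Cauchy--Schwarz, the uniform boundedness of $\partial_x,\partial_\alpha,\partial_\mu(b^u,\sigma^u)$ (Assumption~\ref{ass:c-2}) together with $\|\tilde G(u,\cdot)\|_1 = 1$ to dominate the graphon integral by $\sup_v\EE[|E^{v,\epsilon}_t|^2]^{1/2}$, and then a Gronwall argument after taking $\sup_u$, gives $\sup_u\EE[\sup_t|E^{u,\epsilon}_t|^2] \le C\sup_u\EE[\int_0^T|R^{u,\epsilon}_t|^2\,dt]$.

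It then remains to show the remainder vanishes, $\sup_u\EE[\int_0^T|R^{u,\epsilon}_t|^2\,dt]\to 0$. This is the main obstacle, and the point where the graphon structure genuinely complicates the classical argument. By Step~1 the interpolated arguments converge to $\Phi^u_t$ in $\mathbf{S}^2$ as $\epsilon\to 0$, so by the continuity of the partial derivatives in Assumption~\ref{ass:c-1} --- and in particular the $L^2$-continuity of $(x,X,\alpha)\mapsto\partial_\mu(b^u,\sigma^u)(t,x,\PP_X,\alpha)(X)$, which is exactly what is needed to handle the shift of the measure-derivative argument from $\tilde X^{v,\lambda}_t$ to $\tilde X^v_t$ --- the integrands of $R^{u,\epsilon}$ converge to $0$ pointwise; combined with the uniform bounds of Assumption~\ref{ass:c-2} and the uniform $\mathbf{S}^2$-bound on $V^{u,\epsilon}$, a dominated-convergence (Vitali) argument lets me pass to the limit. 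The delicate part is maintaining \emph{uniformity in the label} $u$: this is secured because all bounds in Assumptions~\ref{ass:c-1}--\ref{ass:c-2} are uniform in $u$, and because the Step~1 estimate controls $\widehat X^u - X^u$ uniformly in $u$, so the dominating functions and the rate of decay in $\epsilon$ may be taken independent of $u$. Once $\sup_u\EE[\int_0^T|R^{u,\epsilon}_t|^2\,dt]\to 0$ is established, \eqref{fo:variation} follows.
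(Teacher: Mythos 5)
Your proposal is correct and follows essentially the same route as the paper's proof: a first-order Taylor expansion along the interpolated paths $X^{u,\lambda}$, $\alpha^{u,\lambda}$, a decomposition of the error dynamics into a linear part plus a remainder collecting the differences of derivatives at interpolated versus nominal arguments, control of the graphon term via $\|\tilde G(u,\cdot)\|_1=1$ and a $\sup_u$--Gronwall argument, and finally a dominated-convergence step using the continuity and uniform boundedness of the derivatives from Assumptions~\ref{ass:c-1}--\ref{ass:c-2}. The only cosmetic difference is that you first prove the $O(\epsilon^2)$ stability bound on $\widehat X^u-X^u$ to control the interpolated arguments, whereas the paper bounds the error process $\widehat V^u$ directly; the two are equivalent for the purpose of the remainder estimate.
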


\begin{proof}
    
    We use the notations
	$\widehat{\Phi}_{t}^{u}=(\widehat{X}^u_{t},\widehat{\cG}^u_t,\widehat{\alpha}^u_{t}))$ and 
	$\widehat{V}^u_t= \epsilon^{-1}(\widehat{X}^u_t-X^u_t)-V^u_t$.
	Notice that $\widehat{V}^u_0=0$ for all $u\in I$ and we have
	\begin{equation*}
		\begin{split}
			&d\widehat{V}^u_t
			\\
			&= \biggl[\frac{1}{\epsilon}\bigl[b^u(t,\widehat{\Phi}^u_{t})-b^u(t,\Phi^u_{t})
			\bigr]-\partial_x b^u(t,\Phi^u_{t}) \cdot V^u_t-\partial_\alpha b^u(t,\Phi^u_{t})\cdot \beta^u_t
			\\
			&\qquad\qquad -\int_I\tilde{G}(u,v)\t\EE
			\bigl[
			\partial_\mu b^u(t,\Phi^u_{t})(\t X^v_t)\cdot \t V^v_t \bigr]dv \biggr]dt\\
			&\phantom{?}+\biggl[\frac{1}{\epsilon}\bigl[\sigma^u(t,\widehat{\Phi}^u_{t})-\sigma^u(t,\Phi^u_{t})
			\bigr]-\partial_x \sigma^u(t,\Phi^u_{t}) \cdot V^u_t-\partial_\alpha \sigma^u(t,\Phi^u_{t})\cdot \beta^u_t
			\\
			&\qquad\qquad-\int_I\tilde{G}(u,v)\t\EE
			\bigl[
			\partial_\mu \sigma^u(t,\Phi^u_{t})(\t X^v_t)\cdot \t V^v_t \bigr]dv \biggr]dW^u_t\\
			&=:\w V^{u,1}_tdt + \w V^{u,2}_t dW^u_t.
		\end{split}
	\end{equation*} 
	    Let us first compute the $dt$-term, i.e., $\w V^{u,1}_t$. Note that, for each $t\in[0,T]$ and $\epsilon>0$, we have
	\begin{equation*}
		\begin{split}
			\frac{1}{\epsilon}\left[b^u(t,\w \Phi_t^{u})-b^u(t,\Phi^u_t)\right]
			&=\int_0^1\partial_x b^u\bigl(t,\Phi_t^{\lambda,u}\bigr) \cdot (\w V^u_t+ V^u_t)d\lambda
			+\int_0^1\partial_\alpha b^u\bigl(t,\Phi_t^{\lambda,u}\bigr) \cdot \beta_t d\lambda
			\\
			&\hspace{15pt}+\int_0^1\int_I \tilde{G}(u,v) \t\EE\bigl[
			\partial_ \mu b^u \bigl(t,\Phi_t^{\lambda,u}\bigr)\bigl(\t X_t^{\lambda,v}\bigr)\cdot (\tilde{\widehat{V}}^v_t+\t V^v_t) \bigr]dv d\lambda,
		\end{split}
	\end{equation*} 
	where we set $X^{\lambda,u}_t=X^u_t+\lambda\epsilon(\w V^u_t+V^u_t)$, 
	$\alpha^{\lambda,u}_t=\alpha^u_t+\lambda\epsilon\beta^u_t$, $ \cG_{t}^{\lambda,u}(dx) = \int_{ I} G(u,v)\mu^{\lambda,v}_t(dx)dv$, $\mu^{\lambda,u}_t=\cL(X^{\lambda,u}_t)$ and $\Phi^{\lambda,u}=(X^{\lambda, u},\cG^{\lambda,u},\alpha^{\lambda,u})$.
    Then, we deduce the following expression:
	\begin{equation*}
		\label{fo:VVepsilon}
		\begin{split}
			\w V^{u,1}_t&=\int_0^1\partial_xb^u\bigl(t,\Phi_{t}^{\lambda,u}\bigr) \cdot \w V^u_t d\lambda
			+\int_0^1\int_I \tilde{G}(u,v)\t\EE \bigl[ \partial_\mu b^u\bigl(t,\Phi^{\lambda,u}_t\bigr)(\t X^{\lambda,v}_t)\cdot \tilde{\widehat{V}}^v_t \bigr] dvd\lambda
			\\
			&\hspace{15pt}
			+\int_0^1 \bigl[\partial_xb^u\bigl(t,\Phi_{t}^{\lambda,u}\bigr)-\partial_x b^u(t,\Phi^u_{t}) \bigr] \cdot V^u_t d\lambda
			+\int_0^1\bigl[\partial_\alpha b^u\bigl(t,\Phi_{t}^{\lambda,u} \bigr)-\partial_\alpha b^u(t,\Phi^u_t) \bigr] \cdot \beta^u_t d\lambda
			\\
			&\hspace{30pt}+\int_0^1\tilde{G}(u,v)\t\EE
			\bigl[
			\bigl( \partial_\mu b^u \bigl(t,\Phi^{\lambda,u}_t\bigr)
			(\t X^{\lambda,v}_t)
			-\partial_{\mu} b^u(t,\Phi^u_t)(\t X^v_t) \bigr)\cdot\t V^v_t \bigr]
			dvd\lambda
			\\
			&= \int_0^1\partial_xb^u\bigl(t,\Phi^{\lambda,u}_t\bigr) \cdot \w V^u_t d\lambda
			+\int_0^1\int_I \tilde{G}(u,v)\t\EE \bigl[ \partial_\mu b^u\bigl(t,\Phi^{\lambda,u}_t\bigr)(\t X^{\lambda,v}_t)\cdot \tilde{\widehat{V}}^v_t \bigr] dvd\lambda
			\\
			&\qquad +I^{u,1}_t+ I^{u,2}_t+ I^{u,3}_t.
		\end{split}
	\end{equation*}
	First note that by Assumption~\ref{ass:c-2} and the definition of $\beta\in\cM\bH^2_T$, employing a standard estimate method for mean field systems and by the definition of $\widehat{V}^u_t$, we verify that
    $$
        \sup_{u\in I}\EE[\sup_{0\leq t\leq T}|\widehat{V}^u_t|^2]\leq C
    $$ 
    for some constant $C$. Combining with the property $\sup_{u\in I}\EE\bigl[\sup_{0\le t\le T}\left| V^u_t
	\right|^2\bigr]<\infty$, we have
	\begin{equation}\label{eq:xconti}
	\sup_{u\in I}\EE\bigl[\sup_{0\leq\lambda\leq 1}\sup_{0\leq t\leq T}|X^{\lambda,u}_t-X^u_t|^2\bigr]\xrightarrow[\epsilon \to 0]{} 0. 
	\end{equation}
	 
	We then prove that for all $u\in I$, $I^{u,1}$, $I^{u,2}$ and $I^{u,3}$ converge to $0$ in $L^2([0,T]\times\Omega)$ as $\epsilon\searrow 0$. First we have 
	\begin{eqnarray*}
		\EE\int_0^T|I^{u,1}_t|^2dt&=&\EE\int_0^T\left|\int_0^1[\partial_xb^u\bigl(t,\Phi^{\lambda,u}_t \bigr)-\partial_x b^u(t,\Phi^u_t)]V^u_t d\lambda
		\right|^2dt
		\\
		&\le&\EE\int_0^T\int_0^1|\partial_xb^u \bigl(t,\Phi_{t}^{\lambda,u} \bigr)-\partial_x b^u(t,\Phi^u_{t})|^2|V^u_t|^2 d\lambda dt.
	\end{eqnarray*} 
	By our assumption on the partial derivatives of coefficients, $\partial_{x} b$ is bounded and continuous in $x$, $\mu$ and $\alpha$. Combining with $\sup_{u\in I}\EE\sup_{0\le t\le T}\left| V^u_t
	\right|^2<\infty$ and the continuity \eqref{eq:xconti}, we have the above right-hand side converges to $0$ as $\epsilon\searrow 0$. Similar arguments apply to $I^{u,2}_{t}$ and $I^{u,3}_{t}$. By Assumption~\ref{ass:c-2}, $\partial_x b$ and $\partial_\mu b$ are uniformly bounded. Hence we have, for any $u\in I$ and $S \in [0,T]$,
	\begin{equation*}
		\EE\bigl[\sup_{0\le t\le S}|\int_0^t\w V^{u,1}_sds|^2\bigr] \leq \delta^{u,1}_{\epsilon} + C \int_0^S\EE\bigl[\sup_{0\le s\le t}|\w V^u_s|^2\bigr] dt,
	\end{equation*} 
	where $\delta^{u,1}_\epsilon$ is some small number converging to $0$ as $\epsilon\searrow 0$.
	
	Next, for the diffusion part $\w V^{u,2}_t$, we obtain similar inequalities in the same way. By Burkholder-Davis-Gundy's inequality, we get again that, for any $u\in I$ and $S \in [0,T]$,
	\begin{equation*}
		\EE\bigl[\sup_{0\le t\le S}|\int_0^t\w V^{u,2}_sdW^u_t|^2\bigr] \leq \delta^{u,2}_{\epsilon} + C \int_0^S\EE\bigl[\sup_{0\le s\le t}|\w V^u_s|^2\bigr] dt,
	\end{equation*}
	for a sequence $\delta^{u,2}_\epsilon$ converging to $0$. 
	Adding up the above two results and taking the supremum over $u\in I$, we obtain that there exists a sequence $\delta_\epsilon$ converging to $0$ such that
	\begin{equation*}
		\sup_{u\in I}\EE\bigl[\sup_{0\le t\le S}|\w V^{u}_t|^2\bigr] \leq \delta_{\epsilon} + C \int_0^S\sup_{u\in I}\EE\bigl[\sup_{0\le s\le t}|\w V^u_s|^2\bigr] dt,
	\end{equation*} 
	By Gronwall's inequality, we get that  
	$
		\lim_{\epsilon \searrow 0}\sup_{u\in I}  \EE \bigl[ \sup_{0 \le t \le T} \bigl\vert \w V^u_t \bigr\vert^2 \bigr] = 0. 
	$
\end{proof}

\begin{proof}[Proof of Lemma~\ref{le:gateaux}]
	Using the same notations as in the proof of Lemma~\ref{lem:sd}, we have
	\begin{equation*}
		\begin{split}
			\frac{d}{d\epsilon}J^u(\alpha+\epsilon\beta)\big|_{\epsilon=0}&=\lim_{\epsilon\searrow 0}\frac{1}{\epsilon}\EE\int_0^T
			\bigl[f^u(t,\w\Phi_t^u )- f^u(t,\Phi^u_t) \bigr]dt
            + \lim_{\epsilon\searrow 0}\frac{1}{\epsilon}\EE \bigl[g^u(\w X_T^u,\w \cG^u_T)-g^u(X^u_T,\cG^u_T) \bigr].
		\end{split}
	\end{equation*}
	We start with the first term on the right hand side of the above equation. We get
	\begin{equation*}
		\begin{split}
			&\lim_{\epsilon\searrow 0}\frac{1}{\epsilon}\EE\int_0^T \bigl[f^u \bigl(t,\w \Phi_t^u \bigr)- f^u(t,\Phi^u_t) \bigr]dt\\
			&\hspace{15pt}=\lim_{\epsilon\searrow 0}\frac{1}{\epsilon}\EE\int_0^T\int_0^1\frac{d}{d\lambda}f^u(t,\Phi^{\lambda,u}_t)d\lambda dt \\
			&\hspace{15pt}=\lim_{\epsilon\searrow 0}\EE\int_0^T\int_0^1 \Big[\partial_x f^u \bigl(t,\Phi^{\lambda,u}_t \bigr) \cdot (\w V^u_t+V^u_t)\\
			&\hspace{50pt}+\int_ I\tilde{G}(u,v)\t\EE \bigl[\partial_\mu f^u \bigl(t,\Phi_{t}^{\lambda,u} \bigr)(\t X_t^{\lambda,v})\cdot (\tilde{\widehat{V}}^v_t+\t V^v_t) \bigr]dv  +\partial_\alpha f^u \bigl(t,\Phi_{t}^{\lambda,u} \bigr) \cdot \beta^u_t
			\Big]d\lambda dt 
            \\
			&\hspace{15pt}=\EE\int_0^T \bigl[\partial_x f^u(t,\Phi^u_t) \cdot V^u_t+\int_I\tilde{G}(u,v) \t\EE \bigl[\partial_\mu f^u(t,\Phi^u_t)(\t X^v_t)\cdot \t V^v_t \bigr]dv+\partial_\alpha f^u(t,\Phi^u_t) \cdot \beta^u_t \bigr]dt.
		\end{split}
	\end{equation*}
	The last equality follows from using the continuity of the partial derivatives of $f$, the uniform convergence result proven in Lemma~\ref{lem:sd} and the uniform boundedness of partial derivatives. Similar arguments apply for the second term.
\end{proof}

\begin{proof}[Proof of Lemma~\ref{le:duality}]
	First note that by integration by parts, we have: 
	\begin{equation*}
		\begin{split}
			&Y^u_T \cdot V^u_T 
            \\
            &=Y^u_0 \cdot V^u_0+\int_0^TY^u_t \cdot dV^u_t+\int_0^TdY^u_t \cdot V^u_t+\int_0^Td[ Y^u, V^u]_t
			\\
			&= M^u_{T} + \int_0^T \biggl[Y^u_t \cdot \bigl( \partial_x b^u(t,\Phi^u_t) \cdot V^u_t \bigr) 
			+Y^u_t \cdot \int_I \tilde{G}(u,v)\t\EE \bigl[ \partial_\mu b^u(t,\Phi^u_t)(\t X^v_t)\cdot\t V^v_t \bigr]dv\\
			&\qquad +Y^u_t \cdot \bigl( \partial_\alpha b^u(t,\Phi^u_t) \cdot \beta^u_t \bigr) 
			-\partial_x H^u(t,\Phi^u_t,Y^u_t, Z^u_t) \cdot V^u_t
            \\
            &\qquad -\int_{I} \tilde{G}(v,u)\t\EE \bigl[ \partial_\mu H^v(t,\t \Phi^v_t,\t Y^v_t,\t Z^v_t)(X^u_t)\cdot V^u_t \bigr]dv +Z^u_t \cdot \bigl( \partial_x\sigma^u(t,\Phi^u_t) \cdot V^u_t \bigr)
			\\
			&\qquad +Z^u_t \cdot \int_I\tilde{G}(u,v)\t\EE \bigl[ \partial_\mu \sigma^u(t,\Phi^u_t)(\t X^v_t)\cdot\t V^v_t \bigr]dv+Z^u_t \cdot \bigl( \partial_\alpha\sigma^u(t,\Phi^u_t) \cdot \beta^u_t \bigr) \biggr]dt,
		\end{split}
	\end{equation*}
	where $(M^u_t)_{0\le t\le T}$ is a mean zero integrable martingale. By Fubini's theorem, we have
	\begin{equation*}
		\begin{split}
			&\EE\t\EE \bigl[ \partial_\mu H^v(t,\t \Phi^v_t,\t Y^v_t,\t Z^v_t)(X^u_t)\cdot V^u_t \bigr]
			\\
			&\phantom{?}=\t \EE\EE\bigl[ \partial_\mu H^v(t,\t \Phi^v_t,\t Y^v_t,\t Z^v_t)(X^u_t)\cdot V^u_t \bigr]
			\\
			&\phantom{?}=\EE\t\EE \bigl[ \partial_\mu H^v(t,\Phi^v_t,Y^v_t,Z^v_t)(\t X^u_t)\cdot \t V^u_t \bigr]
			\\
			&\phantom{?}=\EE\t\EE \bigl[ \bigl( \partial_\mu b^v(t,\Phi^v_t)(\t X^u_t)\cdot\t V^u_t \bigr) \cdot Y^v_t+ \bigl( \partial_\mu\sigma^v(t,\Phi^v_t)(\t X^u_t)\cdot\t V^u_t \bigr) \cdot Z^v_t+\partial_\mu f^v(t,\Phi^v_t)(\t X^u_t)\cdot\t V^u_t \bigr].
		\end{split}
	\end{equation*}
	By taking expectations on both sides of the above equation, we can conclude.
\end{proof}

\begin{proof}[Proof of Lemma~\ref{thm:j}]
	By Fubini's theorem,
\begin{align*}
    &\EE \left[ \partial_x g^u(X^u_T,\cG^u_T) \cdot V^u_T + \int_I \tilde{G}(v,u)\t\EE\bigl[ \bigl( \partial_\mu g^v(\t X^v_T,\cG^v_T)( X^u_T)\cdot  V^u_T \bigr) \bigr]dv\right]
    =\EE[ Y^u_T\cdot V^u_T].
\end{align*}
	Furthermore,
	\begin{align*}
		\int_I\EE\Bigl[\int_I \tilde{G}(u,v)\t\EE \bigl[ \partial_\mu g^u(X^u_T,\cG^u_T)(\t X^v_T)\cdot \t V^v_T \bigr]dv\Bigr]du
		&=\int_I\EE\Bigl[\int_I \tilde{G}(v,u)\t\EE\bigl[ \partial_\mu g^v(\t X^v_T,\cG^v_T)( X^u_T)\cdot  V^u_T \bigr]dv\Bigr]du.
	\end{align*}
    So, integrating over $u$ the second expectation in the expression \eqref{eq:gat} of the G\^ateaux derivative of $J^u$ (see Lemma~\ref{le:gateaux}), we obtain:
	\begin{eqnarray*}
		\int_I\EE \left[ \partial_x g^u(X^u_T,\cG^u_T) \cdot V^u_T + \int_I \tilde{G}(u,v)\t\EE \left[ \bigl( \partial_\mu g^u(X^u_T,\cG^u_T)(\t X^v_T)\cdot \t V^v_T \bigr)\right] dv \right]du
        = \int_I\EE[ Y^u_T\cdot V^u_T] du.
	\end{eqnarray*}
	Finally, using the expression derived in Lemma~\ref{le:duality} for $\EE[ Y_T\cdot V_T]$, combining it with \eqref{eq:gat} and canceling terms, we get the desired result.
\end{proof}

\section{Additional details for Section~\ref{sec:fbsde}}
\label{app:details-fbsde}

We provide here additional details for Section~\ref{sec:fbsde}. We start with a remark on the regularity of the optimal control. 

\begin{remark}
\label{rem:hat-alpha-measurable}
Let us explain why $(\h\alpha^u(t,X^u_t,\cG^u_t,Y^u_t,Z^u_{t}))_{t\in[0,T],u\in I}$ is in $\cM\bH^2_T$. We have 
$
    \partial_\alpha H^u(t,x,\mu,y,z,\alpha)=b^u_3(t)y+\sigma^u_3(t)z+\partial_\alpha f^u(t,x,\mu,\alpha).$
By Assumption~\ref{ass:d--1}, $(t,u,x,\mu,a)\mapsto (b^u_3(t),\sigma^u_3(t),\partial_\alpha f^u(t,x,\mu,a))$ is jointly measurable and, for each $(t,u,x,\mu)$, $a\mapsto \partial_\alpha f^u(t,x,\mu,a)$ is continuous. Then by the Implicit Function Theorem, $u\mapsto \h\alpha^u(t,x,\mu,y,z)$ is measurable for each $(t,x,\mu,y,z)$. Combining with the previous analysis that the mapping $[0,T] \times \RR^d \times {\mathcal P}_{2}(\RR^d) \times \RR^d \times \RR^{d \times m}
\ni (t,x,\mu,y,z)   \mapsto \hat{\alpha}^u(t,x,\mu,y,z)$ is Lipschitz continuous for each $u\in I$, we have that $(u,t,x,\mu,y,z)   \mapsto \hat{\alpha}^u(t,x,\mu,y,z)$ is jointly measurable. Hence the two integrals in \eqref{eq:fbo} are again well defined. By similar arguments used in the measurability part in the proof of Theorem~\ref{thm:exfb}, through the canonical coupling, on the canonical space, we can check that $u\mapsto (\o X^u,\o Y^u,\o Z^u)$ is measurable. Combining with the Lipschitz property of $\h\alpha^u(t,\cdot,\cdot,\cdot,\cdot)$ for each $u\in I$, then in turn we obtain $u\mapsto \h\alpha^u(t,\o X^u_t,\cG^u_t,\o Y^u_t,\o Z^u_{t}) \in L^2(\bar{\Omega},A)$ is measurable. This shows that $(\h\alpha^u(t,X^u_t,\cG^u_t,Y^u_t,Z^u_{t}))_{t\in[0,T],u\in I}$ is in $\cM\bH^2_T$.
\end{remark}

For completeness, we recall a useful result on $[\tilde{G}\mu]$, which corresponds to \cite[Lemma 3.1]{phamnonlinear}.
\begin{lemma}\label{lem:cg}
	For any $(\mu^u)_{u\in I},(\nu^u)_{u\in I}\in(\cP_2(\RR^d))^I$ such that $[\tilde{G}\mu]^u$ and $[\tilde{G}\nu]^u$, $u\in I$, are well defined in $\cP_2(\RR^d)$,  we have 
        $
        \int_{u\in I}\cW^2_2([\t G\mu]^u,[\t G\nu]^u )du\leq C \int_{u\in I}\cW^2_2(\mu^u,\nu^u )du.$
    Without supposing Assumption~\ref{ass:graphon}, we have
	$
        \sup_{u\in I}\cW_2([\t G \mu]^u,[\t G\nu]^u )\leq \sup_{u\in I}\cW_2(\mu^u,\nu^u ).
    $
\end{lemma}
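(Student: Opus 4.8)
The plan is to exploit the fact that $[\t G\mu]^u$ is, by its very definition \eqref{Lab}, a \emph{mixture} of the measures $(\mu^v)_{v\in I}$ with mixing weights $\t G(u,\cdot)$ that integrate to one, since $\int_I\t G(u,v)\,dv=1$ by the normalization \eqref{eq:kappa}. Squared $2$-Wasserstein distance behaves well under mixtures: one builds a coupling of the two mixtures by gluing together optimal couplings of the components. Concretely, for each $v\in I$ let $\pi^v\in\cP(\RR^d\times\RR^d)$ be an optimal coupling of $\mu^v$ and $\nu^v$, so that $\int|x-y|^2\,\pi^v(dx,dy)=\cW_2^2(\mu^v,\nu^v)$, and set, for each $u\in I$,
\[
\Pi^u(dx,dy):=\int_I\t G(u,v)\,\pi^v(dx,dy)\,dv.
\]

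First I would verify that $\Pi^u$ is an admissible coupling of $[\t G\mu]^u$ and $[\t G\nu]^u$: its first marginal is $\int_I\t G(u,v)\mu^v(dx)\,dv=[\t G\mu]^u(dx)$ and its second marginal is $[\t G\nu]^u(dy)$, directly from \eqref{Lab}. Consequently, by the definition of $\cW_2$ as an infimum over couplings,
\[
\cW_2^2\big([\t G\mu]^u,[\t G\nu]^u\big)\le\int_{\RR^d\times\RR^d}|x-y|^2\,\Pi^u(dx,dy)=\int_I\t G(u,v)\,\cW_2^2(\mu^v,\nu^v)\,dv,
\]
which is the central pointwise-in-$u$ estimate from which both claims follow.

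For the second (supremum) bound I would simply note that $\int_I\t G(u,v)\,dv=1$, so the right-hand side above is a weighted average of $\cW_2^2(\mu^v,\nu^v)$ and is therefore at most $\sup_{v\in I}\cW_2^2(\mu^v,\nu^v)$; taking the supremum over $u$ gives the claim, and this step uses only the normalization and not Assumption~\ref{ass:graphon}. For the first (integrated) bound I would integrate the pointwise estimate over $u\in I$ and apply Fubini's theorem to get
\[
\int_I\cW_2^2\big([\t G\mu]^u,[\t G\nu]^u\big)\,du\le\int_I\Big(\int_I\t G(u,v)\,du\Big)\cW_2^2(\mu^v,\nu^v)\,dv .
\]
It then remains to bound the inner weight $\int_I\t G(u,v)\,du$ uniformly in $v$. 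Using $\t G(u,v)=G(u,v)\,\|G(u,\cdot)\|_1^{-1}$, the boundedness of $G$, the identity $\int_I G(u,v)\,du=\|G(\cdot,v)\|_1\le\|G\|_\infty$ (by symmetry of $G$ and $|I|=1$), together with $\|G(u,\cdot)\|_1^{-1}\le G^{-1}_\infty$ from Assumption~\ref{ass:graphon}, yields $\int_I\t G(u,v)\,du\le G^{-1}_\infty\|G\|_\infty=:C$, which completes the argument.

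The main technical point to be careful about is the measurable dependence $v\mapsto\pi^v$ of the optimal couplings, which is needed for the mixture $\Pi^u$ to be well defined as a measure (and for the Fubini exchange to apply). This is handled by a measurable-selection argument: the set-valued map assigning to $v$ its nonempty, closed set of optimal transport plans is measurable on the Polish space $\RR^d$, hence admits a measurable selector. Once this measurability is secured, the marginal identities and the Fubini step are routine, so I expect the selection issue to be the only delicate ingredient.
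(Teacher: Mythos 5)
Your proof is correct: the pointwise estimate $\cW_2^2([\t G\mu]^u,[\t G\nu]^u)\le\int_I\t G(u,v)\cW_2^2(\mu^v,\nu^v)\,dv$ via gluing a measurably selected family of optimal couplings, followed by the normalization $\int_I\t G(u,v)\,dv=1$ for the supremum bound and Fubini plus $\int_I\t G(u,v)\,du\le G^{-1}_\infty\|G\|_\infty$ for the integrated bound, is exactly the standard argument. The paper itself gives no proof for this lemma (it simply cites \cite[Lemma 3.1]{phamnonlinear}), and your write-up, including the remark that the only delicate point is the measurable dependence $v\mapsto\pi^v$, matches what that reference does.
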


We then turn to the proof of Lemma~\ref{lem:contra}. 
\begin{proof}[Proof of Lemma~\ref{lem:contra}]
	
	The proof follows standard estimation techniques for graphon mean field FBSDEs with the convexity arguments of cost functions. Compared to the classical mean field one, the difficulty lies in handling the graphon mean field parameter $\cG$ and estimating terms involving the partial derivative with respect to $\cG$. We use the same notations $\Phi^u$ representing  
	$(X^u_{t},\cG^u_{t},Y^u_{t},Z^u_{t},\alpha^u_{t})_{0 \leq t \leq T}$ and 
	$\theta^u$ representing  $(X^u_{t},\cG^u_{t},\alpha^u_{t})_{0 \leq t \leq T}$. First, by integration by parts, we have
	\begin{equation*}
		\begin{split}
			&\EE \bigl[ (X_T'^{,u}-X^u_T)\cdot Y^u_T \bigr] 
			\\
            &= \EE \bigl[ (\xi'^{,u}-\xi^u)\cdot Y^u_0 \bigr] 
			\\
			&\hspace{5pt}- \gamma \biggl\{  \EE \int_0^T
			\Big[ 
			\partial_xH^u(t,\Phi^u_{t}) \cdot (X_t'^{,u}-X^u_t)  
            \\
            &\qquad\qquad + \int_I\tilde{G}(v,u)\t\EE \bigl[ \partial_\mu H^v(t,\t \Phi^v_t)(X^u_t) \cdot (X_t'^{,u}-X^u_t)\bigr]dv  \Big] dt
			\\
			&\qquad\qquad- \EE \int_0^T 
			\Big[
			[b^u(t,\theta_t'^{,u})-b^u(t,\theta^u_t)] \cdot Y^u_{t}   + [\sigma^u(t,\theta_t'^{,u})-\sigma^u(t,\theta^u_t)]\cdot  Z^u_t \Big] dt \biggr\}
			\\
			&\hspace{5pt} - \biggl\{ 
			\EE \int_{0}^T \Big[ (X_{t}'^{,u}-X^u_{t}) \cdot \cI^{f,u}_{t} + (\cI^{b,u}_{t} - \cI'^{,b,u}_{t}) \cdot Y^u_{t} + (\cI^{\sigma,u}_{t}-\cI'^{,\sigma,u}_{t}) \cdot Z^u_{t} \Big] 
			dt \biggr\}
			\\
			&= \mathcal{J}^u_{0} - \gamma \mathcal{J}^u_{1} - \mathcal{J}^u_{2}.  
		\end{split}
	\end{equation*}
	By convexity Assumption~\ref{ass:d--3}, we have
	\begin{equation*}
		\begin{split}
			&\int_I\EE \bigl[(X_T'^{,u}-X^u_T)\cdot Y^u_T \bigr]du
			\\
            &= \gamma \int_I \EE \left[  \partial_xg^u(X^u_T,\cG^u_t)\cdot (\t X_T'^{,v}- \t X^v_T)+\int_I \tilde{G}(u,v)\t\EE[\partial_\mu g^u( X^u_T, \cG^u_T)(\t X^v_{T})] \cdot (\t X_T'^{,v}- \t X^v_T)dv \right]du 
            \\
            &\qquad + \int_I \EE \bigl[(\cI'^{,g,u}_T-\cI^{g,u}_T)\cdot Y^u_T \bigr]du
			\\
			&\leq \gamma \EE\bigl[g^u(X_T'^{,u},\cG'^{,u}_t)-g(X^u_T,\cG^u_T)\bigr]  + \int_I\EE \bigl[(\cI'^{,g,u}_T-\cI^{g,u}_T)\cdot Y^u_T \bigr]du.
		\end{split}
	\end{equation*}
	Similar convexity arguments apply to $f^u, u\in I$. Then similarly as the proof of Theorem~\ref{th:suffi}, we obtain
	\begin{equation*}
		\gamma J(\alpha') - \gamma J(\alpha) \geq 
		\gamma \lambda \int_I \EE \biggl[\int_{0}^T \vert \alpha^u_{t} - \alpha_{t}'^{,u} \vert^2 dt\biggr]du + \int_I
		\bigl(\cJ^u_{0} - \cJ^u_{2} + \EE \bigl[(\cI^{g,u}_T-\cI'^{,g,u}_T)\cdot Y^u_T \bigr]\bigr)du. 
	\end{equation*}
	Now, we reverse the roles of $\alpha$ and $\alpha'$ in the above equation and denote by $\cJ'^{,u}_{0}$ and $\cJ'^{,u}_{2}$ the 
	corresponding terms (defined similarly as $\cJ^{u}_{0}$ and $\cJ^{u}_{2}$ ). Summing both inequalities, we get
	\begin{align*}
		&2 \gamma \lambda \int_I\EE \bigl[\int_{0}^T \vert \alpha^u_{t} - \alpha_{t}'^{,u} \vert^2 dt\bigr]du
		\\
		&\quad +\int_I
		\bigl(\cJ^u_{0}+\cJ'^{,u}_0 - (\cJ^u_{2}+\cJ'^{,u}_{2}) + \EE \bigl[(\cI^{g,u}_T-\cI'^{,g,u}_T)\cdot (Y^u_T-Y'^{,u}_T) \bigr]\bigr)du \leq 0.  
	\end{align*}
	Then, by using Young's inequality, we have for some constant $C$ (the value of which may change from line to line) independent of $\gamma$, such that for any $\varepsilon >0$,
	\begin{equation}
		\label{eq:2:2:3_}
		\gamma \int_I\EE\biggl[ \int_{0}^T \vert \alpha^u_{t} - \alpha_{t}'^{,u} \vert^2 dt\biggr]du
		\leq 
		\varepsilon 
		\| \Phi - \Phi^{\prime} \|_{\mathcal S_1}^2 
		+ \frac{C}{\varepsilon} \biggl(\int_I \EE \bigl[ \vert \xi^u - \xi'^{,u} \vert^2 \bigr]du +
		\| {\mathcal I} - {\mathcal I}' \|_{\mathbb I_1}^2 \biggr).  
	\end{equation}

	From here, by standard estimate methods for BSDEs, using \cite[Lemma 3.1]{phamnonlinear} (see Lemma~\ref{lem:cg} in appendix), Cauchy-Schwarz inequality, and the Lipschitz property of involved functions, we have that, for each $u\in I$, there exists a $\delta>0$ that could be small enough and is independent of $u$ and a constant $C$, dependent of $\delta$, the uniform bound of graphon $G$, the uniform bound of $b^u_1(t),b^u_2(t),\sigma^u_1(t),$ and $\sigma^u_2(t)$ in time and label $(t,u)$, and the Lipschitz constants of $\partial_x f^u$ and $\partial_x g^u$, such that  
	\begin{equation}
		\label{eq:Bes}
		\begin{split}
			&\EE \biggl[ \sup_{0 \leq t \leq T} \vert Y^u_{t} - Y'^{,u}_{t} \vert^2 + \int_{0}^T \vert Z^u_{t} - Z'^{,u}_{t} \vert^2 dt \biggr]
			\\
			&\leq C \gamma \EE \biggl[
			\sup_{0 \leq t \leq T} \vert X^u_{t} - X'^{,u}_{t} \vert^2
			\\
            &\qquad+  \int_{0}^T   \vert \alpha^u_{t} - \alpha'^{,u}_{t} \vert^2  dt \biggr]+ 
			C \| {\mathcal I}^u - {\mathcal I}'^{,u} \|_{\mathbb I}^2 + C \gamma 
			\int_I  \EE \bigl[ \sup_{0 \leq t \leq T} \vert X^u_{t} - X_{t}'^{,u} \vert^2  \bigr]du \\
			&\qquad+ \gamma\EE \bigl|\int_I\tilde{G}(v,u)\t\EE[\partial_{\mu} g^v(\t X^v_T,\cG^v_T)(X^u_T)]dv-\int_I\tilde{G}(v,u)\t\EE[\partial_{\mu} g^v(\t X'^{,v}_T,\cG'^{,v}_T)(X'^{,u}_T)]dv\bigr|^2\\
			&\qquad 
			+\delta \gamma\int_0^T\EE\Bigl[\Bigl|\int_I \tilde{G}(v,u)\t\EE\bigl[\partial_\mu H^v(t,\t\Phi^v_t)(X^u_t)\bigr]dv-\int_I\tilde{G}(v,u)\t\EE\bigl[\partial_\mu H^v(t,\t\Phi'^{,v}_t)(X'^{,u}_t)\bigr]dv\Bigr|^2\Bigr]dt.
		\end{split}
	\end{equation}
	Denote the last two terms (without the coefficients $\gamma$ and $\delta \gamma$ respectively) in the above inequality by $\t{\mathcal{J}}^u_1$ and $\t{\mathcal{J}}^u_2$ respectively. Recall the notation $\Theta^u=X^{\vartheta^u}$ (see Section~\ref{sec:3.1}). 
	By Fubini's theorem, Assumptions~\ref{ass:d--1}--\ref{ass:d--2} and the duality property, we have
	\begin{align*}
		\int_I \t{\mathcal{J}}^u_2 du
		& =
		\int_I\t\EE\EE[|\partial_\mu H^u(t,\t\Phi^u_t)(\Theta^u_t)-\partial_\mu H^u(t,\t\Phi'^{,u}_t)(\Theta'^{,u}_t)|^2]du\\
		&\leq  c\EE\bigl[|X^u_t-X'^{,u}_t|^2+|\alpha^u_t-\alpha'^{,u}_t|^2+|\Theta^u_t-\Theta'^{,u}_t|^2\bigr]\\
		& \quad \quad +c^u_{b,2}\EE\bigl[|Y^u_t-Y'^{,u}_t|^2\bigr]+c^u_{\sigma,2}\EE\bigl[|Z^u_t-Z'^{,u}_t|^2\bigr],
	\end{align*}
	where $c^u_{b,2}$ and $c^u_{\sigma,2}$ are the uniform bounds in time for $b^u_2(t)$ and $\sigma^u_2(t)$ respectively.
	
	We apply the same arguments to analyze $\tilde\cJ^u_1$. It hence follows from the above analysis and \eqref{eq:Bes} that by taking $\delta$ small enough, for another constant $C$, independent of $\gamma$ and depending on $c,\delta$, the uniform bound of $b^u_1(t),b^u_2(t),\sigma^u_1(t),$ and $\sigma^u_2(t)$ in time and label $(t,u)$, and the Lipschitz constant of $\partial_x f^u$ and $\partial_x g^u$, such that   
	\begin{equation}
		\label{eq:Bes_2}
		\begin{split}
			&\int_I\EE \biggl[ \sup_{0 \leq t \leq T} \vert Y^u_{t} - Y'^{,u}_{t} \vert^2 + \int_{0}^T \vert Z^u_{t} - Z'^{,u}_{t} \vert^2 dt \biggr]du
			\\
			&\hspace{15pt} \leq C \gamma\int_I \EE \biggl[
			\sup_{0 \leq t \leq T} \vert X^u_{t} - X'^{,u}_{t} \vert^2
			+  \int_{0}^T   \vert \alpha^u_{t} - \alpha'^{,u}_{t} \vert^2  dt \biggr]du+ 
			C \| {\mathcal I} - {\mathcal I}' \|_{\mathbb I_1}^2.
		\end{split}
	\end{equation}

	Similarly, by standard estimates for graphon mean field type SDEs, we have for each $u\in I$, 
	\begin{align}
			\EE \bigl[ \sup_{0 \leq t \leq T} \vert X^u_{t} - X_{t}'^{,u} \vert^2  \bigr]
			& \leq C^u_1\EE \bigl[ \vert \xi^u - \xi'^{,u} \vert^2 \bigr] + C^u_1\gamma  \EE \int_{0}^T \vert \alpha^u_{t} - \alpha_{t}'^{,u} \vert^2 dt + C^u_1 
			\|   {\mathcal I} - {\mathcal I}'\|_{\mathbb I_1}^2
            \\
			& \quad\quad + C^u_1 \gamma  
			\int_I \int_0^T \EE \bigl[ \sup_{0 \leq s \leq t} \vert X^u_{s} - X_{s}'^{,u} \vert^2  \bigr]dtdu,
			\label{eq:Fes}
	\end{align}
    some constant $C^u_1$. 
	From here, by Gronwall's inequality, the uniform bound of $C^u_1, u\in I$ by Assumption~\ref{ass:d--1}, we have for some constant $C_1$,
	\begin{align}
			\int_I\EE \bigl[ \sup_{0 \leq t \leq T} \vert X^u_{t} - X_{t}'^{,u} \vert^2  \bigr]du
			& \leq \int_I\EE \bigl[ \vert \xi^u - \xi'^{,u} \vert^2 \bigr]du 
            \\
            &\qquad + C_1\gamma \int_I \EE \int_{0}^T \vert \alpha^u_{t} - \alpha_{t}'^{,u} \vert^2 dtdu  + C_1 \|   {\mathcal I} - {\mathcal I}'\|_{\mathbb I_1}^2.
			\label{eq:Fes_2}
	\end{align}
	Finally, by \eqref{eq:Bes_2}, \eqref{eq:Fes_2} and \eqref{eq:2:2:3_}, we deduce that for some new constant $C_2$, 
	\begin{equation}
		\begin{split}
			&\int_I \EE \Big[ \sup_{0 \leq t \leq T} \vert X^u_{t} - X_{t}'^{,u} \vert^2 + \sup_{0 \leq t \leq T} \vert Y^u_{t} - Y_{t}'^{,u} \vert^2 + \int_{0}^T \vert Z^u_{t} - Z_{t}'^{,u} \vert^2 dt\Big] du
			\\
			&\hspace{15pt} \leq (C_2\gamma+1)\int_I \EE \bigl[\int_{0}^T \vert \alpha^u_{t} - \alpha_{t}'^{,u} \vert^2 dt\bigr]du+ C_2 
			\bigl( \int_I\EE \bigl[ \vert \xi^u - \xi'^{,u} \vert^2 \bigr]du +
			\|   {\mathcal I} - {\mathcal I}'\|_{\mathbb I_1}^2 \bigr)
			\\
			&\hspace{15pt} \leq  C_2 \varepsilon 
			\| \Phi - \Phi^{\prime} \|_{\mathcal{S}_1}^2 
			+ \frac{C_2}{\varepsilon}\bigl( \EE \bigl[ \vert \xi - \xi' \vert^2 \bigr] +
			\| {\mathcal I} - {\mathcal I}' \|_{\mathbb I_1}^2 \bigr). 
		\end{split}
	\end{equation}
	We conclude using the Lipschitz property of $\alpha^u$ and $\alpha'^{,u}$, $u\in I$, and choosing $\varepsilon$ small enough.
\end{proof}

\section{Additional details for Section~\ref{sec:chaos}}

\begin{lemma}\label{lem:esempi}
	Suppose Assumptions~\ref{ass:iniepsilon} and~\ref{ass:e--1} hold. For all $i\in[N]$,
	$\EE[\cW^2_2(\o\cG^{N,i},\cG^{N,i})]\leq q_{N,d,\varkappa},$
	where $q_{N,d,\varkappa}\to 0$ as $N\to \infty$ is defined in~\eqref{eq:def_q}. %
\end{lemma}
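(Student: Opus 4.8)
The plan is to adapt the Fournier–Guillin quantization argument \cite{fournier2015rate} to the present \emph{weighted} and \emph{non-identically distributed} setting. Recall that $\bar\cG^{N,i}_t=\sum_{j=1}^N\kappa^{N,ij}\delta_{\bar X^j_t}$ and $\cG^{N,i}_t=\sum_{j=1}^N\kappa^{N,ij}\bar\mu^j_t$ with $\bar\mu^j_t=\cL(\bar X^j_t)$, and that the particles $(\bar X^j)_{j}$ are independent since they interact only through their laws. Two features distinguish this from the classical i.i.d.\ empirical-measure setting: the samples $\bar X^j_t$ carry non-uniform weights $\kappa^{N,ij}$ and are not identically distributed. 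First I would isolate the two structural ingredients that reduce the problem to the classical case up to constants: a uniform weight bound $\max_{1\le j\le N}\kappa^{N,ij}\le C/N$, and a uniform moment bound on the target law.

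For the weight bound, since $\zeta^N_{ij}=G_N(u,v)$ for $(u,v)\in\cI^N_i\times\cI^N_j$ and $\sum_{j'}\zeta^N_{ij'}=N\|G_N(u,\cdot)\|_1$, definition \eqref{eq:def-kappaN} gives $\kappa^{N,ij}=\zeta^N_{ij}/(N\|G_N(u,\cdot)\|_1)$. By Assumption~\ref{ass:e--1} each $G_N$ satisfies Assumption~\ref{ass:graphon}, so $\|G_N(u,\cdot)\|_1$ is bounded below and $G_N$ is bounded above uniformly, whence $\max_{i,j}\kappa^{N,ij}\le C/N$ with $C$ independent of $N$. For the moment bound, Assumption~\ref{ass:iniepsilon} together with the $\mathbf S^2$ estimates for the step-graphon FBSDE \eqref{eq:fbn} yields $\sup_{t}\sup_{j}\EE|\bar X^j_t|^{2+\varkappa}<\infty$, so that $\int_{\RR^d}|x|^{2+\varkappa}\cG^{N,i}_t(dx)=\sum_j\kappa^{N,ij}\EE|\bar X^j_t|^{2+\varkappa}\le M$ uniformly in $i,t,N$.

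With these in hand I would run the Fournier–Guillin dyadic decomposition. Writing $\mathcal P_n$ for the partition of $\RR^d$ into cubes of side $2^{-n}$, the FG inequality bounds $\cW_2^2(\bar\cG^{N,i}_t,\cG^{N,i}_t)$ by $C\sum_{n\ge0}2^{-2n}\sum_{F\in\mathcal P_n}|\bar\cG^{N,i}_t(F)-\cG^{N,i}_t(F)|$. The crucial step is the per-cube concentration estimate: since $\bar\cG^{N,i}_t(F)-\cG^{N,i}_t(F)=\sum_j\kappa^{N,ij}(\mathbf 1_{\{\bar X^j_t\in F\}}-\bar\mu^j_t(F))$ is a sum of independent centered terms, one has $\EE|\bar\cG^{N,i}_t(F)-\cG^{N,i}_t(F)|^2=\sum_j(\kappa^{N,ij})^2\mathrm{Var}(\mathbf 1_{\{\bar X^j_t\in F\}})\le(\max_j\kappa^{N,ij})\sum_j\kappa^{N,ij}\bar\mu^j_t(F)\le(C/N)\,\cG^{N,i}_t(F)$, and Jensen's inequality gives $\EE|\bar\cG^{N,i}_t(F)-\cG^{N,i}_t(F)|\le\min\big(2\cG^{N,i}_t(F),\sqrt{(C/N)\,\cG^{N,i}_t(F)}\big)$. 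This is exactly the estimate that drives the FG computation, with $C/N$ playing the role of $1/N$ and $\cG^{N,i}_t$ the role of the common sampling law; the remaining summation over scales $n$ (truncating at the optimal scale and using the moment bound to control the far cubes) is then identical to \cite{fournier2015rate} and produces precisely the rate $q_{N,d,\varkappa}$ of \eqref{eq:def_q}, with constants uniform in $i$, $t$, and $N$.

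The main obstacle, and the only place where the graphon structure genuinely enters, is the concentration estimate above: one cannot invoke \cite{fournier2015rate} as a black box because the samples are weighted and non-identically distributed. The reduction hinges on the two convexity facts $\sum_j(\kappa^{N,ij})^2\bar\mu^j_t(F)\le(\max_j\kappa^{N,ij})\cG^{N,i}_t(F)$ and $\sum_j\kappa^{N,ij}\EE|\bar X^j_t|^{2+\varkappa}\le\sup_j\EE|\bar X^j_t|^{2+\varkappa}$, both of which use $\sum_j\kappa^{N,ij}=1$ together with the uniform weight bound. A secondary point requiring care is checking that the moment control of Assumption~\ref{ass:iniepsilon}, stated for the continuum solution, transfers uniformly in $j$ and $t$ to the step-graphon particles $\bar X^j_t$; this holds because $G_N$ satisfies the same standing assumptions and the law of $\bar X^j_t$ coincides with that of the step-graphon continuum solution on $\cI^N_j$.
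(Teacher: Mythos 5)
Your argument is correct, but it takes a different route from the paper: the paper's proof is a one-line reduction, observing that Assumptions~\ref{ass:iniepsilon} and~\ref{ass:e--1} verify the hypotheses of \cite[Lemma 4.1]{phamnonlinear} and invoking that lemma as a black box, whereas you reconstruct the underlying Fournier--Guillin argument for weighted, non-identically distributed independent samples. The two key reductions you isolate --- the variance bound $\sum_j(\kappa^{N,ij})^2\bar\mu^j_t(F)\leq(\max_j\kappa^{N,ij})\,\cG^{N,i}_t(F)$ with $\max_j\kappa^{N,ij}\leq C/N$ playing the role of $1/N$, and the transfer of the $(2+\varkappa)$-moment bound to the mixture $\cG^{N,i}_t$ via $\sum_j\kappa^{N,ij}=1$ --- are exactly what makes the classical dyadic computation go through, and they are presumably the content of the cited lemma. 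What your version buys is self-containedness and transparency about where the graphon structure enters; what the paper's version buys is brevity. One point worth making explicit in your write-up: Assumption~\ref{ass:e--1} only states that each $G_N$ satisfies Assumption~\ref{ass:graphon}, so for the constant in $\kappa^{N,ij}\leq C/N$ (and hence in the final bound) to be uniform in $N$ --- which is needed for the asymptotic statements in Section~\ref{sec:chaos} --- one should note that $\sup_N\sup_u\|G_N(u,\cdot)\|_1^{-1}$ and $\sup_N\|G_N\|_\infty$ are finite; this is implicitly assumed by the paper as well and follows in practice from $\|G-G_N\|_1\to0$ together with the standing assumption on $G$, but it is not a literal consequence of \ref{ass:e--1} alone. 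Likewise, your transfer of the moment bound from the continuum solution $X^u$ to the step-graphon particles $\o X^j$ deserves the one extra sentence you give it (the $\o X^j$ solve the same type of equation with graphon $G_N$, so the same moment-propagation estimate applies uniformly).
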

\begin{proof}[Proof of Lemma~\ref{lem:esempi}]
Under Assumptions~\ref{ass:iniepsilon} and~\ref{ass:e--1}, it is readily seen that all conditions in \cite[Lemma 4.1]{phamnonlinear} are satisfied. Hence the desired result follows.
\end{proof}
\end{appendix}

\end{document}